\documentclass[a4paper,10pt,intlimits,sumlimits]{amsart}
\usepackage[utf8]{inputenc}


\usepackage{amsmath}
\usepackage{amsthm}
\usepackage{amstext}
\usepackage{amssymb}
\usepackage{amsfonts}
\usepackage{mathrsfs}
\usepackage{mathtools}
\usepackage{enumerate}

\usepackage{esint}

\usepackage{bbm} 
\usepackage{bm}

\usepackage{tikz}
\usepackage{pgfplots}
\usetikzlibrary{patterns,calc,decorations.pathreplacing}

\usepackage{float}
\usepackage{graphicx}
\usepackage{wrapfig}

\usepackage[pdftex,bookmarks,colorlinks,breaklinks]{hyperref}  
\definecolor{dullmagenta}{rgb}{0.4,0,0.4}   
\definecolor{darkblue}{rgb}{0,0,0.4}
\definecolor{darkgreen}{rgb}{0,0.4,0}
\hypersetup{linkcolor=darkblue,citecolor=blue,filecolor=dullmagenta,urlcolor=darkblue} 




 \newcommand{\mc}{\mathcal}
 \newcommand{\ms}[1]{\mathscr{#1}}
 \newcommand{\mb}[1]{\mathbf{#1}}
 \newcommand{\mf}[1]{\mathfrak{#1}}

 \newcommand{\map}[3]{#1 \colon #2 \rightarrow #3}

\newcommand{\R}{\mathbb{R}} 
\newcommand{\RR}{\R}  
\newcommand{\C}{\mathbb{C}}
\newcommand{\CC}{\mathbb{C}} 
\newcommand{\N}{\mathbb{N}}
\newcommand{\NN}{\N} 
\newcommand{\Z}{\mathbb{Z}}
\newcommand{\ZZ}{\Z} 


\newcommand{\XX}{\mathbb{X}}

\newcommand{\E}{\mathbb{E}} 

\newcommand{\EE}{\mathfrak{E}} 

\newcommand{\WW}{\mathbb{W}} 

\newcommand{\PP}{\mathbb{P}} 
\newcommand{\tPP}{3\PP} 
\newcommand{\pw}{\mathfrak{p}} 

\renewcommand{\AA}{\mathbb{A}} 
\newcommand{\BB}{\mathbb{B}} 

\newcommand{\MM}{\mathbb{M}} 

\newcommand{\DD}{\mathbb{D}} 
\newcommand{\TT}{\mathbb{T}} 


\newcommand{\Emb}{\mc{E}}
\newcommand{\RS}{\mathbb{S}} 

\newcommand{\DEF}{\mathfrak{d}} 
\newcommand{\vs}[1]{{#1}^{\uparrow}} 

\newcommand{\Bor}{\ms{B}} 
\newcommand{\Sch}{\ms{S}} 

\newcommand{\1}{\mathbbm{1}}

\newcommand{\loc}{\operatorname{loc}}

\newcommand{\LL}{\hbox{\raisebox{0.06em}-}\kern-0.45emL}
\newcommand{\sL}{\hbox{\raisebox{0.06em}-}\kern-0.45emL}

\newcommand{\dd}{\mathrm{d}}  

\newcommand{\FT}{\widehat}

\renewcommand{\phi}{\varphi}

\renewcommand{\bar}{\overline}
\renewcommand{\tilde}{\widetilde}

\newcommand{\sm}{\setminus}

\DeclareMathOperator{\pv}{p.v.}

\DeclareMathOperator{\spt}{spt}

\DeclareMathOperator{\ch}{ch}


\DeclareMathOperator{\BHT}{BHT}
\DeclareMathOperator{\BHF}{BHF}
\DeclareMathOperator{\Mod}{Mod}
\DeclareMathOperator{\Tr}{Tr}
\DeclareMathOperator{\Dil}{Dil}


\newtheorem{thm}{Theorem}
\newtheorem*{thm*}{Theorem}

\newtheorem{defn}[thm]{Definition}
\newtheorem*{defn*}{Definition}

\newtheorem{prop}[thm]{Proposition}
\newtheorem*{prop*}{Proposition}

\newtheorem{cor}[thm]{Corollary}
\newtheorem*{cor*}{Corollary}

\newtheorem{lem}[thm]{Lemma}
\newtheorem*{lem*}{Lemma}

\newtheorem{rmk}[thm]{Remark}
\newtheorem*{rmk*}{Remark}

\numberwithin{equation}{section}
\numberwithin{thm}{section}



\usepackage{todonotes}

\begin{document}
 		
\title[Banach-valued modulation invariant Carleson embeddings]{Banach-valued modulation invariant Carleson embeddings and outer-$L^p$ spaces: the Walsh case}
\date{\today}

\author[A. Amenta]{Alex Amenta}
\address{\noindent Mathematisches Institut \newline \indent Universit\"at Bonn, Bonn, Germany}
\email{amenta@math.uni-bonn.de}

\author[G. Uraltsev]{Gennady Uraltsev}
\address{\noindent Department of Mathematics \newline \indent Cornell University, Ithaca, NY, USA}
\email{guraltsev@math.cornell.edu}

\subjclass[2010]{Primary: 42B20; Secondary: 42B25; 47A56}
\keywords{Tritile operator, bilinear Hilbert transform, time-frequency analysis, Walsh group, UMD Banach spaces, outer Lebesgue spaces, interpolation spaces}


\begin{abstract}
  We prove modulation invariant embedding bounds from Bochner spaces $L^p(\WW;X)$ on the Walsh group to outer-$L^p$ spaces on the Walsh extended phase plane.
  The Banach space $X$ is assumed to be UMD and sufficiently close to a Hilbert space in an interpolative sense.
  Our embedding bounds imply $L^p$ bounds and sparse domination for the Banach-valued tritile operator, a discrete model of the Banach-valued bilinear Hilbert transform.
\end{abstract}


\maketitle

\section{Introduction}
\label{sec:intro}
The bilinear Hilbert transform (BHT) of two complex-valued Schwartz functions $f_0,f_1 \in \Sch(\RR;\CC)$ is given by 
\begin{equation*} 
  \BHT(f_0,f_1)(x) := \operatorname{p.v.} \int_\RR f_0(x-t)f_1(x+t) \, \frac{\dd t}{t}.
\end{equation*}
The $L^p$ bounds 
\begin{equation}
\label{eq:BHT-Lp-bounds}
\| \BHT(f_{0},f_{1}) \|_{L^{p}(\R)} \lesssim_{p_0,p_1} \| f_{0} \|_{L^{p_{0}}(\R)} \| f_{1} \|_{L^{p_{1}}(\R)} \qquad \forall f_0,f_1 \in \Sch(\RR;\CC),
\end{equation}
with $p_0,p_1 \in (1,\infty]$ and $p \in (2/3,\infty)$ such that $p^{-1}=p_{0}^{-1}+p_{1}^{-1}$, were first proven by Lacey and Thiele \cite{LT97,LT99}.
Their proof extended techniques developed by Carleson and Fefferman in their proofs of Carleson's theorem on the almost-everywhere convergence of Fourier series \cite{lC66,cF73}. These techniques are now referred to as `time-frequency' or `wave packet' analysis.
In order to streamline and modularise these techniques, Do and Thiele developed a theory of `outer-$L^p$' spaces, yielding proofs of $L^p$ bounds for the BHT in which the key difficulties are cleanly compartmentalised \cite{DT15}.

The outer-$L^p$ technique is not applied directly to the BHT, but rather to its associated trilinear form $\BHF$, given by dualising with a third function $f_2\in \Sch(\RR;\CC) $:
\begin{equation}\label{eqn:tri-form}
  \BHF(f_0,f_1,f_2) := \int_\RR \pv  \int_\RR f_0(x-t)f_1(x+t)f_2(x) \, \frac{\dd t}{t} \, \dd x.
\end{equation}
For $p_{0},p_{1},p\in[1,\infty]$, the estimate (\ref{eq:BHT-Lp-bounds}) is equivalent to the bound
\begin{equation}\label{eqn:tri-form-Lp-bound}
  |\BHF(f_0,f_1,f_2)| \lesssim_{p_0,p_1,p} \prod_{u=0}^{2} \| f_u \|_{L^{p_u}(\R)} \qquad \forall f_0, f_1, f_2 \in \Sch(\RR;\CC).
\end{equation}
The trilinear form $\BHF$ is a nontrivial linear combination of the Hölder form (which satisfies the desired $L^p$ bounds by Hölder's inequality) and another trilinear form:
\begin{equation}\label{eqn:trilinear-forms}
  \begin{aligned}
    & \int_\RR f_0(x)f_1(x)f_2(x) \, \dd x -  \frac{i}{\pi} \BHF(f_0,f_1,f_2) \\
    & =  \int_{\R^{3}_{+}} \Emb[f_{0}](x,\eta-t^{-1},t)\,\Emb[f_{1}](x,\eta+t^{-1},t)\,\Emb[f_{2}](x,-2\eta,t) \,  \dd x  \, \dd \eta \,  \dd t
    \\
    & =: \int_{\R^{3}_{+}} \Emb_{0}[f_{0}](x,\eta,t)\,\Emb_{1}[f_{1}](x,\eta,t)\,\Emb_{2}[f_{2}](x,\eta,t) \, \dd x \, \dd\eta  \, \dd t.
  \end{aligned}
\end{equation}
Here $\R^{3}_{+} = \RR \times \RR \times (0,\infty)$ is the extended phase plane, which parametrises the underlying translation, modulation, and dilation symmetries of $\BHF$.
The functions $\mc{E}[f_u]$ are representations of the functons $f_u \in \Sch(\RR;\CC)$ as functions $\R^{3}_{+} \to \CC$.
We refer to $\mc{E}$ as an \emph{embedding map}; the modified embedding maps $\mc{E}_u$ differ from $\mc{E}$ by a change of variables.\footnote{The precise definitions of $\mc{E}$ and $\mc{E}_u$ are not important for this introduction. \cite[(6.1)]{DT15} gives one possible definition.}

The outer-$L^p$ technique factorises $L^p$ bounds for the trilinear form  in \eqref{eqn:trilinear-forms} into a chain of inequalities:
\begin{align*}
    \bigg|\int_{\R^{3}_{+}} \Emb_0[f_{0}]\,\Emb_1[f_{1}]\,\Emb_2[f_{2}] \, \dd x \, \dd \eta \, \dd t\bigg|
    \lesssim \|  \Emb_0[f_{0}]\Emb_1[f_{1}]\Emb_2[f_{2}]\|_{L^{1}_{\nu}\sL^{1}_{\mu}S^{1}}
    \\
    \lesssim \prod_{u=0}^{2} \| \Emb_u[f_{u}] \|_{L^{p_{u}}_{\nu}\sL^{q_{u}}_{\mu}\RS_{u}} \lesssim\prod_{u=0}^{2} \| f_{u} \|_{L^{p_{u}}(\RR) }&.
\end{align*}
The first inequality is a Radon--Nikodym-style domination; the classical integral over $\R^{3}_{+}$ is controlled by an \emph{iterated outer-$L^1$} quasinorm $\|\cdot\|_{L^{1}_{\nu}\sL^{1}_{\mu}S^{1}}$.
This quasinorm is defined with respect to certain outer measures $\mu$ and $\nu$ on $\R^{3}_{+}$, along with a \emph{size} $S^1$ on functions on $\R^{3}_{+}$, which measures functions on distinguished subsets of $\R^{3}_{+}$.
The second inequality is a H\"older inequality for the iterated outer-$L^p$ quasinorms. This involves further sizes $\RS_{u}$, $u \in \{0,1,2\}$, which are connected to the size $S^1$ by a `size-H\"older' inequality. The first two inequalities follow from general properties of outer-$L^p$ spaces. The third inequality follows from the bounds
\begin{equation}\label{eqn:intro-emb}
  \|\Emb_u[f]\|_{L^{p_{u}}_{\nu}\sL^{q_{u}}_\mu \RS_{u}} \lesssim \|f\|_{L^{p_u}(\RR)} \qquad \forall f \in \Sch(\RR;\CC),
\end{equation}
which carry most of the difficulty of the problem.
These are \emph{modulation invariant Carleson embedding bounds}, so named because the operators $\mc{E}_u$ are modulation invariant in the sense that
\begin{equation*}
  \Emb_u[e^{2\pi i z \cdot \xi} f(z)](x,\eta,s) = \Emb_u[f](x,\eta+\xi,s),
\end{equation*}
and the outer-$L^p$ quasinorms $\|\cdot\|_{L_\nu^{p_u} \sL_\mu^{q_u} \RS_{u}}$ are invariant with respect to translation in the second variable.\footnote{These embeddings are also translation and dilation invariant, as are classical Carleson embeddings.} These bounds do not follow from general properties of outer-$L^p$ spaces, making them an interesting object of study in their own right. The abstract outer-$L^p$ theory offers one useful reduction in this direction: to prove the bounds \eqref{eqn:intro-emb}, it suffices to prove weak endpoint bounds and argue by an outer-$L^p$ version of the Marcinkiewicz interpolation theorem.

In this paper we consider functions $f \colon \RR \to X$ valued in a complex Banach space $X$.
Banach-valued analysis has a rich history which we do not attempt to summarise here; we simply point the reader to the recent volumes \cite{HNVW16,HNVW17}.
Embedding bounds from the Bochner space $L^p(\RR;X)$ into outer-$L^p$ spaces on the upper half-space $\RR \times (0,\infty)$ have been proven by Di Plinio and Ou \cite{DPO18}, with applications to Banach-valued multilinear singular integrals; the upper half-space parametrises translation and dilation symmetries, but not modulation symmetries.
We would like to prove such embedding bounds into outer-$L^p$ spaces on $\RR \times \RR \times (0,\infty)$, in order to incorporate modulation invariance.\footnote{Between the submission and acceptance of this paper, we successfully proved such embedding bounds \cite{AU19-2}, establishing $L^p$-bounds for UMD-valued bilinear Hilbert transforms. The bounds for bilinear Hilbert transforms, and more general multilinear Fourier multipliers, were simultaneously proven by Di Plinio, Li, Martikainen, and Vuorinen \cite{DPLMV19-3} using a different method.}
As a first step we prove these for a discrete model of the real line---the $3$-Walsh model---in which many technical difficulties in time-frequency analysis are removed, while the core features of the analysis remain.

In the $3$-Walsh model, the real line is replaced by the $3$-Walsh group
\begin{equation}\label{eq:3walsh}
  \WW = \WW_3 := \bigg\{ x \in \prod_{n \in \ZZ} \ZZ / 3\ZZ : \text{$[x]_n = 0$ for $n$ sufficiently large}\bigg\}, 
\end{equation}
where $[x]_n$ denotes the $n$-th component of $x$, and with group operation inherited from the infinite product. Up to measure zero, the Haar measure on $\WW$ can be identified with the Lebesgue measure on $[0,\infty)$, and the group operation on $\WW$ corresponds to ternary digitwise addition modulo $3$ (i.e. ternary digitwise addition without carry) on $[0,\infty)$. The dual group of $\WW$ can be identified with $\WW$ itself, and the Walsh--Fourier transform of the characteristic function of a triadic interval is again such a characteristic function (up to a suitable Walsh modulation). Thus one can construct `Walsh wave packets', supported in a given triadic interval of $[0,\infty)$, with frequency support in another given triadic interval. In this way, $\WW$ supports an idealised time-frequency analysis that is not possible on $\RR$, as a compactly supported function on $\RR$ cannot have compactly supported Fourier transform.

We work with the $3$-Walsh group, although the $2$-Walsh group (defined by replacing $3$ by $2$ in the definition) is more commonly used in the literature.
We have made this choice because the $3$-Walsh group leads to a more natural discrete model of $\BHF$ than the $2$-Walsh group.
Our arguments work equally well for any choice of integer parameter greater than or equal to $2$.

In our application of the $3$-Walsh model, the role of the extended phase plane $\R^{3}_{+}$ is taken by the set $\tPP$ of all \emph{tritiles}: i.e. the set of all rectangles $\mb{P} = I_\mb{P} \times \omega_\mb{P}$ of area $3$ in $[0,\infty) \times [0,\infty)$ (identified with $\WW \times \WW$) whose sides are triadic intervals. The tritile $\mb{P}$ roughly corresponds to the point $(x_{\mb{P}},\xi_{\mb{P}},|I_{\mb{P}}|)$, where $x_{\mb{P}}$ and $\xi_{\mb{P}}$ are the centres of $I_{\mb{P}}$ and $\omega_{\mb{P}}$ respectively. Each tritile is split into three \emph{tiles} $\mb{P}_u = I_{\mb{P}} \times \omega_{\mb{P}_u}$, $u \in \{0,1,2\}$---i.e. rectangles of area $1$ with triadic sides---all with the same time interval $I_{\mb{P}}$, and to each of these tiles is associated a Walsh wave packet $\map{w_{\mb{P}_u}}{\WW}{\CC}$, supported in $I_{\mb{P}}$ with frequency support in $\omega_{\mb{P}_v}$. The embedding $\mc{E}[f] \colon \tPP \to X^{3}$ of a function $f \colon \WW \to X$ is given by integrating $f$ against the three wave packets corresponding to a given tritile $\mb{P}$, and collecting the results in a triple
\begin{equation}\label{eq:embedding}
  \Emb[f](\mb{P}) := \big(   \langle f; w_{\mb{P}_u} \rangle \big)_{u\in\{0,1,2\}}.
\end{equation}
There are two equivalent ways of looking at this embedding: either as an $X^3$-valued function on tritiles, or as an $X$-valued function on tiles, where we write
\begin{equation*}
  \Emb[f](P) = \langle f; w_P \rangle = \Emb[f](\mb{P})_u,
\end{equation*}
where $\mb{P}$ is the unique tritile containing the tile $P$, and $u$ is the index such that $P = \mb{P}_u$.
Both viewpoints are handy, and we switch between them freely.

The main results of this paper are the following embedding bounds.
The Banach space assumptions (UMD, $r$-Hilbertian) are explained in Section \ref{sec:vvwpa}, and the relevant outer structures on $\tPP$ in Section \ref{sec:outermeasures}.
The sizes $\RS$ are also defined in Section \ref{sec:outermeasures}; they depend on the Banach space $X$ appearing in the statement of the theorem, although this is not apparent from the notation.

\begin{thm}\label{thm:intro-embeddings}
  Let $X$ be a Banach space which is UMD and $r$-Hilbertian for some $r \in [2,\infty)$.  Then for all convex sets $\AA \subset \tPP$ of tritiles, the following embedding bounds hold.
  \begin{itemize}
  \item For all $p \in (r,\infty)$,
    \begin{equation}\label{eqn:intro-emb-1} 
      \bigl\|\1_{\AA}\,\Emb[f] \bigr\|_{L_\mu^p \RS} \lesssim \|f \|_{L^p(\WW;X)} \qquad \forall f \in \Sch(\WW;X).
    \end{equation}
  \item For all $p \in (1,\infty)$ and all $q \in (\min(p,r)^\prime(r-1),\infty)$,
    \begin{equation}\label{eqn:intro-emb-2}
      \bigl\|\1_{\AA}\,\Emb[f]\bigr\|_{L_\nu^{p} \sL_\mu^q \RS} \lesssim \|f\|_{L^p(\WW;X)} \qquad \forall f \in \Sch(\WW;X).
    \end{equation}
  \end{itemize}
  The implicit constants in the above bounds do not depend on $\AA$.
\end{thm}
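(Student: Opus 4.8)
\emph{Overall strategy.} I would deduce both bounds from three weak-type endpoint estimates together with the outer-$L^p$ Marcinkiewicz interpolation theorem (the reduction alluded to after \eqref{eqn:intro-emb}). Writing, for $q > r$, $p_0 = p_0(q) := \tfrac{q}{q-r+1}$ so that $q = p_0'(r-1)$, the endpoint estimates are: (E1) the \emph{strong $L^\infty$ size bound} $\|\1_\AA\Emb[f]\|_{L^\infty_\mu\RS} \lesssim \|f\|_{L^\infty(\WW;X)}$; (E2) the \emph{weak-type $(r,r)$ bound} $\|\1_\AA\Emb[f]\|_{L^{r,\infty}_\mu\RS} \lesssim \|f\|_{L^r(\WW;X)}$; and (E3), for each $q > r$, the \emph{weak-type $(p_0,q)$ bound} $\|\1_\AA\Emb[f]\|_{L^{p_0,\infty}_\nu\sL^q_\mu\RS} \lesssim \|f\|_{L^{p_0}(\WW;X)}$. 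Interpolating (E1) and (E2) gives \eqref{eqn:intro-emb-1} for all $p \in (r,\infty)$. For \eqref{eqn:intro-emb-2} with $p \geq r$ (so $q > r$) one reads off the bound from \eqref{eqn:intro-emb-1} and the structural inequalities relating $L^p_\mu\RS$ and $L^p_\nu\sL^q_\mu\RS$ in the abstract outer-$L^p$ calculus; for $p < r$ (so $q > p'(r-1)$, i.e.\ $p > p_0(q)$) one interpolates (E3) against the already-established $p > r$ case. Since $\AA$ is convex, intersecting it with a tree or a strip again produces a tree, a strip, or a subset thereof, so the bounds are automatically uniform in $\AA$ and one may as well prove the endpoint estimates with $\AA = \tPP$.

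\emph{Proof of (E1).} I would fix a tree $T$ (a convex family of tritiles below a common top, as in Section~\ref{sec:outermeasures}) and estimate $\|\1_T\Emb[f]\|_{\RS(T)}$. Splitting the tritiles of $T$ according to whether their frequency interval contains the top frequency (the \emph{overlapping} tiles) or lies strictly above it (the \emph{lacunary} tiles), the overlapping contribution reduces to a single Walsh--Fourier localisation of $f$ over $I_T$ and is $\lesssim \fint_{I_T}\|f\|_X \leq \|f\|_\infty$. For the lacunary part, the wave packets attached to the lacunary tiles are, after a Walsh modulation, martingale differences of a triadic (Walsh--Paley) filtration, and the key input is a \textbf{vector-valued Littlewood--Paley inequality} in $L^r(\WW;X)$. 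This is where the Banach space hypotheses enter: UMD yields boundedness of the relevant Walsh--Paley projections and martingale transforms, which linearises the choice of wave packet and makes a square function available, while the $r$-Hilbertian hypothesis ($X$ an interpolation space between a Hilbert space and another Banach space) is what allows this square function to be run at the exponent $r$ rather than only at $2$. One concludes $\RS(\1_T\Emb[f])(T) \lesssim (\fint_{I_T}\|f\|_X^r)^{1/r} \leq \|f\|_\infty$, and (E1) follows by taking the supremum over all trees.

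\emph{Proof of (E2) and (E3).} For (E2) it suffices, by the definition of $\|\cdot\|_{L^{r,\infty}_\mu\RS}$, to bound $\sum_{T \in \mc{T}}|I_T| \lesssim \lambda^{-r}\|f\|_{L^r(\WW;X)}^r$ for any finite family $\mc{T}$ of pairwise \emph{strongly disjoint} trees on each of which $\RS(\1_\AA\Emb[f])$ exceeds $\lambda$. On each selected tree the size lower bound converts into $\lambda^r |I_T| \lesssim \int_{I_T}(\text{local square function})^r$; strong disjointness makes these local square functions essentially disjointly frequency-supported, so summing and invoking the $L^r(\WW;X)$-boundedness of the global Walsh--Paley square function (again UMD $+$ $r$-Hilbertian) gives the claim. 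For (E3) I would cover the superlevel set $\{\sL^q_\mu\RS(\1_\AA\Emb[f]) > \lambda\}$ by strips, using the feature special to the Walsh model that there are \emph{no tail terms}: if $I_\mb{P} \subseteq I_E$ then $w_{\mb{P}_u}$ is supported in $I_E$, so $\1_E\Emb[f] = \1_E\Emb[f\1_{I_E}]$ exactly. A Calder\'on--Zygmund decomposition $f = g + b$ at a height comparable to $\lambda$ then reduces matters to two pieces: the good part $g$ is handled by the $L^q$ embedding \eqref{eqn:intro-emb-1} (legitimate since $q > r$), while the bad part $b$, localised to the Calder\'on--Zygmund cubes with mean zero, is controlled using the cancellation of $b$ against the wave packets together with the fact that these cubes have total measure $\lesssim \lambda^{-p_0}\|f\|_{p_0}^{p_0}$; the exponent relation $q = p_0'(r-1)$ is exactly what makes the good-part estimate close.

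\emph{Main obstacle.} The genuine difficulty is concentrated in the two vector-valued quasi-orthogonality inputs used in (E1) and (E2): honest replacements, valid in $L^r(\WW;X)$ for UMD $r$-Hilbertian $X$, for the Bessel inequality and the Littlewood--Paley square function estimate, both of which are intrinsically Hilbertian and $L^2$-based. Establishing these (developed in Section~\ref{sec:vvwpa}) is where the full force of both Banach space hypotheses is needed and where a general UMD space would not suffice. By contrast the tree and strip combinatorics, the outer-$L^p$ interpolation, and the Calder\'on--Zygmund step are, in the Walsh model, essentially bookkeeping once the quasi-orthogonality estimates are in hand --- precisely because Walsh wave packets are perfectly localised in both space and frequency and hence generate no error terms, which is what makes the Walsh case a clean first step towards the real-line result.
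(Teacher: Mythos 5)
There are genuine gaps, concentrated in your endpoints (E2) and (E3). In (E3), the proposed control of the bad part ``using the cancellation of $b$ against the wave packets'' fails because the embedding is modulation invariant: for a tile $P$ with $I_P \supsetneq J$ one has $\langle b\1_J; w_P\rangle = |I_P|^{-1}\int_J b\,\exp_{\xi_P}$, a Walsh--Fourier coefficient of $b\1_J$ at the (possibly very high) frequency $\xi_P$, and this has no reason to vanish just because $\int_J b = 0$; mean-zero cancellation only kills the pairing with the few tiles whose character is constant on $J$. Once cancellation is unavailable, a single-height Calder\'on--Zygmund split cannot close the exponent: the bad part is controlled only in $L^{p}$ with $p<r$, so the tile count via the Walsh tile-type inequality (Proposition \ref{prop:tile-orthogonality}) is not applicable to it, and size-plus-support considerations only give the trivial bound $\lesssim\lambda$ on wave packet coefficients, not the required smallness $\tau$. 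The paper's Endpoint 3 in Theorem \ref{theorem:iterated-embeddings} instead uses a multi-level decomposition $f\1_{I_D}=\sum_k f_k$ along the level sets of $M_p(\|f\|_X)$ at heights $2^k\lambda$, with $\|f_k\|_\infty\le 2^k\lambda$ and $|\spt f_k|\lesssim 2^{-kp}|I_{n}|$, tile selection at levels $2^{-\varepsilon k}\tau$, and the key estimate $\|\Emb[f_k](\mb{P})\|_{X^3}\lesssim 2^{k(1-p)}\lambda$ outside the exceptional strips; it is exactly this interplay that produces the range $q>p'(r-1)$. Similarly, your (E2) runs a tree-selection/energy argument directly for the randomised size over ``strongly disjoint'' trees; the Bessel-type inequality for $\RS$ over such families (the analogue of the Hyt\"onen--Lacey--Parissis tree-energy lemma) is the crux of that route and is neither supplied by you nor contained in Section \ref{sec:vvwpa}, which only provides the randomized tree projection (Proposition \ref{prop:lacunary-tree-proj}) and tile-type $r$ for pairwise disjoint tiles.

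Structurally, the paper avoids both of these difficulties by a different architecture: it first proves the size domination $\|\1_\AA\Emb[f]\|_{\RS}\lesssim\|\1_\AA\|\Emb[f]\|_{X^3}\|_{S^\infty}$ (Theorem \ref{thm:size-domination}), where UMD and the convexity of $\AA$ enter through Lemma \ref{lem:convex-proj} and the defect estimate (Proposition \ref{prop:defect-size-domination}), and only then runs the embedding arguments for the deterministic size $S^\infty$, for which a simple tile selection (Proposition \ref{prop:tile-selection}) plus tile-type $r$ suffices. Two consequences for your write-up: first, the reduction ``one may as well take $\AA=\tPP$'' is not legitimate for $\RS$, since $\RS$ contains the defect term $\DEF(\1_\AA\Emb[f])$, which vanishes for $\Emb[f]$ but not after multiplication by $\1_\AA$, and the size's unconditionality only covers sets in $\TT^{\cup}$, not arbitrary convex $\AA$; convexity is precisely what bounds the boundary contribution. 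Second, the division of labour between the hypotheses is different from what you describe: UMD alone gives the randomized tree-projection (square-function) bound in every $L^p$, so the $r$-Hilbertian assumption is not what ``runs the square function at exponent $r$''; it is used only for the $\ell^r$-Bessel inequality across pairwise disjoint tiles, which drives the counting of selected tiles in both the non-iterated weak $(r,r)$ bound and Endpoint 3.
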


The set of exponents for which the embedding bounds \eqref{eqn:intro-emb-2} hold is sketched in Figure \ref{fig:intro-emb-exponents}; in the dotted region, the iterated embedding bounds basically correspond to the non-iterated bounds.
For $p \leq r$ we only have embeddings into iterated outer-$L^p$ spaces; such behaviour `outside local $L^r$' necessitated the introduction of iterated outer-$L^p$ spaces by the second author \cite{gU16}.


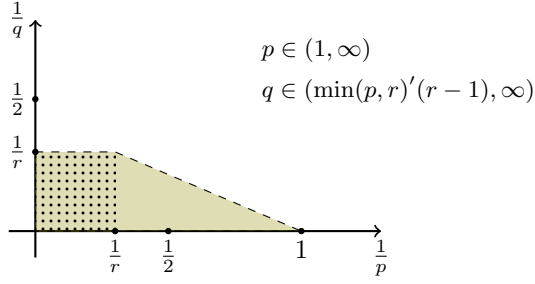
\begin{figure}
  \begin{tikzpicture}[scale=3.5]

    \def\xmax{1.3} \def\xmin{-0.1} \def\ymax{0.8} \def\ymin{-0.1}
    
    \def\r{0.3}  

    \draw [->, thick] (0, \ymin) -- coordinate (y axis mid) (0,\ymax);
    \draw [->, thick] (\xmin, 0) -- coordinate (x axis mid) (\xmax,0);
    \node [right] at (0.8,0.6) {\small $\begin{aligned} &p\in(1 , \infty)
        \\ & q\in(\min(p,r)^\prime(r-1),\infty)\end{aligned}$};
    
    \fill[color=olive,opacity=0.3] (\r,0) -- (\r,\r) --
    (1,0) -- cycle;

    \fill[color=olive,opacity=0.3] (0,0) -- (0,\r) -- (\r,\r) -- (\r,0) -- cycle;
    \fill[pattern=dots, pattern color=black] (0,0) -- (0,\r) -- (\r,\r) -- (\r,0) -- cycle;

    \draw[fill=black] (1,0) circle (0.01) node [below] {$1$};

    \draw[fill=black] (0.5,0) circle (0.01) node [below] {$\frac{1}{2}$};
    \draw[fill=black] (0,0.5) circle (0.01) node [left] {$\frac{1}{2}$};
    
    \draw[fill=black] (\r,0) circle (0.01) node [below] {$\frac{1}{r}$}; 
    \draw[fill=black] (0,\r) circle (0.01) node [left] {$\frac{1}{r}$};

    \node[below] at (1.3,0) {$\frac{1}{p}$};
    \node[left] at (0,\ymax) {$\frac{1}{q}$};




    \draw[black,thin,dashed] (0,\r) -- (\r,\r) -- (1,0) -- (0,0) -- cycle;


  \end{tikzpicture}
  \caption{Exponents $(p,q)$ for which \eqref{eqn:intro-emb-2} holds.} 
  \label{fig:intro-emb-exponents}
\end{figure}

Let us now discuss Banach-valued versions of the trilinear form $\BHF$.
Consider a triple of Banach spaces $(X_0,X_1,X_2)$ and a bounded trilinear form
\begin{equation}\label{eqn:intro-tri-form}
  \Pi \colon X_0 \times X_1 \times X_2 \to \CC.
\end{equation}
With respect to this data we define
\begin{equation*}
  \BHF_\Pi(f_0,f_1,f_2) :=\pv \int_\RR \int_\RR \Pi(f_0(x-t),f_1(x+t),f_2(x)) \, \frac{\dd t}{t} \, \dd x
\end{equation*}
for $f_u \in \Sch(\RR;X_u)$, $u \in \{0,1,2\}$. The first $L^p$-bounds for $\BHF_\Pi$ were proven by Silva, in the case $X_0 = \ell^R$, $X_1 = \ell^\infty$, $X_2 = \ell^{R^\prime}$, for $R \in (4/3,4)$, with $\Pi$ the natural product-sum map \cite[Theorem 1.7]{pS14}.
The set of allowed Banach spaces was extended by Benea and Muscalu using a new `helicoidal method' \cite{BM16,BM17}, and by Lorist and Nieraeth by Rubio de Francia-type extrapolation methods \cite{LN19,bN19} relying on weighted estimates for $\BHF$ as proven for example in \cite{CUM18,CDO18}.
One limitation of these results is that they only hold when the spaces $X_0,X_1,X_2$ are Banach lattices, excluding interesting examples such as the Schatten classes $\mc{C}^p$ and more general non-commutative $L^p$ spaces.
It remains an open question as to whether there are any $L^p$-bounds for $\BHF_\Pi$ without this limitation.

As a corollary of Theorem \ref{thm:intro-embeddings}, we prove $L^p$-bounds for the $3$-Walsh model of $\BHF_\Pi$ without assuming any lattice structure. This model is the \emph{tritile form} $\Lambda_\Pi$, defined by
\begin{equation}\label{eqn:tritile-form}
  \begin{aligned}
    \Lambda_\Pi(f_0,f_1,f_2) &:= \sum_{\mb{P} \in \tPP} \Pi\Big(\langle f_0; w_{\mb{P}_0} \rangle, \langle f_1; w_{\mb{P}_1} \rangle, \langle f_2; w_{\mb{P}_2} \rangle \Big) |I_{\mb{P}}| \\
    &= \sum_{\mb{P} \in \tPP} \Pi\Big(\Emb[f_0](\mb{P}_0), \Emb[f_1](\mb{P}_1), \Emb[f_2](\mb{P}_2) \Big) |I_{\mb{P}}|
\end{aligned}
\end{equation}
for $f_u \in \Sch(\WW;X_u)$, $u \in \{0,1,2\}$.\footnote{It is not obvious that the sum on the right hand side converges absolutely; see \cite[Lemma 5.1]{HLP13} for a proof of this convergence for the quartile form, which will be discussed later in the introduction. Of course, the absolute convergence follows from our theorem.}

\begin{thm}\label{thm:intro-tritile-boundedness}
    Let $(X_u)_{u\in\{0,1,2\}}$ be UMD Banach spaces, such that each $X_u$ is $r_u$-Hilbertian for some $r_u \in [2,\infty)$, and let $\Pi \colon X_0 \times X_1 \times X_2 \to \CC$ be a bounded trilinear form. Given any  H\"older triple of exponents  $(p_u)_{u \in \{0,1,2\}}\in(1,\infty)^{3}$  satisfying
  \begin{equation}\label{eqn:condn}
    \sum_{u=0}^{2} \frac{1}{\min(p_u,r_u)^\prime(r_u-1)} > 1,
  \end{equation}
  we have the bound
  \begin{equation*}
    |\Lambda_\Pi(f_0,f_1,f_2)| \lesssim \prod_{u=0}^{2} \|f_u\|_{L^{p_u}(\WW;X_u)} \qquad \forall f_u \in \Sch(\WW;X_u).
  \end{equation*}
\end{thm}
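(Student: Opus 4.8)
The plan is to deduce the theorem from Theorem~\ref{thm:intro-embeddings} by the standard outer-$L^p$ argument: dominate the tritile sum by an iterated outer-$L^1$ quasinorm, split it by an outer H\"older inequality, and estimate each resulting factor by an embedding bound. \textbf{Reduction to finite convex sets.} Since the sum defining $\Lambda_\Pi$ is not a priori absolutely convergent, I would first fix a finite convex set $\AA \subset \tPP$ and prove
\begin{equation*}
  \sum_{\mb{P}\in\AA}\bigl|\Pi\bigl(\Emb[f_0](\mb{P}_0),\Emb[f_1](\mb{P}_1),\Emb[f_2](\mb{P}_2)\bigr)\bigr|\,|I_{\mb{P}}| \lesssim \prod_{u=0}^{2}\|f_u\|_{L^{p_u}(\WW;X_u)},
\end{equation*}
with implicit constant independent of $\AA$. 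As $\tPP$ is an increasing union of finite convex sets of tritiles, monotone convergence then yields absolute convergence of the full sum together with the claimed bound (in particular settling the convergence issue noted in the footnote to \eqref{eqn:tritile-form}).

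\textbf{Scalarisation, domination, and outer H\"older.} Boundedness of $\Pi$ gives the pointwise estimate
\begin{equation*}
  \bigl|\Pi\bigl(\Emb[f_0](\mb{P}_0),\Emb[f_1](\mb{P}_1),\Emb[f_2](\mb{P}_2)\bigr)\bigr| \le \|\Pi\|\prod_{u=0}^{2}\bigl\|\Emb[f_u](\mb{P}_u)\bigr\|_{X_u},
\end{equation*}
so it suffices to bound $\sum_{\mb{P}\in\AA}\prod_{u}\|\Emb[f_u](\mb{P}_u)\|_{X_u}\,|I_{\mb{P}}|$. By the Radon--Nikodym-type domination for the outer measures $\mu,\nu$ on $\tPP$ (the Walsh analogue of the first inequality displayed in the introduction), this sum is controlled by the iterated outer-$L^1$ quasinorm, with respect to the size $S^1$, of the pointwise product of the scalar functions $\mb{P}\mapsto\|\Emb[f_u](\mb{P}_u)\|_{X_u}$. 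Applying the size-H\"older inequality $S^1\lesssim\prod_{u}\RS$ (with $\RS$ taken for $X=X_u$ in the $u$-th factor) and then the outer H\"older inequality for iterated outer-$L^p$ spaces, we obtain, for any exponents with $\sum_{u} p_u^{-1}=\sum_{u} q_u^{-1}=1$,
\begin{equation*}
  \sum_{\mb{P}\in\AA}\prod_{u=0}^{2}\bigl\|\Emb[f_u](\mb{P}_u)\bigr\|_{X_u}\,|I_{\mb{P}}| \lesssim \prod_{u=0}^{2}\bigl\|\1_\AA\,\Emb[f_u]\bigr\|_{L^{p_u}_\nu\sL^{q_u}_\mu\RS}.
\end{equation*}

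\textbf{Choice of exponents and conclusion.} The exponents $(p_u)$ are already a H\"older triple. Condition \eqref{eqn:condn} says exactly that $\sum_{u}\bigl(\min(p_u,r_u)^\prime(r_u-1)\bigr)^{-1}>1$, so one can choose $q_u\in\bigl(\min(p_u,r_u)^\prime(r_u-1),\infty\bigr)$ with $\sum_{u} q_u^{-1}=1$. For this choice each factor lies in the range of validity of \eqref{eqn:intro-emb-2} applied with $X=X_u$, hence $\|\1_\AA\,\Emb[f_u]\|_{L^{p_u}_\nu\sL^{q_u}_\mu\RS}\lesssim\|f_u\|_{L^{p_u}(\WW;X_u)}$ with constant independent of $\AA$. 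Chaining these inequalities and letting $\AA\uparrow\tPP$ finishes the proof.

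\textbf{Main obstacle.} The analytic weight of the argument sits in Theorem~\ref{thm:intro-embeddings}, which we are assuming; what remains is organisational. I expect the most delicate point to be the size-H\"older inequality $S^1\lesssim\prod_{u}\RS$ in this vector-valued situation, where the trilinearity and boundedness of $\Pi$ on $X_0\times X_1\times X_2$ must be reconciled with sizes built from the individual spaces $X_u$, and where both the iterated ($\nu$-then-$\mu$) structure and the genuinely discrete notion of integration over $\tPP$ in the Walsh model have to be accommodated. Once that is in place, verifying that \eqref{eqn:condn} is precisely the arithmetic condition permitting an admissible choice of the $q_u$ is immediate.
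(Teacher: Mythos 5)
Your overall architecture---Radon--Nikodym domination into an iterated outer-$L^1$ quantity, a size-H\"older plus outer H\"older inequality, then the embedding bounds \eqref{eqn:intro-emb-2} with a H\"older triple $(q_u)$ whose existence is exactly what \eqref{eqn:condn} provides---is the paper's route (Remark \ref{rmk:tritile-reduction} together with Corollary \ref{cor:rand-holder}), and your exponent arithmetic and the uniformity in $\AA$ are fine. The genuine gap is the scalarisation step. You discard the trilinear form at the outset by bounding $|\Pi(\cdot,\cdot,\cdot)|\le\|\Pi\|\prod_u\|\Emb[f_u](\mb{P}_u)\|_{X_u}$, and then invoke ``the size-H\"older inequality $S^1\lesssim\prod_u\RS$'' for the pointwise product of the scalar functions $\mb{P}\mapsto\|\Emb[f_u](\mb{P}_u)\|_{X_u}$. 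That is not the statement of Proposition \ref{prop:new-size-holder}: the proposition bounds $\|\Pi^*(F_0,F_1,F_2)\|_{S^1(T)}$, with the trilinear form still present, by $\prod_u\|F_u\|_{\RS(T)}$. The scalarised version you need is false in general: the randomised sizes only control Rademacher averages, and already for a single lacunary chain it would require an inequality of the shape $\sum_k\|y_k\|_{X_1}\|z_k\|_{X_2}\lesssim \E\bigl\|\sum_k\varepsilon_k y_k\bigr\|_{X_1}\,\E\bigl\|\sum_k\varepsilon_k z_k\bigr\|_{X_2}$, i.e.\ a cotype-$2$-type reverse Khintchine--Maurey bound. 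Taking $X_1=X_2=\ell^r$ with $r>2$ (UMD and $r$-Hilbertian, hence squarely within scope) and $y_k=z_k=e_k$ gives left-hand side $N$ against right-hand side $N^{2/r}$, so no such inequality can hold with constants independent of the number of scales.

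The fix is to keep $\Pi$ intact, which is precisely why the paper introduces the extended form $\Pi^*$ on $X_0^3\times X_1^3\times X_2^3$: one applies Radon--Nikodym domination to the scalar function $\mb{P}\mapsto|\Pi^*(F_0(\mb{P}),F_1(\mb{P}),F_2(\mb{P}))|$, and in the proof of Proposition \ref{prop:new-size-holder} the boundedness of $\Pi$ is used only after a diagonalisation with independent Rademacher variables, so that $\Pi$ acts on the randomised sums $\sum_{\mb{P}}\varepsilon_{\mb{P}}F_u(\mb{P}_u)\1_{I_{\mb{P}}}(x)$ (with the overlapping factor handled via the defect decomposition of Proposition \ref{prop:func-reconstruction} and the $S^{(\infty,1)}$-component of $\RS$). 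With Corollary \ref{cor:rand-holder} in place, your choice of $(q_u)$ and the application of \eqref{eqn:intro-emb-2} finish the proof exactly as in the paper; also, your reduction to finite convex sets is harmless but unnecessary, since Corollary \ref{cor:rand-holder} bounds the sum of absolute values directly and thus already settles the convergence issue.
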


The region of exponents $(p_u)_{u=0}^2$ for which this theorem holds (more precisely, the region of their reciprocals) is characterised as the interior of a polygon in Section \ref{sec:Lpbounds}.
This region is only nonempty when the Hilbertian exponents $(r_u)_{u=0}^2$ are jointly sufficiently close to $2$, in the sense that
\begin{equation*}
  \sum_{u=0}^2 \frac{1}{r_u} > 1.
\end{equation*}

$L^p$ bounds for the Banach-valued quartile form (the $2$-Walsh analogue of $\Lambda_\Pi$) were first established by Hytönen, Lacey, and Parissis \cite{HLP13}. Their assumptions on the Banach spaces $X_u$ are very similar to ours---possibly equivalent, although this is not known---and the resulting range of exponents in their $L^p$ bounds are the same as ours when restricted to the reflexive range (see Section \ref{sec:Lpbounds}).\footnote{It was pointed out to us by the anonymous referee that the sparse domination obtained in Theorem \ref{thm:intro-sparse-bounds} implies estimates outside the reflexive range, and fully recovers the range of estimates obtained in \cite{HLP13}.}
Banach-valued time-frequency analysis was initiated by Hytönen and Lacey in their work on the Carleson operator, and continued with their work with Parissis on the Walsh model of the variational Carleson operator \cite{HL13, HL18, HLP14}.
We have taken substantial inspiration from these papers.

The iterated embeddings of Theorem \ref{thm:intro-embeddings} imply not only $L^p$ bounds for the tritile form, but also sparse domination.
The connection between sparse domination and Carleson embeddings into iterated outer-$L^p$ spaces was first shown by Di Plinio, Do, and the second author \cite{DPDU18}.
A collection of intervals $\mc{G}$ in $\WW$ is \emph{sparse} if 
\begin{equation*}
  \| \mc{G} \|_{sp}:= \sup_{I\subset \WW} \frac{1}{|I|}\sum_{\substack{J \in \mc{G} \\ J \subset I}} |J| < \infty
\end{equation*}
where the supremum is taken over all intervals $I \subset \WW$ (see \cite[\textsection 6]{LN18} or \cite{TH18} for a proof that this is equivalent to the more familiar definition of a sparse collection).

\begin{thm}\label{thm:intro-sparse-bounds}
  Let $(X_u)_{u \in \{0,1,2\}}$, $(r_u)_{u \in \{0,1,2\}}$, and $\Pi$ be as in Theorem \ref{thm:intro-tritile-boundedness}. Let $(p_{u})_{u\in{0,1,2}}$ be any triple of exponents satisfying \eqref{eqn:condn}.  Then
  \begin{equation*}
    |\Lambda_\Pi(f_0,f_1,f_2)| \lesssim \sup_{\| \mc{G} \|_{sp}\leq 1} \sum_{I \in \mc{G}} |I| \prod_{u=0}^2 \bigg( \fint_I \|f_u\|_{X_u}^{p_u} \bigg)^{1/p_u} \qquad \forall f_u \in \Sch(\WW;X_u)
  \end{equation*}
  where the supremum is taken over all sparse collections of intervals $\mc{G}$.
\end{thm}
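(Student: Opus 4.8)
The plan is to deduce the sparse bound from the iterated embedding \eqref{eqn:intro-emb-2} by the mechanism of \cite{DPDU18}: a Calder\'on--Zygmund stopping-time decomposition of the set of tritiles produces the sparse collection, while on each ``stopped'' piece the tritile form is controlled by \eqref{eqn:intro-emb-2} together with the Radon--Nikodym domination, size-H\"older, and outer-H\"older inequalities recalled in the introduction. First I would fix exponents $q_u\in\bigl(\min(p_u,r_u)^\prime(r_u-1),\infty\bigr)$ with $\sum_u q_u^{-1}=1$; this choice is possible precisely because $(p_u)$ satisfies \eqref{eqn:condn}. With $\sum_u p_u^{-1}=1$ (the H\"older relation) and $\sum_u q_u^{-1}=1$ in hand, the chain of inequalities sketched in the introduction --- Radon--Nikodym domination to $L^1_\nu\sL^1_\mu S^1$, size-H\"older (using boundedness of $\Pi$), and outer-H\"older --- gives, for every convex set $\AA\subset\tPP$ of tritiles,
\[
  \Bigl|\sum_{\mb{P}\in\AA}\Pi\bigl(\Emb[f_0](\mb{P}_0),\Emb[f_1](\mb{P}_1),\Emb[f_2](\mb{P}_2)\bigr)\,|I_{\mb{P}}|\Bigr|
  \lesssim \prod_{u=0}^{2}\bigl\|\1_{\AA}\,\Emb[f_u]\bigr\|_{L_\nu^{p_u}\sL_\mu^{q_u}\RS}.
\]

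Next I would run the stopping time. For a triadic interval $L\subseteq\WW$ set $a_u(L):=\bigl(\fint_L\|f_u\|_{X_u}^{p_u}\bigr)^{1/p_u}$. Starting from a finite collection $\mc{S}_0$ of triadic intervals large enough to contain $\bigcup_u\spt f_u$ (legitimate for $f_u\in\Sch(\WW;X_u)$; tritiles with very large time interval contribute nothing after truncation, cf.\ the footnote following \eqref{eqn:tritile-form}, and one passes to the limit from finite subsets of $\tPP$), define recursively $\ch(L)$ to be the collection of maximal triadic intervals $I\subsetneq L$ with $a_u(I)>C\,a_u(L)$ for some $u$, where $C$ is a large constant, and put $\mc{S}_{j+1}:=\bigcup_{L\in\mc{S}_j}\ch(L)$ and $\mc{G}:=\bigcup_j\mc{S}_j$. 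For $L\in\mc{G}$ let $\tPP_L$ be the set of tritiles $\mb{P}$ with $I_{\mb{P}}\subseteq L$ and $I_{\mb{P}}\not\subseteq I$ for all $I\in\ch(L)$. A short check using the tritile order shows each $\tPP_L$ is convex, and the $\{\tPP_L\}_{L\in\mc{G}}$ partition $\tPP$, since each $I_{\mb{P}}$ is contained in only finitely many elements of $\mc{G}$ and these form a chain, hence have a unique minimal element.

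On $\tPP_L$ every wave packet $w_{\mb{P}_v}$ is supported in $I_{\mb{P}}\subseteq L$, so $\1_{\tPP_L}\,\Emb[f_u]=\1_{\tPP_L}\,\Emb[f_u\1_L]$, with $f_u\1_L\in\Sch(\WW;X_u)$. Applying the displayed inequality with $\AA=\tPP_L$ and then \eqref{eqn:intro-emb-2} to each $f_u\1_L$,
\[
  \Bigl|\sum_{\mb{P}\in\tPP_L}\Pi\bigl(\Emb[f_0](\mb{P}_0),\Emb[f_1](\mb{P}_1),\Emb[f_2](\mb{P}_2)\bigr)\,|I_{\mb{P}}|\Bigr|
  \lesssim \prod_{u=0}^{2}\|f_u\1_L\|_{L^{p_u}(\WW;X_u)}
  = |L|\prod_{u=0}^{2}a_u(L),
\]
using $\sum_u p_u^{-1}=1$ in the last step. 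Summing over $L\in\mc{G}$ and using that the $\tPP_L$ partition $\tPP$ yields $|\Lambda_\Pi(f_0,f_1,f_2)|\lesssim\sum_{L\in\mc{G}}|L|\prod_u a_u(L)$. Finally, $\mc{G}$ is sparse: for each $L$ and each $u$, Chebyshev applied to the triadic maximal function of $\|f_u\|_{X_u}^{p_u}$ gives $\sum\{|I|:I\subsetneq L\text{ maximal},\ a_u(I)>C a_u(L)\}\le C^{-p_u}|L|$, hence $\sum_{I\in\ch(L)}|I|\le\bigl(\sum_u C^{-p_u}\bigr)|L|=:\theta|L|$ with $\theta<1$ once $C$ is large (possible since each $p_u>1$); a standard telescoping over generations then gives $\sum_{J\in\mc{G},\,J\subseteq I}|J|\le(1-\theta)^{-1}|I|$ for every interval $I$, i.e.\ $\|\mc{G}\|_{sp}\le(1-\theta)^{-1}$, which is absorbed into the supremum after splitting $\mc{G}$ into $O(1)$ subcollections of sparsity constant at most $1$.

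The crux, and the only step carrying real content, is the per-level estimate: it rests entirely on the \emph{uniformity in $\AA$} of the embedding bound \eqref{eqn:intro-emb-2} (Theorem \ref{thm:intro-embeddings}) and on the fact --- equivalent to hypothesis \eqref{eqn:condn} --- that finite exponents $q_u$ above the thresholds $\min(p_u,r_u)^\prime(r_u-1)$ can be chosen with $\sum_u q_u^{-1}=1$, so that the size-H\"older/outer-H\"older chain closes into $L^1_\nu\sL^1_\mu S^1$. The remaining ingredients --- convexity and the partition property of the $\tPP_L$, the treatment of tritiles with very large time interval for Schwartz $f_u$, and the Calder\'on--Zygmund packing for sparsity --- are routine and essentially identical to \cite{DPDU18}.
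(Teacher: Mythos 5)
Your argument has a genuine gap: it silently adds the hypothesis $\sum_{u}p_u^{-1}=1$, which is not part of Theorem \ref{thm:intro-sparse-bounds}. The theorem assumes only \eqref{eqn:condn}; the triple $(p_u)$ need not be a H\"older triple, and the case of real interest is $\sum_u p_u^{-1}>1$ (this is exactly the room needed to deduce $L^{\bar p_u}$ bounds with H\"older $\bar p_u>p_u$ from the sparse form, and it is what the footnote about recovering the full range of \cite{HLP13} outside the reflexive setting refers to). You use the H\"older relation for $(p_u)$ twice: the inequality
$\bigl|\sum_{\mb{P}\in\AA}\Pi\bigl(\Emb[f_0](\mb{P}_0),\Emb[f_1](\mb{P}_1),\Emb[f_2](\mb{P}_2)\bigr)\,|I_{\mb{P}}|\bigr|\lesssim\prod_{u}\bigl\|\1_{\AA}\Emb[f_u]\bigr\|_{L^{p_u}_\nu\sL^{q_u}_\mu\RS}$
is Corollary \ref{cor:rand-holder}, which requires \emph{both} $(p_u)$ and $(q_u)$ to be H\"older triples; and the final identity $\prod_u\|f_u\1_L\|_{L^{p_u}}=|L|\prod_u a_u(L)$ again needs $\sum_u p_u^{-1}=1$. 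Nor can you reduce the general case to the H\"older case afterwards: since $t\mapsto(\fint_I\|f\|_{X}^{t})^{1/t}$ is nondecreasing, you would need a H\"older triple $\tilde p_u\le p_u$ still satisfying \eqref{eqn:condn}, and when $\sum_u p_u^{-1}>1$ there is no componentwise smaller H\"older triple at all.

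The paper's proof is organised precisely to decouple the two roles of the exponents. The outer H\"older inequality is applied only in the inner variable, with the H\"older triple $(q_u)$; the exponents $p_u$ enter solely through a Chebyshev-type selection built into the definition of the iterated quasinorm: for each strip $D$ one finds $K_D\subset D$, a union of strips with $\nu(K_D)\le\nu(D)/2$, such that $\|\1_{D\setminus K_D}\Emb[f_u]\|_{\sL^{q_u}_\mu\RS}\lesssim\nu(D)^{-1/p_u}\|\1_{D}\Emb[f_u]\|_{L^{p_u}_\nu\sL^{q_u}_\mu\RS}$ for every $u$, and this works for arbitrary $p_u\in[1,\infty)$. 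Iterating this produces the sparse collection (sparsity comes from the halving of $\nu$, not from a stopping time on averages of $f_u$), and only at the end is Theorem \ref{thm:intro-embeddings} applied to $f_u\1_I$ to convert $|I|^{-1/p_u}\|\1_{D(I)}\Emb[f_u]\|_{L^{p_u}_\nu\sL^{q_u}_\mu\RS}$ into $\|f_u\|_{\sL^{p_u}(I;X_u)}$. Your auxiliary steps (convexity of the pieces $\tPP_L$, the partition of $\tPP$, the packing estimate for sparsity) are fine, and in the special case that $(p_u)$ is a H\"older triple your Calder\'on--Zygmund stopping time does yield the stated bound by a legitimately different route; but as written the proposal does not prove the theorem in the generality claimed.
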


The term appearing on the right of the bound of Theorem \ref{thm:intro-sparse-bounds} is referred to as a sparse form. It is straightforward to show that sparse forms satisfy the bounds
\begin{equation*}
  \sum_{I\in\mc{G}} |I| \prod_{u=0}^{2} \Bigl( \fint_{I}|f_{u}|^{p_{u}} \Bigr)^{1/p_{u}} \lesssim \| \mc{G} \|_{sp} \prod_{u=0}^{2}\| f_{u} \|_{L^{\bar{p}_{u}}} 
\end{equation*}
for any Hölder triple of exponents $(\bar{p}_{u})_{u\in\{0,1,2\}}$ with $\bar{p}_{u}>p_{u}$.
Furthermore, sparse forms satisfy various weighted bounds, which we do not pursue here; for more information see for example \cite{mL19, LN18, bN19}.

Let us return to the assumptions of Theorem \ref{thm:intro-tritile-boundedness}: we consider three UMD Banach spaces $(X_u)_{u = 0,1,2}$, each of which is $r_u$-Hilbertian, linked with a bounded trilinear form $\map{\Pi}{X_0 \times X_1 \times X_2}{\CC}$.
There are a few natural examples that one should keep in mind:
\begin{itemize}
\item Let $X$ be a UMD Banach space which is $r$-Hilbertian.
  Then the dual space $X^*$ is also UMD and $r$-Hilbertian, and we can consider the `duality trilinear form'
  \begin{equation*}
    \map{\Pi}{X \times X^* \times \CC}{\CC}, \qquad \Pi(x,x^*,\lambda) := \lambda x^*(x).
  \end{equation*}
  Since $\CC$ is UMD and $2$-Hilbertian (i.e. Hilbert), the corresponding region of exponents in Theorem \ref{thm:intro-tritile-boundedness} is nonempty provided
  \begin{equation*}
    \frac{2}{r} + \frac{1}{2} > 1,
  \end{equation*}
  i.e. when $r < 4$.

\item Consider a H\"older triple of exponents $r_0, r_1, r_2 \in (1,\infty)$, so that the Lebesgue spaces $L^{r_u}(\R)$ are UMD and $\max(r_u,r_u^\prime)$-Hilbertian, and there is no exponent $r < \max(r_u,r_u^\prime)$ such that $L^{r_u}(\R)$ is $r$-Hilbertian.
  Consider the `integration trilinear form'
  \begin{equation*}
    \map{\Pi}{L^{r_0}(\R) \times L^{r_1}(\R) \times L^{r_2}(\R)}{\CC}, \qquad \Pi(f,g,h) := \int_\Sigma f(x)g(x)h(x) \, \dd x.
  \end{equation*}
  Then Theorem \ref{thm:intro-tritile-boundedness} would yield a nontrivial region of exponents provided that
  \begin{equation*}
    \sum_u \max(r_u,r_u^\prime)^{-1} > 1.
  \end{equation*}
  But since $\sum_u r_u = 1$, this occurs only if $\max(r_u,r_u^\prime) < r_u$ for some $u$, which is impossible.
  Thus this trilinear form never fits into our framework.
  This is in stark contrast with the results of Benea and Muscalu, who obtain bounds for $\BHF_\Pi$ for this trilinear form for any H\"older triple $r_0,r_1,r_2$ with $r_0,r_1 \in (1,\infty]$ and $r_2 \in [1,\infty)$ \cite{BM16}.
  The reason for this discrepancy is our reliance on UMD methods.

\item On the other hand, replacing $\R$ with $\NN$ in the preceding example, one can define the integration trilinear form on $\ell^{r_0} \times \ell^{r_1} \times \ell^{r_2}$ provided that $\sum r_u^{-1} \geq 1$.
  Thus this trilinear form fits into our framework provided that $\sum r_u^{-1} > 1$ and $r_u \geq 2$ for each $u$.
  The same holds when each $\ell^{r_u}$ is replaced by the Schatten class $\mc{C}^{r_u}$ and $\Pi$ is replaced by the `composition trilinear form'.
\end{itemize}

Here is a brief overview of the paper.
In Section \ref{sec:prelims} we introduce the basics of the Walsh group $\WW$ and the associated time-frequency analysis.
In Section \ref{sec:vvwpa} we discuss various Banach space properties and their analytic consequences.
In Section \ref{sec:outermeasures} we set up the framework of outer structures and outer-$L^p$ spaces.
Of particular importance are the size-Hölder inequality (Proposition \ref{prop:new-size-holder}) for the `randomised' sizes, and the size domination theorem (Theorem \ref{thm:size-domination}), which lets us control the randomised sizes by a simpler `deterministic' size.
Section \ref{sec:energy-embedding} is devoted to proving Theorem \ref{thm:intro-embeddings}.
Crucial to these arguments is a basic tile selection algorithm given in Proposition \ref{prop:tile-selection}.
This is a simpler version of a more familiar `tree selection algorithm' often used in time-frequency analysis; the simplification is thanks to the aforementioned size domination theorem.
Finally, in Section \ref{sec:tritile-form}, we deduce $L^p$ bounds and sparse domination for the tritile form.
Section \ref{sec:RMF} is an appendix, in which we sketch an alternative method using $R$-bounds and the RMF property; this requires additional Banach space assumptions, but the proof is a bit more direct.

\subsection{Notation}\label{sec:notation}

The letter $\WW$ will always stand for the $3$-Walsh group $\WW_3$; we always write $\WW_{\mf{p}}$ when we want to use a different parameter $\mf{p}$ (see Section \ref{sec:prelims}).
For a Banach space $X$ and $p \in [1,\infty]$, $L^p(\WW;X)$ denotes the Bochner space of strongly measurable functions $\WW \to X$ such that the function $x \mapsto \|f(x)\|_X$ is in the usual Lebesgue space $L^p(\WW)$.
For technical details on Bochner spaces see \cite[Chapter 1]{HNVW16}.
When $I \subset \WW$ is an interval and $f \in L_{\loc}^p(\WW;X)$, we let
\begin{equation}\label{eq:p-avg}
  \|f\|_{\sL^p(I;X)} := \bigg( \frac{1}{|I|} \int_I \|f(x)\|_X^p \, \dd x \bigg)^{1/p} 
\end{equation}
denote the $L^p$-average, and
\begin{equation}\label{eq:p-max-fun}
  M_p f(x) := \sup_{I \ni x} \|f\|_{\sL^p(I;X)} \qquad  \forall x \in \WW
\end{equation}
denote the triadic $p$-maximal function; the supremum is taken over all intervals $I \in \WW$ containing $x$.
For $f \in L_{\loc}^1(\WW;X)$ we let
\begin{equation*}
  \langle f \rangle_I := \frac{1}{|I|} \int_I f(x) \, \dd x \in X
\end{equation*}
denote the average, of $f$ on $I$.
For $f\in \Sch(\WW;X)$ and $g\in\Sch(\WW;\C)$ let
\begin{equation*}
  \langle f; g \rangle := \int_\WW f(x)g(x) \, \dd x \in X.
\end{equation*}

We say that a triple of exponents $(p_u)_{u \in\{0,1,2\}}$ with $p_u \in [1,\infty]$ is a \emph{Hölder triple} if $\sum_{u=0}^2 p_u^{-1} = 1$.

Throughout the paper, we use $(\varepsilon_n)_{n \in A}$ to denote a sequence of independent Rademacher variables (i.e. random variables that take the values $\pm1$ with equal probability), indexed over some countable indexing set $A$. It never matters precisely which probability space these Rademacher variables live on. We denote the expectation over this probability space by $\E$.

\subsection{Acknowledgements}

Part of this research was completed while the first author was a postdoctoral researcher at the TU Delft and while the second author was a doctoral student at Bonn International Graduate School.

The first author was supported by the VIDI subsidy 639.032.427 of the Netherlands Organisation for Scientific Research (NWO) and a Fellowship for Postdoctoral Researchers from the Alexander von Humboldt Foundation.
We thank Mark Veraar and Christoph Thiele for their encouragement and suggestions, 



 \section{Walsh time-frequency analysis}
 \label{sec:prelims}
 In this section we introduce the Walsh group $\WW$ and the extended Walsh phase plane.
In particular we introduce tiles, wave packets, tritiles, trees, and strips; none of this material is new; we include it here for the convenience of the reader, and to fix notation.
In Subsection \ref{sec:defect} we introduce the defect operator, which is an important technical tool in our analysis.

\subsection{The Walsh group}

Fix an integer $\pw\geq 2$.
The Walsh group $\WW_{\pw}$ is
\begin{equation}
  \label{eq:p-walsh}
  \begin{split}
    \WW_{\pw} := \Bigl\{x\in  \prod_{n\in\Z} \Z/\pw\Z  \colon |x|<\infty\Bigr\}, 
  \end{split}
\end{equation}
where $|x|=\max \bigl\{\pw^{n} \colon [x]_{n}\neq0\bigr\}$ and $[x]_{n}$ is the $n$\textsuperscript{th} component (`digit') of $x$.
The group operation $+$ is the digit-wise addition in $\Z/\pw\Z$, and the map $(x,y)\mapsto |x-y|$ is a translation invariant metric on $\WW_{\pw}$, giving $\WW_{\pw}$ the structure of a locally compact abelian group, and thus guaranteeing the existence of a Haar measure on $\WW_{\pw}$.
We normalise this measure so that $|B_{1}(0)|=1$, and it follows that
\begin{equation}
  \label{eq:haar-measure}
   |B_{\pw^{n}}(x)|=\pw^{n} \qquad \forall x \in \WW_{\pw}.
 \end{equation}

 As explained in the introduction, there is a correspondence between the Walsh group and the non-negative reals $[0,\infty)$ given by the surjective map
 \begin{equation*}
   \WW_{\pw}\ni x \mapsto  \sum_{n \in \ZZ} [x]_n \pw^n \in [0,\infty), 
 \end{equation*}
 which is injective up to a set of measure zero.
 The pullback of the Lebesgue measure by this map is the Haar measure on $\WW_{\pw}$, and intervals in $[0,\infty)$ correspond to balls in $\WW_{\pw}$.
 Thus we often refer to Walsh balls as intervals.
 
 Let $X$ be a Banach space.
 We say that a function $\map{f}{\WW_{\pw}}{X}$ is \emph{Schwartz}, denoted $f \in \Sch(\WW_{\pw};X)$, if there exists $N>0$ such that $f$ is supported on $B_{\pw^{N}}(0)$ and constant on any interval $I$ with $|I|<\pw^{-N}$. For all $p \in [1,\infty)$, the Schwartz functions are dense in $L^p(\WW_{\pw};X)$. 

 The dual group of $\WW_{\pw}$ can be identified with $\WW_{\pw}$ itself, and the characters of $\WW_{\pw}$ are the Walsh exponentials
\begin{equation}\label{eqn:walsh-exp}
  \exp_\xi(x) := (e^{2\pi i / \pw})^{\sum_{j+k = -1}[\xi]_j [x]_k}\qquad \forall x,\xi \in \WW_{\pw}. 
\end{equation}
The Walsh--Fourier transform of a function $f \in L^1(\WW_{\pw};\CC)$ is thus 
\begin{equation}\label{eq:WFT}
  \FT f(\xi) := \int_{\WW_{\pw}} f(x) \bar{\exp_{\xi}(x)} \, \dd x \qquad \forall  \xi \in \WW_{\pw},
\end{equation}
and  we have the Plancherel identity
\begin{equation*}
  \int_{\WW_{\pw}} \FT f(\xi) \bar{\FT g(\xi)} d \xi = \int_{\WW_{\pw}} f(x) \bar{g(x)} \, \dd x \qquad \forall f,g \in \Sch(\WW_{\pw};\CC).
\end{equation*}
Consider the \emph{modulation}, \emph{translation}, and \emph{dilation} operators on functions $f \colon \WW_{\pw} \to \CC$, given by
\begin{equation}
  \label{eq:symmetries}
  \begin{aligned}
    & \Mod_{\eta}f(x) := \exp_\eta(x) f(x) & \qquad & \forall \eta \in \WW_{\pw},\\
    & \Tr_{y}f(x) := f(x-y) & \qquad & \forall y \in \WW_{\pw},\\
    & \Dil_{\pw^{n}} f (x):= \pw^{-n} f(\pw^{-n} x) & \qquad & \forall n\in\Z,
  \end{aligned}
\end{equation}
where  $[\pw^{-n}x]_{j}:=[x]_{j+n}$ for all $n,j \in \Z$. It follows from the definition of the Walsh--Fourier transform that
\begin{equation}
  \label{eq:fourier-symmetries}
  \begin{aligned}
    & \FT {\Mod_{\eta}f} = \Tr_{y}\FT{f} \\
    & \FT{\Tr_{y}f} = \Mod_{y}\FT{f} \\
    & \FT{\Dil_{\pw^{n}}f} = \pw^{-n} \Dil_{\pw^{-n}} \FT{f}
  \end{aligned}
  \qquad \forall f \in \Sch(\WW_{\pw};\CC).
\end{equation}

Given two intervals $I, I'\subset \WW$, it holds that
\begin{equation*}
  I\cap I'\neq \emptyset \quad \implies\quad I\subseteq I'\text { or } I'\subseteq I.
\end{equation*}
This is a familiar property of $\pw$-adic intervals in $[0,\infty)$.  Each interval has $\mf{p}$ child intervals $\{\ch_{0}(I),\ch_{1}(I),\ldots,\ch_{\mf{p}-1}(I)\}=\ch(I)$ given by
\begin{equation}
  \label{eq:children}
  \ch_{j}(I)=\{x\in I \colon [x]_{(\log_{\mf{p}}|I|) - 1}=j \}.
\end{equation}

\textbf{In the remainder of the paper we will work with the case $\pw = 3$, and we will write $\WW := \WW_3$.}

\subsection{The extended Walsh phase plane}

Strictly speaking, the extended Walsh phase plane is $\{(x,\xi,3^{n}) \in \WW \times \WW\times \R^{+} \colon n\in\Z\}$ where each point $(x,\xi,3^{n}) \in \WW \times \WW \times \R^{+}$ represents the time $x$, the frequency $\xi$, and the scale $3^n$. We can identify each point $(x,\xi,3^{n})$ with the rectangle $B_{3^{n}}(x)\times B_{3^{-n}}(\xi)\subset\WW \times \WW$; this provides for a more graphically intuitive way of thinking of time-frequency localisation. This identification is not injective, but it turns out that that this failure of injectivity correctly encodes the `uncertainty principle' i.e. the impossibility of determining both position (in time) and frequency to an arbitrary scale.

We thus introduce the notion of a \emph{tile}.

\begin{defn}[Tiles]\label{def:tile}
  A \emph{tile} is a rectangle $P = I_P \times \omega_P$ in $\WW \times \WW$ of area $1$, such that the sides $I_P$ and $\omega_P$ are intervals.
  We call $I_P$ the \emph{time interval} and $\omega_P$ the \emph{frequency interval} of $P$.
  For each tile $P$ there exist unique $x_{P},\xi_{P} \in \WW$ and $n\in\Z$ such that
  \begin{equation}
    \label{eq:tile-parameters}
    \begin{aligned}
      &  I_{P}=B_{|I_P|}(x_{P}), & & [x_{P}]_{j}=0 \text{ for } j < n, \\
      &  \omega_{P}=B_{|I_P|^{-1}}(\xi_{P}), & & [\xi_{P}]_{j}=0 \text{ for }  j < -n .
    \end{aligned}
  \end{equation}
  We call $x_P$ the \emph{centre} of the tile, and $\xi_{P}$ the \emph{frequency} of the tile.
  We denote the set of all tiles by $\PP$.
\end{defn}

To each tile $P$ we associate a wave packet $w_P$, which is a $\CC$-valued function supported in $I_P$ with frequency support $\omega_P$.
In the time-frequency sense, the wave packet $w_P$ is localised to $P$.

\begin{defn}[Wave packets]\label{def:wave-packet}
  Given a tile $P\in \PP$, the \emph{wave packet associated with $P$} is the function
  \begin{equation}
    \label{eq:wave-packet}
    w_{P}(x)= \Mod_{\xi_{P}}\Tr_{x_{P}}\Dil_{|I_{P}|}\1_{B_{1}(0)}(x)= |I_{P}|^{-1}\exp_{\xi_P}(x)\1_{I_{P}}(x).
  \end{equation}
  This is the unique function, up to multiplication by a unimodular constant, such that
  \begin{equation}
      \label{eq:wave-packet-spt}
      \spt w_{P} =I_{P}, \qquad  \FT{w_{P}} = \omega_{P}, \quad \text{and} \quad \| w_{P} \|_{L^1(\WW)}=1.
  \end{equation}
\end{defn}

\begin{rmk}\label{rmk:wave-packets-tiles}
It is convenient to identify wave packets with tiles, and thus to consider the translation, dilation, and modulation operators \eqref{eq:symmetries} as acting directly on tiles, so that for example
\begin{equation*}
  \Mod_{\xi} P = P' \iff \Mod_{\xi} w_{P} = c \, w_{P'} \text{ for some $|c|=1$.}
\end{equation*}
We could equivalently define our wave packets with an arbitrary choice of unimodular constant out the front; all the statements we make about wave packets will be invariant under this transformation.
In essence, what is most important is not the wave packet itself, but the subspace of $L^2(\WW;\CC)$ that it spans.
\end{rmk}

Simple support (and Walsh--Fourier support) considerations show that two tiles are disjoint if and only if their associated wave packets are orthogonal.
More refined statements can be made about the connection between tiles and wave packets. For example, a union of disjoint tiles $\bigcup_i P_i$ corresponds to the subspace of $L^2(\WW;\CC)$ spanned by the pairwise orthogonal wave packets $(w_{P_i})_i$, and this subspace does not depend on the specific representation of $\bigcup_i P_i$ as a disjoint union of tiles. In particular, if a tile $P$ is contained in such a union, then the wave packet $w_P$ can be written as a linear combination of the wave packets $w_{P_i}$. This is made precise in the following lemma.

\begin{lem}[Basis expansion of wave packets]\label{lem:wave-packet-interaction}
  Let $(P_{i})_{i\in\{1,\dots,N\}}$ be a finite collection of pairwise disjoint tiles.
  Then for any $P\subset \cup_{i=1}^{N} P_{i} $ it holds that
  \begin{equation}
    \label{eq:wave-packet-tile-proj}
    w_{P} = \sum_{i=1}^{N} \langle w_{P}; w_{P_{i}} \rangle w_{P_{i}} |I_{P_{i}}|.
  \end{equation}
\end{lem}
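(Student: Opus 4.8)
The plan is to exploit the Hilbert space structure of $L^2(\WW;\CC)$ together with the fact, noted in the discussion preceding the lemma, that disjoint tiles correspond to orthogonal wave packets and that the span of $(w_{P_i})_i$ depends only on the union $\bigcup_i P_i$ as a set. First I would observe that, since the $P_i$ are pairwise disjoint, the wave packets $(w_{P_i})_{i=1}^N$ are pairwise orthogonal in $L^2(\WW;\CC)$; moreover each has $L^2$-norm $\|w_{P_i}\|_{L^2(\WW)}^2 = |I_{P_i}|^{-1}$, since $|w_{P_i}| = |I_{P_i}|^{-1}\1_{I_{P_i}}$. Hence the rescaled family $(|I_{P_i}|^{1/2} w_{P_i})_{i=1}^N$ is orthonormal, and the orthogonal projection of any $g \in L^2(\WW;\CC)$ onto its span is
\begin{equation*}
  \Pi g = \sum_{i=1}^N \langle g; \bar{w_{P_i}} \rangle_{L^2} \, w_{P_i} \, |I_{P_i}|,
\end{equation*}
which matches the right-hand side of \eqref{eq:wave-packet-tile-proj} once one checks that $\langle g; w_{P_i}\rangle$ in the paper's pairing notation $\langle f;g\rangle = \int f g$ agrees, up to the relevant conjugations, with the $L^2$ inner product applied to $w_P$ (the wave packet coefficients are genuinely the projection coefficients; the unimodular-constant ambiguity of Remark \ref{rmk:wave-packets-tiles} cancels between the two occurrences of $w_{P_i}$).

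The substance of the argument is then to show that $w_P$ already lies in the span $V := \operatorname{span}(w_{P_i} : i = 1,\dots,N)$, so that $\Pi w_P = w_P$. Here I would use the second key fact from the preamble: $V$ is exactly the subspace of $L^2(\WW;\CC)$ determined by the set $E := \bigcup_{i=1}^N P_i \subset \WW\times\WW$, and this does not depend on how $E$ is decomposed into disjoint tiles. Since $P \subset E$, I can choose a refinement: decompose $E$ into a new finite family of pairwise disjoint tiles $(Q_j)_j$ such that $P$ is one of the $Q_j$. (Concretely: the tiles $P_i$ can each be split into subtiles — a tile of area $1$ splits into $\mf p$ subtiles either by halving the time interval and tripling the frequency interval or vice versa — and by iterating one reaches a common refinement in which $P$ appears, using the nesting property of triadic intervals stated just before \eqref{eq:children}.) Then $w_P \in \operatorname{span}(w_{Q_j}:j) = V$, and we are done.

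The step I expect to be the main obstacle is making precise the claim that "the subspace spanned by $(w_{P_i})$ depends only on $\bigcup P_i$" and that a common refinement containing $P$ exists — i.e. the combinatorics of refining a tiling of a down-set of tiles. The cleanest route is probably inductive: it suffices to handle a single elementary refinement, namely replacing one tile $Q$ in a disjoint family by its $\mf p = 3$ children subtiles (in either the time or frequency direction), and to show that this does not change the span. That reduces to the $N=1$ case of the lemma applied to $Q$ and its subtiles — which can be verified by a direct computation: the $3$ subtiles of $Q$ in, say, the frequency direction have wave packets $w_{Q,k}(x) = |I_Q|^{-1}\exp_{\xi_Q}(x)\exp_{k\text{-th harmonic}}(x)\1_{I_Q}(x)$, and one checks that $\sum_k \langle w_Q; w_{Q,k}\rangle w_{Q,k}|I_{Q,k}| = w_Q$ using orthogonality of characters on $I_Q$ and the fact that $w_Q$ is constant-modulus on $I_Q$ with the appropriate single frequency. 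Iterating elementary refinements connects any two disjoint tilings of the same set $E$, so in particular connects $(P_i)$ to a tiling containing $P$, and the span is preserved at each step; this yields $w_P \in V$ and hence the identity.
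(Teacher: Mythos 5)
Your route is genuinely different from the paper's: you want to read the right-hand side as the orthogonal projection of $w_P$ onto $V=\operatorname{span}(w_{P_i})$ and then show $w_P\in V$ by phase-plane combinatorics, whereas the paper reduces to the case $P\cap P_i\neq\emptyset$ for all $i$, observes that then either $I_{P_i}\subseteq I_P$ for all $i$ or $I_P\subseteq I_{P_i}$ for all $i$, and verifies the identity by a direct character computation. The problem is that the combinatorial half of your plan, which is where all the content lies, is not correctly set up. A tile has area $1$, so it admits no decomposition into $\mf{p}=3$ disjoint smaller tiles: splitting $I_Q$ into its three children forces the frequency side to be enlarged by a factor $3$, so the three rectangles you produce are not subsets of $Q$, and replacing $Q$ by them changes the region $E=\bigcup_i P_i$ that your argument needs to keep fixed. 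The correct elementary move is of a different kind: three tiles of the current tiling that jointly fill an area-$3$ rectangle with common time interval (a horizontal splitting of a tritile-shaped rectangle) may be exchanged for the vertical splitting of that same rectangle. Even after correcting the move, your assertion that iterated moves connect $(P_i)$ to a tiling of $E$ in which $P$ occurs (equivalently, that the span depends only on $\bigcup_i P_i$) is exactly the nontrivial statement the lemma is meant to make precise; it is asserted, not proved, and proving it requires an inductive argument at least as long as the paper's two-case computation. As written, the heart of the proof is missing.

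A secondary issue: your projection formula uses the sesquilinear $L^2$ inner product, while the coefficients in \eqref{eq:wave-packet-tile-proj} are written with the bilinear pairing $\langle f;g\rangle=\int fg$. For $3$-Walsh wave packets the characters are genuinely complex, so $\int w_P w_{P_i}$ and $\int w_P \overline{w_{P_i}}$ differ in general; this is not the unimodular-constant ambiguity of Remark \ref{rmk:wave-packets-tiles} and does not cancel between the two occurrences of $w_{P_i}$. You flag the point but dispose of it for the wrong reason; it needs either an explicit conjugation convention or a direct computation of the kind the paper performs (which keeps track of the factors $\exp_{\xi_{P_i}-\xi_P}$ and shows they are constant on the relevant intervals).
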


\begin{proof}
  We may assume that $P\cap P_{i}\neq \emptyset$ for all $i\in\{1,\dots,N\}$, for otherwise we would have $\langle w_P; w_{P_i} \rangle = 0$ and $P_i$ would not contribute to the right hand side of (\ref{eq:wave-packet-tile-proj}).

  If $I_{P_{i}}\subset I_{P} \subset I_{P_{j}}$ for $i\neq j$ then $P_{i}\cap P_{j}\neq \emptyset$, contradicting  the assumption, so  either $I_{P}\supset I_{P_{i}}$ for all $i\in\{1,\ldots,N\}$ or $I_{P}\subset I_{P_{i}}$ for all $i\in\{1,\ldots,N\}$.
  We consider only the first case, as the proof of the second is similar.
  Write
  \begin{equation*}
    \begin{aligned}
      &\sum_{i=1}^{N} \langle w_{P}; w_{P_{i}} \rangle w_{P_{i}} |I_{P_{i}}| = \Mod_{\xi_{P}} \sum_{i=1}^{N} \langle\Mod_{-\xi_{P}} w_{P};\Mod_{-\xi_{P}} w_{P_{i}} \rangle  \Mod_{-\xi_{P}}w_{P_{i}} |I_{P_{i}}|
      \\
      &=\Mod_{\xi_{P}} |I_{P}|^{-1} \sum_{i=1}^{N} \langle\1_{I_{P}};\Mod_{\xi_{P_{i}}-\xi_{P}} |I_{P_{i}}|^{-1}\1_{I_{P_{i}}} \rangle  \Mod_{\xi_{P_{i}}-\xi_{P}}\1_{I_{P_{i}}}
      \\
      &=\Mod_{\xi_{P}} |I_{P}|^{-1} \sum_{i=1}^{N} \langle\1_{I_{P}};\exp_{\xi_{P_{i}}-\xi_{P}}(x-x_{P_{i}}) |I_{P_{i}}|^{-1}\1_{I_{P_{i}}} \rangle  \exp_{\xi_{P_{i}}-\xi_{P}}(x-x_{P_{i}})\1_{I_{P_{i}}}
      \\
      &=\Mod_{\xi_{P}} |I_{P}|^{-1} \sum_{i=1}^{N} \langle\1_{I_{P}}; |I_{P_{i}}|^{-1}\1_{I_{P_{i}}} \rangle \1_{I_{P_{i}}}.
  \end{aligned}
\end{equation*}
The third identity comes from the fact that $\omega_{P}\subset\omega_{P_{i}}$ and thus $|\xi_{P_{i}}-\xi_{P}|<|I_{P}|^{-1}$, so that by \eqref{eqn:walsh-exp} it holds that 
\begin{equation*}
  \exp_{\xi_{P_{i}}-\xi_{P}}(x-x_{P_{i}}) = 1.
\end{equation*}
Since the intervals $(I_{P_{i}})_{i\in\{1,\ldots,N\}}$ partition $I_{P}$, we have
\begin{equation*}
  \sum_{i=1}^{N} \langle\1_{I_{P}}; |I_{P_{i}}|^{-1}\1_{I_{P_{i}}} \rangle \1_{I_{P_{i}}}=\1_{I_{P}},
\end{equation*}
completing the proof when $I_{P_i} \subset I_P$ for all $i$.
\end{proof}

The expression \eqref{eqn:tritile-form} of the tritile form involves multiplication of `nearby' wave packet coefficients of three separate functions.
This `nearness' of tiles is encoded by grouping triples of frequency-adjacent tiles into \emph{tritiles}.

\begin{defn}[Tritiles]\label{def:3tile}
  A \emph{tritile} is a rectangle $\mb{P} = I_{\mb{P}} \times \omega_{\mb{P}}$ of area $3$, such that the sides $I_{\mb{P}}$ and $\omega_{\mb{P}}$ are intervals.
  As with tiles, for every tritile $\mb{P}$ there are unique $x_{\mb{P}},\xi_{\mb{P}} \in \WW$ and $n\in\Z$ such that
  \begin{equation}
    \label{eq:ptile-parameters}
    \begin{aligned}
      &  I_{\mb{P}}=B_{3^n}(x_{\mb{P}}) & & [x_{\mb{P}}]_{j}=0 \text{ for } j  < n \\
      &  \omega_{\mb{P}}=B_{3^{-n+1}}(\xi_{\mb{P}}) & & [\xi_{\mb{P}}]_{j}=0 \text{ for }  j< -n+1.
    \end{aligned}
  \end{equation}
  We denote the set of all tritiles by $\tPP$.
  Every tritile $\mb{P}$ can be written in a unique way as a disjoint union of $3$ tiles with time interval $I_{\mb{P}}$; these tiles are given by
  \begin{equation}
    \label{eq:3tile-split}
    \mb{P}_{v} := I_{\mb{P}}\times\ch_{v}(\omega_{\mb{P}}), \qquad \forall v \in \{0,1,2\}.
  \end{equation}
  Conversely, for every tile $P$, there is a unique tritile $\mb{P}$ such that that $P = \mb{P}_v$ for some $v \in \{0,1,2\}$.
  This splitting of $\mb{P}$ into tiles is the \emph{horizontal splitting}; there is also a \emph{vertical splitting}
  \begin{equation}
    \label{eq:vert-split}
    \vs{\mb{P}} := \{J \times \omega_{\mb{P}} : J \in \ch(I)\}
  \end{equation}
  that we will use less often.
\end{defn}

The horizontal and vertical splittings are sketched in Figure \ref{fig:splittings}.

\begin{figure}\label{fig:splittings}

  \begin{tikzpicture}[scale=1.5]

    \draw[black] (-2,0) -- (-1,0) -- (-1,1) -- (-2,1) -- cycle;
    \node at (-1.5,-0.2) {$\mb{P}$};
    
    \draw[black] (0,0) -- (1,0) -- (1,1) -- (0,1) -- cycle;
    \draw[black] (0,0.333) -- (1,0.333);
    \draw[black] (0,0.666) -- (1,0.666);
    \node[scale=0.75][above] at (0.5,0) {$\mb{P}_0$};
    \node[scale=0.75][above] at (0.5,0.333) {$\mb{P}_1$};
    \node[scale=0.75][above] at (0.5,0.666) {$\mb{P}_2$};

    \draw[black] (2,0) -- (3,0) -- (3,1) -- (2,1) -- cycle;
    \draw[black] (2.333,0) -- (2.333,1);
    \draw[black] (2.666,0) -- (2.666,1);
    \draw[decoration={brace,mirror,raise=5pt},decorate]
    (2,0) -- node[below=6pt]{$\vs{\mb{P}}$} (3,0);
  \end{tikzpicture}

  \caption{A tritile, the horizontal splitting, and the vertical splitting.}
\end{figure}
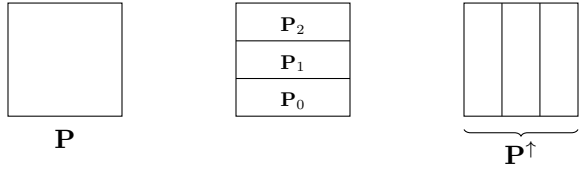

\begin{rmk}\label{rmk:tritile-functions}
  It is occasionally useful to identify the tritile $\mb{P}$ with the set of corresponding tiles $\{\mb{P}_0,\mb{P}_1,\mb{P}_2\}$, and to consider these tiles as `subtiles' of $\mb{P}$.
  Furthermore, given a Banach space $X$ and a triple-valued function on tritiles $\map{F}{\tPP}{X^3}$, we can identify $F$ with an $X$-valued function on tiles $\map{\tilde{F}}{\PP}{X}$ defined by
  \begin{equation*}
    \tilde{F}(P) = F(\mb{P})_u,
  \end{equation*}
  where $\mb{P} \in \tPP$ and $u \in \{0,1,2\}$ are uniquely determined such that $\mb{P}_u = P$.
  We will abuse notation and write $F = \tilde{F}$.
\end{rmk}

We consider $\tPP$ as being the `correct' representation of the extended Walsh phase plane, and for us it plays the role that $\R^3_+$ plays for time-frequency analysis on the real line, as explained in the introduction.

One of Fefferman's (many) innovations in his proof of Carleson's theorem was the introduction of a partial order on tiles.
Using this order one can define \emph{trees}, which represent sets of tiles that are frequency-localised at a certain `top frequency', with time restricted to a given interval.
On these subsets, time-frequency analysis is essentially reduced to Calder\'on--Zygmund theory.\footnote{See for example Proposition \ref{prop:lacunary-tree-proj}, which controls $L^p$ norms of randomised sums (the Banach-valued analogue of square functions) of projections of a function $f$ onto a tree $T$ in terms of $\|f\|_p$.}

\begin{defn}[Order and trees]\label{def:tree}
  Given two tritiles $\mb{P}$ and $\mb{P}^\prime$, we say that
  \begin{equation}
    \label{eq:ptile-order}
    \mb{P}^\prime \leq \mb{P} \qquad\text{if}\qquad I_{\mb{P}^\prime}\subseteq I_{\mb{P}} \text{ and } \omega_{\mb{P}^\prime}\supseteq \omega_{\mb{P}}.
  \end{equation}
  The \emph{tree} with top $\mb{P}$ is the collection of tritiles
  \begin{equation}
    \label{eq:tree}
    T(\mb{P}):=\{\mb{Q} \in \tPP \colon \mb{Q} \leq \mb{P}\}.
  \end{equation}
  Given a tree $T$ we denote by $\mb{P}_{T}$ the unique tritile such that $T=T(\mb{P}_{T})$. We write  $I_{T}:=I_{\mb{P}_{T}}$, $\omega_{T}:=\omega_{\mb{P}_{T}}$, $x_{T}=x_{\mb{P}_{T}}$, and $\xi_{T}=\xi_{\mb{P}_{T}}$.
  The collection of all trees is denoted by $\TT$.
  For each $u \in \{0,1,2\}$ the \emph{$u$-component} of $T$ is given by
  \begin{equation*}
    T^u := \{\mb{Q} \in T \colon \omega_T \cap \omega_{\mb{Q}_u}\neq \emptyset\},
  \end{equation*}
  so that $\mb{P}_{T}\subset T^{u}$ for all $u\in\{0,1,2\}$, and the sets $T^u \sm \{\mb{P}_T\}$ partition $T \sm \{\mb{P}_T\}$.
\end{defn}

\begin{rmk}\label{rmk:tile-tree}
  Given a tile $P$ and a tree $T$, it will be useful to write $P \in T$ to mean that $\mb{P} \in T$, where $\mb{P}$ is the unique tritile containing $P$ as a subtile (in the horizontal decomposition).
\end{rmk}

Another important class of subsets are the \emph{strips}, which consist of tiles with time restricted to a given interval, with no restriction on frequency.
These play an important role in the construction of iterated outer-$L^p$ quasinorms.
\begin{defn}[Strips]
  Given an interval $I \subset \WW$, the \emph{strip} $D=D(I)$ with \emph{top} $I$ is the collection of tritiles
  \begin{equation*}
    D(I) := \{\mb{P} \in \tPP : I_P \subset I\}.
  \end{equation*}
  Given a strip $D$ we denote by $I_{D}$ the unique interval such that $D=D(I)$. The collection of  all strips is denoted by $\DD$.
\end{defn}

Finally, we define the notion of convexity for sets of tritiles.

\begin{defn}[Convex sets]\label{defn:convex}
  A set of tritiles $\AA\subset \tPP$ is \emph{convex} if $\mb{P},\mb{P}'\in \AA$, $\mb{Q}\in \tPP$, and $\mb{P}\leq\mb{Q}\leq\mb{P}'$ imply $\mb{Q}\in\AA$. 
\end{defn}

Note that trees, strips, and their complements are convex, and that the intersection of two convex sets is convex.

\subsection{The embedding and the defect}\label{sec:defect}

Consider a Banach space $X$ and a function $\map{f}{\WW}{X}$.
Recall from the introduction the \emph{embedding} $\map{\Emb[f]}{\PP}{X}$, defined by
\begin{equation*}
  \Emb[f](P) = \langle f; w_P \rangle.
\end{equation*}
A general function $\map{F}{\PP}{X}$ cannot be realised as an `embedded function' $F = \Emb[f]$, as the wave packet coefficients $\langle f; w_P \rangle$ are not independent.
This lack of independence is codified by the relations in Lemma \ref{lem:wave-packet-interaction}.
We use these relations to construct a `defect operator', which measures how far a function $\map{F}{\PP}{X}$ is from being an embedded function.

\begin{defn}[Defect operator]
  Given a Banach space $X$ and a function $\map{F}{\PP}{X}$, the \emph{defect} $\map{\DEF F}{\PP}{X}$ is given by
  \begin{equation}\label{eq:defect}
    \DEF F (P) =F(P) -  \sum_{Q\in\vs{\mb{P}}}F(Q) \bigl\langle w_{Q}; w_{P} \bigr\rangle |I_{Q}| \qquad (P \in \PP)
  \end{equation}
  where $\mb{P} \in \tPP$ is the unique tritile containing $P$, and $\vs{\mb{P}}$ is the vertical splitting of $\mb{P}$ defined in \eqref{eq:vert-split}.

\end{defn}

The defect operator satisfies
\begin{equation}\label{eq:defect-norm-bound}
  \| \DEF F(P) \|_{X}\lesssim \| F(P) \|_{X}+ \sum_{Q \in \vs{\mb{P}}} \| F(Q) \|_{X}
\end{equation}
and, in virtue of Lemma \ref{lem:wave-packet-interaction}, if $F=\Emb[f]$ for some $\map{f}{\WW}{X}$ then $\DEF F=0$.

In the following proposition, we show how a function $\map{F}{\PP}{X}$ can be decomposed as the sum of an embedded function and its defect.

\begin{prop}[Function reconstruction]\label{prop:func-reconstruction}
  Let $T$ be a tree, and let $P$ be a tile with $P \in T$ (recall from Remark \ref{rmk:tile-tree} that this means $\mb{P} \in T$, where $\mb{P}$ is the unique tritile with $P \in \mb{P}$).
  Then for all\ $N\in\N$ it holds that
  \begin{equation}\label{eq:func-reconstruction}
  \begin{aligned}
    F(P) =  \DEF F(P) & +
    \biggl\langle
      \sum_{\substack{Q\in T \\ |I_{P}|>|I_Q|\geq3^{-N}|I_{P}| }}
      \DEF F(Q) \,w_{Q} |I_Q|\,
      \,;\;
      w_{P}
    \biggr\rangle
    \\
    &
    +
    \biggl\langle
      \sum_{\substack{Q\in T \\ |I_Q|=3^{-(N+1)}|I_P|}}
      F(Q)w_{Q}|I_{Q}|
      \,;\;
      w_{P}
    \biggr\rangle.
  \end{aligned}
\end{equation}
\end{prop}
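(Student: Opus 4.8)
The plan is to argue by induction on $N\geq 0$. The smallest case, $N=0$ (when the first sum in \eqref{eq:func-reconstruction} is empty), is just the definition \eqref{eq:defect} of the defect solved for $F(P)$: pairing the second sum against $w_P$ produces $\sum_Q F(Q)\langle w_Q;w_P\rangle|I_Q|$ over the tiles $Q\in T$ with $|I_Q|=3^{-1}|I_P|$, and $\langle w_Q;w_P\rangle=0$ whenever $Q$ and $P$ are disjoint (disjoint tiles have orthogonal wave packets). Comparing the side lengths of $Q$ and $P$ shows the surviving tiles are exactly those with $I_Q\in\ch(I_P)$ and $\omega_Q=\omega_{\mb{P}}$, i.e.\ the three tiles of $\vs{\mb{P}}$; conversely each $Q\in\vs{\mb{P}}$ has nonzero pairing with $w_P$, and its tritile $\mb{Q}$ satisfies $\mb{Q}\leq\mb{P}$, so $\mb{Q}\in T$. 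Hence the second sum equals $\sum_{Q\in\vs{\mb{P}}}F(Q)\langle w_Q;w_P\rangle|I_Q|$, and the base case is \eqref{eq:defect}.

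For the inductive step, subtracting \eqref{eq:func-reconstruction} at level $N$ from the same identity at level $N+1$ reduces the claim to showing, for every tree $T$ and tile $P\in T$, that
\begin{multline*}
  \Bigl\langle\sum_{\substack{Q\in T\\ |I_Q|=3^{-(N+1)}|I_P|}}F(Q)\,w_Q\,|I_Q|\,;\;w_P\Bigr\rangle\\
  =\Bigl\langle\sum_{\substack{Q\in T\\ |I_Q|=3^{-(N+1)}|I_P|}}\DEF F(Q)\,w_Q\,|I_Q|\,;\;w_P\Bigr\rangle\\
  +\Bigl\langle\sum_{\substack{Q\in T\\ |I_Q|=3^{-(N+2)}|I_P|}}F(Q)\,w_Q\,|I_Q|\,;\;w_P\Bigr\rangle .
\end{multline*}
Each pairing here is a finite sum, since $\langle w_Q;w_P\rangle$ vanishes unless $Q$ meets $P$, and at a fixed scale below $|I_P|$ only finitely many tiles do so. Into the left-hand side I would substitute the rearranged definition of the defect, $F(Q)=\DEF F(Q)+\sum_{R\in\vs{\mb{Q}}}F(R)\langle w_R;w_Q\rangle|I_R|$; the $\DEF F(Q)$ part reproduces the first term on the right, leaving the double sum $\sum_{Q}\sum_{R\in\vs{\mb{Q}}}F(R)\langle w_R;w_Q\rangle\langle w_Q;w_P\rangle|I_R||I_Q|$, which must be identified with the last term.

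For this I would swap the order of summation. For a fixed tile $R$ with $|I_R|=3^{-(N+2)}|I_P|$, the requirement $R\in\vs{\mb{Q}}$ pins down $\mb{Q}$ uniquely --- its time interval is the triadic parent of $I_R$ and its frequency interval is $\omega_R$ --- so the tiles $Q$ contributing to this $R$ are exactly the three subtiles of one tritile $\mb{Q}(R)$, all sharing the time length $3^{-(N+1)}|I_P|$, and $R\subset\mb{Q}(R)$, which is the disjoint union of those three subtiles. Lemma \ref{lem:wave-packet-interaction} therefore expands $w_R$ as the corresponding combination of the wave packets of the three subtiles of $\mb{Q}(R)$; pairing this expansion with $w_P$ collapses the inner sum over those subtiles to $\langle w_R;w_P\rangle$, and the double sum becomes $\sum_R F(R)\langle w_R;w_P\rangle|I_R|$, as required.

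The delicate point --- and, I expect, the only real obstacle --- is keeping track of the tree constraint ``$Q\in T$'' (meaning $\mb{Q}\in T$) through this rearrangement, checking that it re-emerges correctly as ``$R\in T$'' and that all three subtiles of $\mb{Q}(R)$ lie in $T$, so that Lemma \ref{lem:wave-packet-interaction} may legitimately be invoked within the tree. The observation that settles this is that every tile whose wave packet is not orthogonal to $w_P$ automatically lies in $T$: if $\langle w_R;w_P\rangle\neq0$ then $R$ meets $P$, and comparing side lengths forces $\mb{R}\leq\mb{P}$ and $\mb{Q}(R)\leq\mb{P}$, whence $\mb{R},\mb{Q}(R)\in T$ since $\mb{P}\in T$. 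Thus all the tree conditions occurring on either side of the identity are satisfied for the nonvanishing terms, the vanishing terms contribute nothing to either side, and the two sides agree. Everything else is a direct computation with the explicit wave-packet formula \eqref{eq:wave-packet} and the definition \eqref{eq:defect} of the defect.
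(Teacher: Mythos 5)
Your proof is correct and takes essentially the same route as the paper: induction on $N$, with the base case being the defect definition (after discarding the orthogonal terms and checking every $Q\in\vs{\mb{P}}$ lies in $T$), and the inductive step substituting the defect identity at scale $3^{-(N+1)}|I_P|$ and collapsing the resulting double sum via Lemma \ref{lem:wave-packet-interaction}. The only cosmetic difference is that the paper collapses by expanding $w_P$ over the scale-$3^{-(N+1)}|I_P|$ tiles of $T$ covering $P$, whereas you expand $w_R$ over the three subtiles of its parent tritile $\mb{Q}(R)$; both rest on the same lemma and the same tree-membership bookkeeping.
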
 

\begin{proof}
  We induct on $N \in \N$.
  If $N=0$ then the result follows immediately by definition of $\DEF F$, as the first sum is empty and the condition in the second sum is a rewriting of the condition $Q \in \vs{\mb{P}}$.

  Let us show that if \eqref{eq:func-reconstruction} holds for $N$ then it also holds for $N+1$.
  Apply the result with $N=0$ to each tile in the second sum to obtain 
  \begin{align*}
    &\sum_{\substack{Q\in T \\ |I_{Q}|=3^{-(N+1)}|I_{P}|}} F(Q) \langle  w_{Q}|I_{Q}|;w_P \rangle \\
    &=\sum_{\substack{Q\in T \\ |I_Q|=3^{-(N+1)}|I_P| }} \bigg(
      \DEF F(Q) + 
      \sum_{\substack{Q'\in T \\ |I_{Q'}|=3^{-(N+2)}|I_P|}} F(Q') \langle
        w_{Q'}|I_{Q'}|
        ;
        w_{Q} \rangle
     \bigg) \langle w_Q|I_Q|;w_P \rangle
    \\
    &= \mb{I} +
    \biggl\langle
      \sum_{\substack{Q'\in T \\ |I_{Q'}|=3^{-(N+2)}|I_P|}} F(Q') w_{Q'}|I_{Q'}|
    ;
    \sum_{\substack{Q\in T \\ |I_Q|=3^{-(N+1)}|I_P| }} \langle w_P;w_Q \rangle w_{Q}|I_Q| \biggr\rangle
     \\
    &=
      \mb{I}
    +
    \biggl\langle
      \sum_{\substack{Q \in T \\ |I_{Q'}|=3^{-(N+2)}|I_P|}}
      \hspace{-1.5em}
      F(Q')w_{Q'}|I_{Q'}|
      ;
      w_P 
    \biggr\rangle  
  \end{align*}
  where
  \begin{equation*}
    \mb{I} := \sum_{\substack{Q\in T \\ |I_Q|=3^{-(N+1)}|I_P| }} 
      \DEF F(Q) \langle w_Q|I_Q|;w_P \rangle;
  \end{equation*}
  where the last identity holds since the tiles $Q \in T$ with $|I_Q| = 3^{-(N+1)} |I_P|$ are disjoint and cover $P$. Plugging this into (\ref{eq:func-reconstruction}) for $N$ gives the statement for $N+1$ as required.
\end{proof}

\begin{rmk}
  We have remarked that $\DEF F=0$ if $F=\Emb[f]$ for some $\map{f}{\WW}{X}$.
  A similar result is true if $F$ is not precisely an embedded function, but rather a `cut-off' embedded function; for this result we need to think in terms of tritiles rather than tiles.
  If $F=\Emb[f]$ and $\AA\subset \tPP$, then $\DEF (\1_{\AA}F) (\mb{P}) \neq 0$ only if $P$ happens to be on the ``boundary'' of the set $\AA$; that is, if $\mb{P}\in\AA$ and there exists $\mb{Q}\leq \mb{P}$ with $|I_{\mb{Q}}|=|I_{\mb{P}}|/3$ such that $\mb{Q}\notin \AA$, or if $\mb{P}\notin \AA$ and there exists $\mb{Q}\leq \mb{P}$ with $|I_{\mb{Q}}|=|I_{\mb{P}}|/3$ such that $\mb{Q}\in\AA$.
  A crucial observation is that if $\AA$ is convex, then for any fixed $x\in\WW$ there exist at most two tritiles $\mb{P}$ on the boundary of $\AA$ with $x\in I_{\mb{P}}$.
  \end{rmk}


 \section{Analysis in Banach spaces}
 \label{sec:vvwpa}
 The harmonic analysis of functions $\map{f}{\WW}{X}$ valued in a Banach space $X$ exhibits phenomena that are not present in the scalar case $X = \CC$.
Generally techniques that work for scalar-valued functions require geometric assumptions on $X$ in order to have $X$-valued extensions.
The most famous of these geometric assumptions is the UMD (Unconditional Martingale Differences) property, which we discuss in Section \ref{sec:UMD}.
We will also require the $q$-Hilbertian property (also referred to as the $\theta$-Hilbertian property in the literature).
Before discussing these geometric assumptions we give a short introduction to Rademacher sums, a crucial tool in Banach-valued analysis without which not much can be said.

A relatively complete introduction to Banach-valued analysis is the incomplete series \cite{HNVW16,HNVW17}.
The reader will benefit from having a copy of these references at hand while reading this paper.

\subsection{Rademacher sums}\label{sec:r-sums}

A great deal of scalar-valued harmonic analysis is connected with square functions; that is, functions of the form
\begin{equation*}
   t \mapsto \bigg( \sum_{n=1}^N |f_n(t)|^2 \bigg)^{1/2} \qquad \forall t \in \R
\end{equation*}
where $(f_n)_{n \in \{1,\ldots,N\}}$ is a sequence of $\CC$-valued functions on $\RR$ (for example).
If $X$ is a Banach lattice (or in particular, a function space), then for all finite sequences $(x_n)_{n=1}^N$ in $X$ one can make sense of the quantity
\begin{equation*}
  \bigg( \sum_{n=1}^N |x_n|^2 \bigg)^{1/2} \in X
\end{equation*}
as an element of $X$.
However, for general Banach spaces $X$, this is not possible.
The correct $X$-valued analogue of a square function is a \emph{Rademacher sum}, which is a quantity of the form
\begin{equation*}
  \E \bigg\| \sum_{n=1}^N \varepsilon_n x_n \bigg\|_X := \int_\Omega \bigg\| \sum_{n=1}^N \varepsilon_n(\omega) x_n \bigg\|_X \, \dd \omega ,
\end{equation*}
where $(x_n)_{n=1}^N$ is a finite sequence in $X$, and where $(\varepsilon_n)_{n=1}^N$ is a sequence of independent Rademacher variables on some probability space $\Omega$, i.e. random variables taking the values $\pm 1$ with probability $1/2$.
When $X$ is a Banach lattice with finite cotype (for example, if $X = L^p(\Xi)$ for some $\sigma$-finite measure space $\Xi$, with $p \in [1,\infty)$), then Rademacher sums are equivalent to norms of square functions; that is,
\begin{equation}\label{eqn:khintchine-maurey}
  \E \bigg\| \sum_{n=1}^N \varepsilon_n x_n \bigg\|_{X} \simeq \bigg\| \bigg( \sum_{n=1}^N |x_n|^2 \bigg)^{1/2} \bigg\|_{X}
\end{equation}
for all finite sequences $(x_n)_{n \in \{1,\ldots,N\}}$ in $X$. This is the Khintchine--Maurey theorem \cite[Theorem 7.2.13]{HNVW17}.
In this paper we do not work with Banach lattices, so (other than this paragraph) we do not discuss square functions; only Rademacher sums.

Here we mention two particularly important results that allow us to manipulate Rademacher sums.
The first lets us replace the expectation in a Rademacher sum with an $L^p$-expectation for any $p \in (0,\infty)$; the second lets us pull out bounded scalar coefficients in a Rademacher sum.
We will use these results throughout the paper, often without mention.
For proofs see \cite[Theorems 6.2.4 and 6.1.13]{HNVW17}.

\begin{thm}[Kahane--Khintchine]\label{thm:kahane-khintchine}
  Let $X$ be a Banach space.  For all finite sequences $(x_n)_{n=1}^N$ in $X$ and all $p \in (0,\infty)$, we have the equivalence
  \begin{equation}\label{eqn:kahane-khintchine}
   \E \bigg\| \sum_{n=1}^N \varepsilon_n x_n \bigg\|_X \simeq_{p} \bigg(\E \bigg\| \sum_{n=1}^N \varepsilon_n(\omega) x_n \bigg\|_X^p \bigg)^{1/p}
 \end{equation}
 with implicit constant independent of $N$.
\end{thm}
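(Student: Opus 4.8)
The plan is to prove only the one nontrivial direction, namely $\bigl(\E\|S\|_X^p\bigr)^{1/p}\lesssim_p\E\|S\|_X$ for $p\in[1,\infty)$, where $S:=\sum_{n=1}^N\varepsilon_n x_n$. The reverse inequality for $p\ge1$ is Jensen's inequality for $t\mapsto t^p$; and once the case $p\ge1$ is known, both inequalities for $p\in(0,1)$ follow by interpolation. Indeed, for $p\in(0,1)$ the inequality $\bigl(\E\|S\|_X^p\bigr)^{1/p}\le\E\|S\|_X$ is Jensen for the concave map $t\mapsto t^p$, while for the other direction one fixes $r>1$ and chooses $\theta\in(0,1)$ with $1=\theta/p+(1-\theta)/r$; Hölder's inequality gives $\E\|S\|_X\le\bigl(\E\|S\|_X^p\bigr)^{\theta/p}\bigl(\E\|S\|_X^r\bigr)^{(1-\theta)/r}$, and bounding the last factor by $\E\|S\|_X$ via the already-proven case $p=r\ge1$ and rearranging yields $\E\|S\|_X\lesssim_p\bigl(\E\|S\|_X^p\bigr)^{1/p}$.

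For the main inequality I would first record the elementary bound $\max_{1\le n\le N}\|x_n\|_X\le\E\|S\|_X$. Fixing $n$ and conditioning on $(\varepsilon_m)_{m\ne n}$, and writing $R:=\sum_{m\ne n}\varepsilon_m x_m$, the conditional expectation of $\|S\|_X$ equals $\tfrac12\|x_n+R\|_X+\tfrac12\|{-x_n}+R\|_X\ge\|x_n\|_X$ by the triangle inequality, and integrating gives the claim.

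The heart of the argument is the Hoffmann--J{\o}rgensen inequality (a version of which is in \cite{HNVW17}): if $\xi_1,\dots,\xi_N$ are independent symmetric $X$-valued random variables and $T=\sum_{n=1}^N\xi_n$, then $\|T\|_{L^p(X)}\lesssim_p\|T\|_{L^1(X)}+\bigl\|\max_{1\le n\le N}\|\xi_n\|_X\bigr\|_{L^p}$ for every $p\in[1,\infty)$. Applying this with $\xi_n:=\varepsilon_n x_n$, so that $\max_n\|\xi_n\|_X=\max_n\|x_n\|_X$ is deterministic, and absorbing this maximum into $\E\|S\|_X$ using the previous paragraph, produces exactly $\bigl(\E\|S\|_X^p\bigr)^{1/p}\lesssim_p\E\|S\|_X$. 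To prove the Hoffmann--J{\o}rgensen inequality one first establishes its distributional form: letting $\tau$ be the first index at which the running maximum $\max_k\bigl\|\sum_{n\le k}\xi_n\bigr\|_X$ exceeds a level $t$, one splits $T$ into the pre-$\tau$ partial sum (of norm $\le t$), the single increment $\xi_\tau$ (of norm $\le\max_n\|\xi_n\|_X$), and the post-$\tau$ tail, and controls the tail using L\'evy's (Ottaviani's) maximal inequality together with the independence of the blocks before and after $\tau$; this yields $\PP(\|T\|_X>3t+s)\lesssim\PP(\|T\|_X>t)^2+\PP(\max_n\|\xi_n\|_X>s)$, which one iterates and integrates against $p\,t^{p-1}\,\dd t$.

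The main obstacle is the Hoffmann--J{\o}rgensen step: this is the only place where independence and symmetry are used in an essential way, and extracting an $L^p$ bound whose constant depends on $p$ alone (not on $X$ or $N$) requires care in iterating the distributional inequality and handling the running maxima. Everything else in the argument is soft. An alternative route would be to bound the tail of $\|S\|_X$ subgaussianly in terms of $\|S\|_{L^2(X)}$ by comparison with a Gaussian sum and the Gaussian concentration inequality, but this imports more machinery and still requires a comparison of the type $\max_n\|x_n\|_X\lesssim\|S\|_{L^2(X)}$.
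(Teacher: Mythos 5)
Your outline is correct, and there is nothing to compare it against inside the paper: Theorem \ref{thm:kahane-khintchine} is quoted there as a classical result, with the proof delegated to \cite[Theorem 6.2.4]{HNVW17} rather than given. Your route is one of the standard textbook proofs and is sound in all its steps: Jensen gives one direction for $p\ge 1$; the bound $\max_n\|x_n\|_X\le \E\|S\|_X$ by conditioning on the other signs is correct; the Hoffmann--J{\o}rgensen inequality applied to $\xi_n=\varepsilon_n x_n$ (whose maximum is deterministic) yields $\|S\|_{L^p(X)}\lesssim_p\|S\|_{L^1(X)}$ with constants depending only on $p$; and the case $p\in(0,1)$ follows from the case $p\ge 1$ by the Lyapunov/H\"older interpolation trick exactly as you describe (one should just note that $\E\|S\|_X<\infty$, which is automatic for a finite sum, so the division is legitimate). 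The distributional inequality you state, $\PP(\|T\|_X>3t+s)\lesssim \PP(\|T\|_X>t)^2+\PP(\max_n\|\xi_n\|_X>s)$, is indeed obtainable from the stopping-time decomposition plus L\'evy's inequality and independence of the pre- and post-$\tau$ blocks, and the inclusion of the $\max_n\|\xi_n\|_X$ term is precisely what handles the overshoot at time $\tau$, so the main technical point is addressed rather than glossed over. Compared with the argument in the cited reference, which runs through essentially the same circle of ideas (L\'evy-type maximal inequalities and a self-improving tail estimate), your proof is a mild variant rather than a genuinely different method; it buys nothing extra but is complete and has constants independent of $X$ and $N$, as the statement requires.
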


\begin{thm}[Kahane's contraction principle]\label{thm:contraction-principle}
  Let $X$ be a Banach space. For all finite sequences $(x_n)_{n=1}^N$ in $X$ and $(a_n)_{n=1}^N$ in $\CC$, we have
  \begin{equation}\label{eq:contraction-principle}
    \E \bigg\| \sum_{n=1}^N \varepsilon_n a_n x_n \bigg\|_X \lesssim  \|a\|_\infty \E \bigg\| \sum_{n=1}^N \varepsilon_n x_n \bigg\|_X,
  \end{equation}
  with implicit constant independent of $N$.
\end{thm}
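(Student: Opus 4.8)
The plan is to reduce to real scalars and then exploit convexity together with the symmetry of the Rademacher distribution. First I would treat the case where $(a_n)_{n=1}^N$ is a sequence of \emph{real} scalars with $|a_n|\le 1$ for all $n$. The key observation is that the function
\begin{equation*}
  \Phi(b) := \E \bigg\| \sum_{n=1}^N \varepsilon_n b_n x_n \bigg\|_X, \qquad b \in \R^N,
\end{equation*}
is convex: for each fixed realisation $\omega$ of the Rademacher variables, $b \mapsto \|\sum_n \varepsilon_n(\omega) b_n x_n\|_X$ is the composition of the norm on $X$ (which is convex) with a linear map, hence convex, and averaging in $\omega$ preserves convexity. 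Being finite-valued and convex on $\R^N$, $\Phi$ is continuous, so its restriction to the compact convex polytope $[-1,1]^N$ attains its maximum at an extreme point, i.e.\ at some $b=(\sigma_1,\dots,\sigma_N)$ with each $\sigma_n\in\{-1,+1\}$.

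Next I would note that at any such vertex the sequence $(\sigma_n\varepsilon_n)_{n=1}^N$ is again a sequence of independent Rademacher variables, so it has the same joint law as $(\varepsilon_n)_{n=1}^N$, and hence
\begin{equation*}
  \Phi(\sigma)=\E\bigg\|\sum_{n=1}^N \sigma_n \varepsilon_n x_n\bigg\|_X=\E\bigg\|\sum_{n=1}^N \varepsilon_n x_n\bigg\|_X.
\end{equation*}
Combining this with the previous paragraph yields $\Phi(a)\le \E\|\sum_n \varepsilon_n x_n\|_X$ whenever $|a_n|\le 1$ for all $n$; rescaling by $\|a\|_\infty$ then gives \eqref{eq:contraction-principle} with implicit constant $1$ in the real case, uniformly in $N$ (all quantities here are dimension-free).

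Finally, for complex $(a_n)_{n=1}^N$ I would split $a_n=\alpha_n+i\beta_n$ with $\alpha_n,\beta_n\in\R$ and $|\alpha_n|,|\beta_n|\le|a_n|\le\|a\|_\infty$, and apply the triangle inequality together with the real case applied to $(\alpha_n)_{n=1}^N$ and $(\beta_n)_{n=1}^N$:
\begin{equation*}
  \E\bigg\|\sum_{n=1}^N \varepsilon_n a_n x_n\bigg\|_X \le \E\bigg\|\sum_{n=1}^N \varepsilon_n \alpha_n x_n\bigg\|_X+\E\bigg\|\sum_{n=1}^N \varepsilon_n \beta_n x_n\bigg\|_X \le 2\|a\|_\infty\,\E\bigg\|\sum_{n=1}^N \varepsilon_n x_n\bigg\|_X,
\end{equation*}
which is \eqref{eq:contraction-principle}. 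There is no serious obstacle here: the only point needing a little care is the real-to-complex passage, which I have handled crudely at the cost of a harmless factor $2$ (one could do better, e.g.\ with constant $\pi/2$, via a sharper argument, but this is irrelevant since only a $\lesssim$ estimate is claimed). The substance of the argument is the convexity and extreme-point reduction in the real case.
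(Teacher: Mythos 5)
Your argument is correct: the convexity of $b \mapsto \E\|\sum_n \varepsilon_n b_n x_n\|_X$, the reduction to the vertices of $[-1,1]^N$, the distributional symmetry $(\sigma_n\varepsilon_n)_n \sim (\varepsilon_n)_n$, and the real-to-complex splitting (losing only a harmless factor $2$, versus the sharp $\pi/2$) all go through, and the constant is uniform in $N$ as required. The paper itself does not prove this statement but cites \cite[Theorem 6.1.13]{HNVW17}, whose proof is essentially the same extreme-point/convexity argument you give, so your proposal matches the intended route.
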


\subsection{The UMD property}\label{sec:UMD}

As already mentioned, the most important of our geometric assumptions is the UMD property.
It is natural to assume this property when doing Banach-valued harmonic analysis, as a Banach space $X$ is UMD if and only if the Hilbert transform extends to a bounded operator on $L^p(\RR;X)$ for all $p \in (1,\infty)$ \cite{jB83,dB83}. The classical reflexive function spaces, for example $L^p$-spaces, Sobolev spaces, and Triebel--Lizorkin and Besov spaces, are all UMD. However, there are also important UMD spaces that are not function spaces (or not even Banach lattices); in particular, non-commutative $L^p$-spaces, including the Schatten classes $\mc{C}^p$ (see \cite[Chapter 14]{gP16} and \cite[Appendix D]{HNVW16}). For more exposition on UMD spaces see for example \cite{dB01, gP16, HNVW16}.
We recall one possible definition of the UMD property in terms of Haar decompositions.

For every dyadic interval $J=[m2^{n},(m+1)2^{n}) \subset \RR$, $n,m\in\Z$, define the $L^1$-normalised Haar function
\begin{equation}\label{eq:haar}
  h_{J}:=|J|^{-1}( \1_{J_{0}}-\1_{J_{1}}),
\end{equation}
where $J_0$ and $J_1$ are the left and right halves of $J=J_{0} \cup J_{1}$, i.e.
\begin{equation}
  J_{0} := [m2^{n},(2m+1)2^{n-1}), \; J_{1} := [(2m+1)2^{n-1},(m+1)2^{n} ).
\end{equation}
It is straightforward to see that $\langle h_{J};h_{J'}\rangle=0$ unless $J=J'$, and thus
\begin{equation}
  \label{eq:haar-transform}
    \bigg\| \sum_{J\subset [0,1)}  a_{J} \langle f ; h_{J} \rangle h_{J} |J| \bigg\|_{L^2(I;\CC)} \leq \| f \|_{L^{2}([0,1);\CC)} \qquad \forall f \in L^2([0,1);\CC)
 \end{equation}
 for any finitely-supported sequence of signs $a_J \in \{-1,1\}$, where the sum is over all dyadic intervals $J \subset [0,1)$.
 When $L^2$ is replaced with $L^p$ for some $p \in (1,\infty)$, the estimate \eqref{eq:haar-transform} still holds, with a constant depending on $p$ (although naturally the proof above, being reliant on orthogonality, does not extend to $p \neq 2$).
 This motivates the following definition.
 
\begin{defn}
  A Banach space $X$ has the \emph{UMD property} if there exists $p \in (1,\infty)$ such that for any $f\in L^{p}([0,1),X)$ and any finitely-supported sequence $(a_{J})_{J \subset [0,1)}$ of signs, it holds that 
  \begin{equation}
      \label{eq:haar-transform-UMD}
\bigg\| \sum_{J\subset [0,1)} a_{J}\langle f ; h_{J}\rangle h_{J} |J| \bigg\|_{L^p([0,1);X)} \lesssim \| f \|_{L^{p}([0,1),X)}
  \end{equation}
  where the sum is over all dyadic intervals $J\subset [0,1)$.
\end{defn}

If \eqref{eq:haar-transform-UMD} holds for one $p \in (1,\infty)$, then it holds for all $p \in (1,\infty)$ (with a different constant) and with $[0,1)$ replaced by any dyadic interval (see \cite[Theorems 4.2.7 and 4.2.12]{HNVW16}.

The Haar functions are in fact $2$-Walsh wave packets associated to $T^1\bigl(B_{1}(0)^{2}\bigr)$, so the bound \eqref{eq:haar-transform-UMD} can be interpreted as unconditionality of a tree projection operator. In the $3$-Walsh case, we use the following randomised version of (\ref{eq:haar-transform-UMD}); the proof is a bit harder than the $2$-Walsh case because the tree projections cannot be directly related to martingale transforms.
The idea is to reduce to the tree $T(B_1(0)^2)$ by modulation, translation, and dilation, and then to reduce matters to a result of Cl\'ement et al. \cite{CPSW00} which has already done the hard work of relating $3$-Walsh--Fourier projections to martingale transforms.

\begin{prop}\label{prop:lacunary-tree-proj}
  Let $p \in (1,\infty)$ and $X$ be a UMD Banach space.
  Then for all trees $T$ and all $f \in L^p(I_T;X)$ we have
  \begin{equation}\label{eqn:RUC-E3}
    \E \bigg\| \sum_{\substack{u,v=0\\u\neq v}}^{2}\sum_{\mb{P}\in T^{u}} \varepsilon_{\mb{P}_{v}} \langle f; w_{\mb{P}_v} \rangle w_{\mb{P}_v} |I_{\mb{P}}| \bigg\|_{L^p(I_{T};X)} \lesssim  \|f\|_{L^p(I_T;X)}.
  \end{equation}
\end{prop}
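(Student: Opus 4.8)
The plan is to reduce \eqref{eqn:RUC-E3} to the UMD inequality \eqref{eq:haar-transform-UMD} by exploiting the symmetries of the problem and the identification of $3$-Walsh tree projections with martingale transforms. First I would reduce to the model tree $T_0 := T(B_1(0)^2)$ with top tritile $\mb{P}_{T_0}$ having $I_{\mb{P}_{T_0}} = B_1(0)$, $\omega_{\mb{P}_{T_0}} = B_3(0)$. Indeed, given an arbitrary tree $T$ with top $\mb{P}_T$, there is a composition of translation, dilation, and modulation operators that maps $\mb{P}_{T_0}$ to $\mb{P}_T$; since the operators $\Tr_y$, $\Dil_{3^n}$, $\Mod_\eta$ act isometrically on $L^p(\WW;X)$ (after the usual $L^1$-normalisation) and map tiles to tiles preserving the order relation $\leq$, they map $T_0$ bijectively onto $T$ and intertwine the wave-packet projections. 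Using that the distribution of $(\varepsilon_{\mb{P}_v})$ is invariant under this relabelling of tiles, the left-hand side of \eqref{eqn:RUC-E3} for $T$ equals that for $T_0$ applied to a transformed function of the same $L^p(I_T;X)$ norm. So it suffices to prove \eqref{eqn:RUC-E3} for $T = T_0$.

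Next, for the model tree, I would unwind what the tiles $\mb{P}_v$ with $\mb{P} \in T_0^u$ are: the tritiles $\mb{Q} \leq \mb{P}_{T_0}$ are indexed by the triadic subintervals $I_{\mb{Q}} \subseteq B_1(0)$, with $\omega_{\mb{Q}}$ the unique frequency interval of length $3/|I_{\mb{Q}}|$ containing $\omega_{T_0}$. The constraint $\omega_{T_0} \cap \omega_{\mb{Q}_u} \neq \emptyset$ singles out, for each scale, a particular child tile $\mb{Q}_u$; as $u$ ranges over $\{0,1,2\}$ and $v \neq u$, the tiles $\mb{Q}_v$ appearing in \eqref{eqn:RUC-E3} are precisely the tiles whose wave packets are the "Walsh--Fourier differences" at each triadic scale — i.e. $\langle f; w_{\mb{Q}_v}\rangle w_{\mb{Q}_v}|I_{\mb{Q}}|$ for $v$ ranging over the two "non-central" children. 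Summed over $v$ this is exactly the martingale difference of $f$ at the corresponding step of the triadic filtration on $B_1(0)$ (conditional expectation onto scale $|I_{\mb{Q}}|/3$ minus conditional expectation onto scale $|I_{\mb{Q}}|$), modulo the modulation by $\exp_{\xi_{T_0}}$. This is the content of the reduction described in the paragraph preceding the proposition, and it is where I would cite \cite{CPSW00}: they establish that the $\pw$-Walsh--Fourier projections onto fixed frequency blocks are given by martingale transforms with respect to the $\pw$-adic filtration.

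The remaining step is then a randomised ($R$-bounded) version of the UMD martingale transform inequality. Writing $D_j f$ for the $j$-th triadic martingale difference of $f$ on $I_{T_0}$, the left-hand side of \eqref{eqn:RUC-E3} becomes (up to the harmless modulation, which is an isometry on $L^p(I_{T_0};X)$ and can be absorbed)
\begin{equation*}
  \E \Bigl\| \sum_j \varepsilon_j' D_j f \Bigr\|_{L^p(I_{T_0};X)},
\end{equation*}
possibly after grouping the two non-central children at each scale and invoking Kahane's contraction principle (Theorem \ref{thm:contraction-principle}) to replace the two Rademachers $\varepsilon_{\mb{Q}_v}$, $v \in \{0,1,2\}\setminus\{u\}$, by a single one with bounded coefficients; one also uses the triangle inequality in $\ell^1_u$ over the three choices of $u$, each contributing an $R$-bound of comparable size. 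By the Kahane--Khintchine inequality (Theorem \ref{thm:kahane-khintchine}) we may take the $L^p(\Omega)$ norm instead of the $L^1(\Omega)$ norm, and then Fubini's theorem turns this into $\|\sum_j \varepsilon_j' D_j f\|_{L^p(I_{T_0}\times\Omega;X)}$; for each fixed sign pattern $(\varepsilon_j'(\omega))$ the map $f \mapsto \sum_j \varepsilon_j'(\omega) D_j f$ is a martingale transform with symbols in $\{-1,+1\}$, hence bounded on $L^p(I_{T_0};X)$ by the UMD constant uniformly in $\omega$. Averaging over $\omega$ gives the bound $\lesssim \|f\|_{L^p(I_{T_0};X)}$, completing the proof.

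I expect the main obstacle to be the second step: carefully matching the combinatorics of the tree components $T^u$ against the triadic martingale differences, i.e. verifying that the tiles $\{\mb{P}_v : \mb{P}\in T^u,\ v\neq u\}$ organise exactly into the non-trivial frequency blocks at each scale so that their wave-packet projections assemble into genuine martingale differences (this is exactly why the $3$-Walsh case is harder than the $2$-Walsh case, where the Haar functions \emph{are} the wave packets outright). The translation/dilation/modulation reduction in step one and the $R$-bounded UMD estimate in step three are routine given \eqref{eq:haar-transform-UMD} and the results of \cite{CPSW00}.
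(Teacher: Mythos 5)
Your overall strategy (reduce to the model tree by translation/dilation/modulation symmetry, identify the lacunary tile projections with Walsh--Fourier block projections, conclude from UMD) is the same as the paper's, and your first reduction is essentially correct (modulo the small point that the symmetries introduce unimodular constants in front of the wave packets, which the paper removes with the contraction principle). However, your third step has a genuine gap. The left-hand side of \eqref{eqn:RUC-E3} carries \emph{independent} Rademacher signs on the two non-central child tiles at each scale, and you propose to ``group the two non-central children at each scale and invoke Kahane's contraction principle to replace the two Rademachers by a single one with bounded coefficients,'' so as to arrive at a randomised martingale transform $\E\|\sum_j \varepsilon_j' D_j f\|$. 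The contraction principle does not do this: it only permits inserting bounded scalar coefficients into a fixed Rademacher sum, and the inequality
\begin{equation*}
  \E\Big\|\sum_j \big(\varepsilon_j^{(1)} P_j^{(1)}f+\varepsilon_j^{(2)} P_j^{(2)}f\big)\Big\|_{L^p(I_T;X)}
  \lesssim
  \E\Big\|\sum_j \varepsilon_j\big(P_j^{(1)}+P_j^{(2)}\big)f\Big\|_{L^p(I_T;X)}
\end{equation*}
for a splitting of the martingale difference $D_j=P_j^{(1)}+P_j^{(2)}$ is exactly what needs proof; no contraction-principle argument sees the frequency structure of the two blocks. Equivalently, if you first separate the two values of $v$ by the triangle inequality (as the paper does), each piece is a randomised sum of projections $S_n$ onto a \emph{single} frequency block $B_{3^{-n}}(3^{-n})$, which in the $3$-Walsh case is not a martingale difference, so the plain UMD martingale transform bound \eqref{eq:haar-transform-UMD} does not apply to it directly. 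This is precisely the point the paper flags (``the tree projections cannot be directly related to martingale transforms'') and the reason it invokes \cite[Corollary 4.4]{CPSW00}, which states that these finer Walsh--Fourier projections form an unconditional Schauder decomposition of $L^p(\WW;X)$ for UMD $X$ --- not, as you write, that they ``are given by martingale transforms.''

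To close the gap you would either cite \cite[Corollary 4.4]{CPSW00} for the randomised unconditionality of the block projections $S_n$ (after the reindexing step: for fixed $x\in I_T$ at most one tile per scale contributes, so the per-tile Rademachers may be replaced by per-scale ones, an observation the paper makes explicitly and which you should not skip), or give an independent argument, e.g.\ writing $S_n f=\exp_{\eta_n}\,\E_n\big(\overline{\exp_{\eta_n}}\,D_{n+1}f\big)$ for a suitable character $\exp_{\eta_n}$ and combining the contraction principle (to strip the unimodular factors pointwise in $x$) with Stein's inequality for conditional expectations in UMD spaces, and only then the randomised martingale difference bound. As written, the key estimate is asserted to follow from the contraction principle plus \eqref{eq:haar-transform-UMD}, and that deduction is not valid.
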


\begin{proof}
  First we reduce to consideration of the tree $T_1 := T(B_1(0)^2)$.
  Fix an arbitrary tree $T$.
  Define the `lacunary tiles' associated with $T$ to be the set of tiles
  \begin{equation*}
    T^{\mathrm{lac}}:=\bigcup_{\substack{u,v=0\\u\neq v}}^{2}\{\mb{P}_{v} : \mb{P}\in T^{u} \} =     \bigcup_{\substack{v=0}}^{2}\{\mb{P}_{v} : \mb{P}\in T, \, \xi_{T}\notin \omega_{\mb{P}_{v}}\}.
  \end{equation*}
  The lacunary tiles associated with $T$ can be related to those associated with $T_1$ by the relation
  \begin{equation*}
    \Dil_{|I_{T}|^{-1}}\Tr_{-x_{T}}\Mod_{-\xi_{T}} T^{\mathrm{lac}} =T_1^{\mathrm{lac}},  
  \end{equation*}
  with dilation, translation, and modulation operators acting on tiles as in Remark \ref{rmk:wave-packets-tiles}.
  Applying these operators to the wave packets appearing in \eqref{eqn:RUC-E3} we obtain 
  \begin{align*}
    & \Dil_{|I_{T}|^{-1}}\Tr_{-x_{T}}\Mod_{-\xi_{T}} \sum_{\substack{u,v=0\\u\neq v}}^{2}\sum_{\mb{P}\in T^{u}} \varepsilon_{\mb{P}_{v}} \langle f; w_{\mb{P}_v} \rangle w_{\mb{P}_v} |I_{\mb{P}}|
    \\
    &= \sum_{P\in T^{\mathrm{lac}}} \varepsilon_{P} \langle f; w_{P} \rangle \, \Dil_{|I_{T}|^{-1}}\Tr_{-x_{T}}\Mod_{-\xi_{T}}  w_{P} \,|I_{P}|
    \\
    &
      = \sum_{ P\in T_1^{\mathrm{lac}} } a_{P}\varepsilon_{P} \langle \Dil_{|I_{T}|^{-1}}\Tr_{-x_{T}}\Mod_{-\xi_{T}}f; w_{P} \rangle   \,w_{P}\, |I_{P}|
  \end{align*}
  for some unimodular constants $a_{P}$.
  Supposing that \eqref{eqn:RUC-E3} holds for $T_1$, the contraction principle (\ref{eq:contraction-principle}) yields that 
  \begin{align*}
    &|I_{T}|^{-1/p'}\E \bigg\| \sum_{\substack{u,v=0\\u\neq v}}^{2}\sum_{\mb{P}\in T^{u}} \varepsilon_{\mb{P}_{v}} \langle f; w_{\mb{P}_v} \rangle w_{\mb{P}_v} |I_{\mb{P}}| \bigg\|_{L^p(I_{T};X)}
    \\
    &= \E \bigg\| \sum_{\substack{u,v=0\\u\neq v}}^{2}\sum_{\mb{P}\in T_1^{u}} a_{P_{v}}\varepsilon_{\mb{P}_{v}} \langle \Dil_{|I_{T}|^{-1}}\Tr_{-x_{T}}\Mod_{-\xi_{T}} f; w_{\mb{P}_v} \rangle w_{\mb{P}_v} |I_{\mb{P}}| \bigg\|_{L^p(B_{1}(0);X)}
    \\
    &\lesssim \|   \Dil_{|I_{T}|^{-1}}\Tr_{-x_{T}}\Mod_{-\xi_{T}} f \|_{L^p(B_{1}(0);X)}
      =|I_{T}|^{-1/p'}\|  f \|_{L^p(I_{T};X)}.
  \end{align*}
  
  Thus it suffices to show \eqref{eqn:RUC-E3} for the tree $T = T_1$.
  For this tree, only the $0$-part is nontrivial; i.e. $T_1=T_1^{0}$.
  Let us show the bound restricted to the summand corresponding to $v=1$; the bound for the $v=2$ summand is shown in the same way, and one combines these summands using the triangle inequality.
  Using the Kahane--Khintchine inequality (Theorem \ref{thm:kahane-khintchine}) and Fubini one has
  \begin{align*}
    &\Biggl(  \E \bigg\| \sum_{\mb{P}\in T^{0}} \varepsilon_{\mb{P}_{1}} \langle f; w_{\mb{P}_1} \rangle w_{\mb{P}_1} |I_{\mb{P}}| \bigg\|_{L^p(B_{1}(0);X)}\Biggr)^{p} \\
    &\simeq \E \bigg\| \sum_{\mb{P}\in T^{0}} \varepsilon_{\mb{P}_{1}} \langle f; w_{\mb{P}_1} \rangle w_{\mb{P}_1} |I_{\mb{P}}| \bigg\|_{L^p(B_{1}(0);X)}^{p}
    \\
    &
      = \int_{B_{1}(0)} \int_\Omega \bigg\| \sum_{\mb{P}\in T^{0}} \varepsilon_{\mb{P}_{1}}(\omega) \langle f; w_{\mb{P}_1} \rangle w_{\mb{P}_1}(x) |I_{\mb{P}}| \bigg\|_{X}^{p} \, \dd \omega \, \dd x
    \\
    &= \int_{B_{1}(0)} \int_\Omega \bigg\| \sum_{\mb{P}\in T^{0}} \varepsilon_{|I_{\mb{P}}|}(\omega) \langle f; w_{\mb{P}_1} \rangle w_{\mb{P}_1}(x) |I_{\mb{P}}| \bigg\|_{X}^{p} \, \dd \omega \, \dd x
    \\
    &\simeq  \E \bigg\| \sum_{n=0}^{\infty}\varepsilon_{n}\hspace{-1em}\sum_{\substack{\mb{P}\in T^{0}\\ |I_{\mb{P}}|=3^{-n}}}  \langle f; w_{\mb{P}_1} \rangle w_{\mb{P}_1} |I_{\mb{P}}| \bigg\|_{L^p(B_{1}(0);X)}^{p}.
  \end{align*}
  In reindexing the Rademacher variables we used that for each $x \in B_1(0)$ the tiles $\mb{P} \in T^0$ for which $w_{\mb{P}_1}(x) \neq 0$ are in bijective correspondence with the scales $\{|I_\mb{P}| : \mb{P} \in T_0\}$, and thus the two sets of Rademacher variables
  \begin{equation*}
    \{\varepsilon_{\mb{P}_1} : \mb{P} \in T_0, w_{\mb{P}_1}(x) \neq 0\}, \quad \{\varepsilon_{|I_{\mb{P}}|} : \mb{P} \in T_0\}
\end{equation*}
are equally distributed.

For each $n \in \N$, let $S_{n}$ denote the Walsh--Fourier projection onto the interval $B_{3^{-n}}(3^{-n})=\{\xi \in \WW \colon \xi_{-k}=\delta_{-n}(k)\}$, so that
\begin{equation*}
   \E \bigg\| \sum_{\mb{P}\in T^{0}} \varepsilon_{\mb{P}_{1}} \langle f; w_{\mb{P}_1} \rangle w_{\mb{P}_1} |I_{\mb{P}}| \bigg\|_{L^p(B_{1}(0);X)} \simeq \E  \bigg\|\sum_{n=0}^{\infty} \varepsilon_{n} S_{n}f \bigg\|_{L^p(B_{1}(0);X)}.
 \end{equation*}
 To bound this quantity, we use a result of Cl\'ement et al. \cite[Corollary 4.4]{CPSW00}; since $X$ is UMD, this result implies
 \begin{equation}\label{eqn:our-notation}
    \E \bigg\| \sum_{n \geq 1} \varepsilon_n S_{n} f\bigg\|_p \lesssim \|f\|_{L^p(B_1(0);X)} \qquad \forall f \in L^p(B_1(0);X)
  \end{equation}
  and completes the proof.\footnote{
    We briefly show how to deduce \eqref{eqn:our-notation} from \cite[Corollary 4.4]{CPSW00}, assuming familiarity with the notation of \cite[Section 4]{CPSW00}.
    The interval $B_{3^{-n}}(3^{-n})$ can be identified with the set
    \begin{equation*}
      \{\mb{n} \in \WW : \mb{d}_{(n,1)} \leq \mb{n} < \mb{d}_{(n,2)}\},
    \end{equation*}
    and thus the Walsh--Fourier projection $S_{n}$ can be identified with the projection $\Delta_{(n,1)}$.
    Since $X$ is UMD, \cite[Corollary 4.4]{CPSW00} says that the set $\{\Delta_{(n,v)} : (n,v) \in \NN \times \{1,2\}\}$ is an unconditional Schauder decomposition of $L^p(\WW;X)$, and this implies \eqref{eqn:our-notation} by the contraction principle.}
\end{proof}

\begin{rmk}
  With additional work, one could improve the randomised estimate \eqref{eqn:RUC-E3} to full unconditionality (i.e. replacing the Rademacher variables with an arbitrary deterministic choice of signs) by working through \cite[Section 4]{CPSW00}, modifying the martingale difference sequence to take into account orthogonal wave packets at the same scale, as in the proof of unconditionality of the Haar decomposition (see \cite[Theorem 4.2.13]{HNVW16}).
  Since we only need the randomised estimate, we leave this to the hypothetical interested reader.
\end{rmk}

\subsection{$r$-Hilbertian spaces}

For $p,q \in [1,\infty]$ and $\theta \in [0,1]$, we let $[p,q]_\theta \in [1,\infty]$ be the number defined by the relation
\begin{equation*}
  \frac{1}{[p,q]_\theta} = \frac{1-\theta}{p} + \frac{\theta}{q}.
\end{equation*}

\begin{defn}
  Let $r \in [2,\infty)$.
  We say that a Banach space $X$ is \emph{$r$-Hilbertian} if there exists a Hilbert space $H$ and a Banach space $Y$, such that $(H,Y)$ is an interpolation couple, and such that $X$ is isomorphic to the complex interpolation space $[H,Y]_\theta$, with $[2,\infty]_\theta = r$.
\end{defn}

\begin{rmk}
  In \cite{gP16}, $r$-Hilbertian spaces are referred to as $\theta$-Hilbertian.
  In our computations the parameter $r$ plays a more important role, so we prefer to use our terminology.
\end{rmk}

For an introduction to interpolation spaces, see for example \cite{BL78} or \cite[Appendix C]{HNVW16}. Note that if $X$ is $r$-Hilbertian, then $X$ is $s$-Hilbertian for all $s > r$. Every $L^p$-space with $p \in [2,\infty)$, either classical or non-commutative, is $p$-Hilbertian: to see this, note that $L^p = [L^2,L^\infty]_\theta$ with $[2,\infty]_\theta = p$. By the same argument, replacing $L^\infty$ with $L^1$, $L^p$ is $p^\prime$-Hilbertian when $p \in (1,2]$.

$r$-Hilbertian spaces enjoy the following `$r$-orthogonality' of wave packet coefficients, which should be compared to the notions of tile-type and quartile-type in \cite{HL13, HL18, HLP13, HLP14}. It should be noted that this is the only consequence of the $r$-Hilbertian property that we actually use. Thus one could isolate this estimate as a geometric assumption, perhaps called `Walsh tile-type $r$' (although that name is already taken).
However, we do not know how to establish the property without assuming the $r$-Hilbertian property, so we choose not to make this definition.

\begin{prop}[Walsh tile-type]\label{prop:tile-orthogonality}
  Let $X$ be $r$-Hilbertian, then 
  \begin{equation}\label{eqn:tile-orthogonality}
    \biggl( \sum_{P \in A} \|\langle f; w_P \rangle \|_X^r \, |I_{P}|\biggr)^{1/r} \lesssim \|f\|_{L^r(\WW;X)} \qquad \forall f \in L^r(\WW;X),
  \end{equation}
  for any  finite collection $A\subset\PP$ of pairwise disjoint tiles,  with implicit constant independent of $A$.
\end{prop}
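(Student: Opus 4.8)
The plan is to deduce \eqref{eqn:tile-orthogonality} by complex interpolation between the Hilbertian endpoint $r = 2$, where the estimate is a Bessel inequality, and the endpoint $r = \infty$, where it holds trivially for an arbitrary Banach space. Write $X \cong [H,Y]_\theta$ with $H$ a Hilbert space, $Y$ a Banach space, and $\theta \in (0,1)$ chosen so that $[2,\infty]_\theta = r$, that is $1 - \theta = 2/r$. Fix a finite collection $A \subset \PP$ of pairwise disjoint tiles and consider the linear operator
\begin{equation*}
  T_A \colon g \longmapsto \bigl( \langle g ; w_P \rangle \bigr)_{P \in A},
\end{equation*}
which is well defined on $L^1_{\loc}(\WW)$-functions since each $w_P$ lies in $L^1(\WW) \cap L^\infty(\WW)$; we measure its output in the weighted sequence space $\ell^p_w(A;Z)$ of $Z$-valued families $(x_P)_{P\in A}$ normed by $\bigl(\sum_{P\in A}\|x_P\|_Z^p\,|I_P|\bigr)^{1/p}$ (with $\sup_P\|x_P\|_Z$ when $p=\infty$, the weight then playing no role).

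First I would treat the Hilbertian endpoint. Since the tiles of $A$ are pairwise disjoint, the associated wave packets are pairwise orthogonal in $L^2(\WW;\CC)$, and $\| w_P \|_{L^2(\WW)} = |I_P|^{-1/2}$ by \eqref{eq:wave-packet}; hence $\bigl( |I_P|^{1/2} w_P \bigr)_{P \in A}$ is an orthonormal system, and the Bessel inequality for $H$-valued functions gives
\begin{equation*}
  \sum_{P \in A} \| \langle f ; w_P \rangle \|_H^2 \, |I_P| \;\le\; \| f \|_{L^2(\WW;H)}^2 \qquad \forall f \in L^2(\WW;H),
\end{equation*}
i.e.\ $\| T_A \|_{L^2(\WW;H) \to \ell^2_w(A;H)} \le 1$. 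Next, the Banach endpoint: by the $L^1$-normalisation in \eqref{eq:wave-packet-spt}, for any Banach space $Z$ and any $g \in L^\infty(\WW;Z)$ one has $\| \langle g ; w_P \rangle \|_Z \le \int_\WW |w_P| \, \| g \|_Z \le \| g \|_{L^\infty(\WW;Z)}$, so $\| T_A \|_{L^\infty(\WW;Y) \to \ell^\infty(A;Y)} \le 1$; here disjointness is not used.

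Finally I would interpolate. By Calderón's theorem on complex interpolation of weighted vector-valued $L^p$ spaces (see \cite{BL78} or \cite[Appendix C]{HNVW16}), using $1 - \theta = 2/r$ and $r < \infty$, one has
\begin{equation*}
  [ L^2(\WW;H), L^\infty(\WW;Y) ]_\theta = L^r(\WW; X), \qquad [ \ell^2_w(A;H), \ell^\infty(A;Y) ]_\theta = \ell^r_w(A; X),
\end{equation*}
where in the second identity the weight on the target is again $|I_P|$ because the $\ell^\infty$ endpoint carries weight $1$ and $|I_P|^{(1-\theta)/2} = |I_P|^{1/r}$. Complex interpolation of the operator $T_A$ between its two endpoint bounds then yields $\| T_A \|_{L^r(\WW;X) \to \ell^r_w(A;X)} \lesssim 1$, with implicit constant depending only on $r$ and on the isomorphism constant of $X \cong [H,Y]_\theta$ — in particular, not on $A$ nor on $f$ — and unwinding the definition of the $\ell^r_w(A;X)$-norm gives exactly \eqref{eqn:tile-orthogonality}. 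I expect the only point genuinely requiring care to be the interpolation identities at the $L^\infty$ endpoint in the vector-valued and weighted setting; by contrast, the conjugation bookkeeping needed to phrase the $r = 2$ case as a Bessel inequality for the bilinear pairing $\langle\,\cdot\,;\,\cdot\,\rangle$ is routine, and disjointness of the tiles enters only through the orthogonality of the wave packets at that endpoint.
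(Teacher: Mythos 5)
Your proposal is correct and follows essentially the same route as the paper: Plancherel/Bessel at the $L^2(\WW;H)$ endpoint, the trivial sup bound at the $L^\infty(\WW;Y)$ endpoint, and complex interpolation of the weighted vector-valued sequence spaces (the paper handles the $L^\infty$-endpoint subtlety you flag by working with $\mathring{L}^\infty(\WW;Y)$, the closure of the Schwartz functions, and citing Triebel for both interpolation identities).
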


\begin{proof}
  Suppose $X$ is isomorphic to $[H,Y]_\theta$, where $H$ is a Hilbert space, $Y$ is a Banach space, and $[2,\infty]_\theta = r$. Let $\mathring{L}^\infty(\WW;Y)$ denote the closure of the Schwartz functions $\Sch(\WW;Y)$ in $L^\infty(\WW;Y)$.  A straightforward estimate yields
  \begin{equation*}
   \sup_{P \in A} \|\langle f; w_P \rangle\|_{Y}\leq \|f\|_{L^\infty(\WW;Y)} \qquad \forall f \in \mathring{L}^\infty(\WW;Y),
  \end{equation*}
  while Plancherel's theorem yields
  \begin{equation*}
    \biggl( \sum_{P \in A} \|\langle f; w_P \rangle\|_{H}^{2}\,|I_{P}|\biggr)^{1/2} \leq \|f\|_{L^2(\WW;H)} \qquad \forall f \in L^2(\WW;H). 
  \end{equation*}
  The desired inequality follows by complex interpolation, with all sequence spaces on $A$ weighted by $P \mapsto |I_P|$, 
  \begin{equation*}
    \ell^r(A;X)
    \cong \ell^r(A;[H,Y]_\theta)
    = [\ell^2(A;H), \ell^\infty(A;Y)]_\theta
  \end{equation*}
  (see \cite[\textsection 1.18.1, Remark 2]{hT78} for the equality at the end, and \cite[\textsection 1.18.4, Remark 3]{hT78} for interpolation between $L^2$ and $\mathring{L}^\infty$). 
\end{proof}

\begin{rmk}\label{rmk:Qiu}
  It is natural to suspect that if a Banach space $X$ is $r$-Hilbertian for some $r < \infty$, then it must be UMD.  This is false; a counterexample is given by Qiu's construction (see \cite[\textsection 4.3.c]{HNVW16} and \cite{yQ12}).\footnote{We thank Mark Veraar for pointing this example out to us.}  For all $r \in [2,\infty]$, and $k \in \N$, inductively define spaces
  \begin{equation*}
    X_0^r := \ell^2_2(\ell^r_2), \qquad X_{k+1}^r := \ell_2^2(\ell_2^r(X_{k+1}))
  \end{equation*}
  (here $\ell^r_2(Y) := \ell^r(\{0,1\};Y)$, where $\{0,1\}$ is equipped with counting measure).
  Then set $X^r := \oplus^r_{n \in \N} X_k^r$. For all $r \neq 2$, $X^r$ is not UMD, while $X^r = [X^2,X^\infty]_\theta$ is $r$-Hilbertian.
\end{rmk}


 \section{Outer-$L^p$ spaces}
 \label{sec:outermeasures}
 In this section we introduce outer structures and their associated outer-$L^p$ quasinorms.
Roughly speaking, an outer structure on a topological space consists of an outer measure on the space, a Banach space $X$, and a \emph{size} on $X$-valued functions on the topological space.
Currently the standard references on this topic are the initial work by Do and Thiele \cite{DT15}, and the first Banach-valued implementation by Di Plinio and Ou \cite{DPO18}.
However, the outer-$L^p$ concept is still quite new, and the terminology and definitions are not fixed.
Our interpretation of the theory differs slightly (but not fundamentally) from what appears in the literature.
In Sections \ref{sec:particular-outer} and \ref{sec:size-dom} we analyse particular outer structures that are relevant to our problem.

\subsection{Initial definitions}

For a topological space $\XX$ we let $\Bor(\XX)$ denote the $\sigma$-algebra of Borel sets in $\XX$, and for a Banach space $X$ we let $\Bor(\XX;X)$ denote the set of strongly Borel measurable functions $\XX \to X$.
Recall that a \emph{Polish space} is a topological space that is homeomorphic to a complete separable metric space.
This is a technical assumption that will ultimately play no role in this paper, as we only really care about the countable space $\tPP$ with the discrete topology.

\begin{defn}[Outer structure]\label{defn:outer-measure-structure}
  Let $\XX$ be a Polish space.
  An \emph{outer structure} on $\XX$, or simply an \emph{outer structure}, consists of the following data:
  \begin{itemize}
  \item a collection $\EE \subset \Bor(\XX)$ of \emph{generating sets}, 
  \item a function $\map{\sigma}{\EE}{[0,\infty)}$, called the \emph{premeasure},
  \item a Banach space $X$,
  \item an \emph{$X$-size} (or simply a \emph{size}) $S$ on $(\XX,\EE)$; that is, a family of maps indexed by $E\in\EE$
    \begin{equation*}
      \Bor(\XX;X) : F \mapsto \| F \|_{S(E)} \in [0,\infty] \qquad \forall E \in \EE
    \end{equation*}
    such that there exists a constant $C \geq 1$ satisfying the following properties for all $E \in \EE$ and $F,G \in \Bor(\XX;X)$:
    \begin{description}
    \item[unconditionality] $\| \1_{A} F \|_{S(E)} \leq C \| F \|_{S(E)}$ for all
      \begin{equation*}
        A\in\EE^{\cup}=\Bigl\{ A \colon A=\bigcup_{n\in\N}E_{n} \text{ with } E_{n}\in\EE \Bigr\}.
      \end{equation*}
    \item[homogeneity] $\| \lambda F \|_{S(E)} = |\lambda| \| F \|_{S(E)}$ for all $\lambda\in\CC$;
    \item[quasi-triangle inequality] $\| F+G \|_{S(E)} \leq C(\| F \|_{S(E)} + \| G \|_{S(E)})$;
    \item[nondegeneracy] $\| F \|_{S(E)}=0$ for all $E\in\EE$ if and only if $F=0$.
    \end{description}
    That is, the maps $\|\cdot\|_{S(E)}$ are (possibly infinite) quasinorms on $E$, with quasinorm constant uniformly bounded in $E \in \EE$, and with an additional unconditionality property.\footnote{The unconditionality property can be interpreted as a monotonicity property when $X = \CC$, or more generally when $X$ is a Banach lattice.}
  \end{itemize}

  Given an outer structure on $\XX$ as above, we define the induced outer measure $\map{\sigma}{\mc{P}(\XX)}{[0,\infty]}$ (which we denote by the same letter as the premeasure) by
  \begin{equation*}
    \sigma(A) :=  \inf \Bigl\{\sum_{E\in\mb{E}\subset \EE } \sigma(E) : \bigcup_{E\in \mb{E}} E \supset A \Bigr\}
    \qquad \forall A \subset \XX
  \end{equation*}
  where the infimum is taken over all countable covers $\mb{E}$ of $A$ by generating sets.
  For all $f \in \Bor(\XX;X)$ we define $\| f \|_S := \sup_{E \in \EE} \| f \|_{S(E)}$, and for all $\lambda > 0$ we define the outer superlevel measure
  \begin{equation*}
    \sigma(\| f \|_{S} > \lambda) := \inf\{\sigma(A) : A \subset \XX, \| \1_{\XX \sm A}f \|_{S}  \leq \lambda\}.
  \end{equation*}
\end{defn}

Different choices of sizes lead to fundamentally different outer structures, even when the outer measure and the Banach space remain fixed.
Thus we consider the size (and the underlying Banach space) as a component of the outer structure.

To each outer structure is associated a family of quasinorms, defined in a way that mimics the so-called layer cake representation of the $L^p$ norm.

\begin{defn}[Outer-$L^{p}$ quasinorms]
  Let $\XX$ be a Polish space, and let $(\EE,\sigma,X,S)$ be an outer structure on $\XX$.
  For all $p \in (0,\infty)$ we define the \emph{outer-$L^p$ quasinorms} and \emph{weak outer-$L^p$ quasinorms} of a function $f\in\Bor(\XX;X)$ by setting
  \begin{align*}
    &  \|F\|_{L_{\sigma}^{p} S} := \bigg( \int_{0}^{\infty} p\lambda^{p-1} \sigma(\| F \|_{S} > \lambda) \, \dd \lambda \bigg)^{1/p}  &&\forall p\in(0,\infty), \\
    &  \|F\|_{L_{\sigma}^{p,\infty} S} := \sup_{\lambda > 0} \lambda \,\sigma(\| F \|_{S} > \lambda)^{1/p} &&\forall p\in(0,\infty), \\
    &\|F\|_{L_{\sigma}^{\infty} S} := \| F \|_{S}. 
  \end{align*}
  It is straightforward to check that these are indeed quasinorms. 
\end{defn}

A H\"older-type inequality holds for outer-$L^p$ spaces defined with respect to different sizes provided that it holds in a certain sense for the sizes themselves. The proof below is a straightforward extension of that of \cite[Proposition 3.4]{DT15}.

\begin{prop}[Outer Hölder inequality]\label{prop:outer-holder}
  Let $\XX$ be a Polish space.
  For each $u \in \{0,1,2\}$ let $(\EE,\sigma,X_u,S_u)$ be an outer structure on $\XX$, and let $(\EE,\sigma,X,S)$ be another outer structure on $\XX$.
  Note that all these outer structures have the same generating sets and premeasure.
  Let $\map{\Pi}{X_0 \times X_1 \times X_2}{X}$ be a bounded trilinear map, and suppose that the size-H\"older inequality
  \begin{equation}
    \label{eq:abstract-size-holder}
    \| \Pi(F_{0},F_{1},F_{2}) \|_{S} \lesssim \prod_{u=0}^{2} \| F_{u} \|_{S_{u}} \qquad \forall F_u \in \Bor(\XX;X_u)
  \end{equation}
  holds.
  Then for all $p_u \in [1,\infty]$ we have the outer H\"older inequality
    \begin{equation}
    \label{eq:abstract-Lp-holder}
    \| \Pi(F_{0},F_{1},F_{2}) \|_{L_\sigma^{p} S} \lesssim_{p_0,p_1,p_2} \prod_{u=0}^{2} \| F_{u} \|_{L_{\sigma}^{p_{u}} S_{u}} \qquad \forall F_u \in \Bor(\XX;X_u)
  \end{equation}
  with $p^{-1}=\sum_{u=0}^{2}p_{u}^{-1}$.
\end{prop}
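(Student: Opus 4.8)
The plan is to follow the classical layer-cake argument from \cite[Proposition 3.4]{DT15}, adapted to our more general sizes. First I would dispose of the endpoint cases: if any $p_u = \infty$, the corresponding factor $\|F_u\|_{L^\infty_\sigma S_u} = \|F_u\|_{S_u}$ is just a supremum of sizes, and the argument simplifies, so the main work is when all $p_u \in [1,\infty)$. In that case the key structural fact to exploit is that for any $A \subset \XX$ one has the pointwise-in-$E$ factorisation
\begin{equation*}
  \|\1_{\XX \sm A}\,\Pi(F_0,F_1,F_2)\|_{S(E)} \lesssim \prod_{u=0}^2 \|\1_{\XX \sm A}\,F_u\|_{S_u(E)} \lesssim \prod_{u=0}^2 \|\1_{\XX \sm A}\,F_u\|_{S_u},
\end{equation*}
which comes from applying the hypothesised size-Hölder inequality \eqref{eq:abstract-size-holder} to the functions $\1_{\XX\sm A}F_u$ (noting that $\Pi(\1_{\XX\sm A}F_0,\1_{\XX\sm A}F_1,\1_{\XX\sm A}F_2) = \1_{\XX\sm A}\Pi(F_0,F_1,F_2)$ by trilinearity, and that multiplication by $\1_{\XX\sm A}$ is harmless since $\XX \sm A$ is a Borel set — here one may need the unconditionality axiom, or work with $A \in \EE^\cup$ and pass to the infimum). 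Taking the outer superlevel measure, this gives the superlevel-set inclusion
\begin{equation*}
  \sigma\bigl(\|\Pi(F_0,F_1,F_2)\|_S > \lambda_0\lambda_1\lambda_2 C\bigr) \leq \min_{u} \sigma\bigl(\|F_u\|_{S_u} > \lambda_u\bigr)
\end{equation*}
whenever $\prod_u \lambda_u$ exceeds the product of the superlevel thresholds; more precisely, if $A_u$ is nearly optimal for $\sigma(\|F_u\|_{S_u} > \lambda_u)$, then $A = A_0 \cup A_1 \cup A_2$ witnesses the left-hand superlevel measure via subadditivity of $\sigma$, and one uses $\sigma(A) \le \sigma(A_0) + \sigma(A_1) + \sigma(A_2) \le 3\min_u \sigma(A_u)$ together with a normalisation choosing each $\lambda_u$ proportional to $\lambda^{p/p_u}$.

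Second, I would run the standard real-interpolation-style bookkeeping: normalise so that $\|F_u\|_{L^{p_u}_\sigma S_u} = 1$ for each $u$, split each $F_u$ at height $\lambda^{p/p_u}$ into a ``small'' part (size $\le \lambda^{p/p_u}$ off a set of controlled outer measure) and a ``large'' part, expand $\Pi(F_0,F_1,F_2)$ into $2^3$ terms by trilinearity, and estimate the outer superlevel measure $\sigma(\|\Pi(F_0,F_1,F_2)\|_S > C\lambda)$ by distributing the total height $\lambda$ among the three factors. The term with all three ``small'' parts has size $\lesssim \lambda^{p(1/p_0+1/p_1+1/p_2)} = \lambda$ off a controlled set, and every term involving at least one ``large'' part contributes an outer measure bounded by $\sigma(\|F_u\|_{S_u} > c\lambda^{p/p_u})$ for the relevant index $u$. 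Integrating $p\lambda^{p-1}$ against these superlevel measures and changing variables $\mu = \lambda^{p/p_u}$ converts each contribution into $\int_0^\infty p_u \mu^{p_u - 1}\sigma(\|F_u\|_{S_u} > c\mu)\,\dd\mu \lesssim \|F_u\|_{L^{p_u}_\sigma S_u}^{p_u} = 1$, which after taking $p$-th roots yields \eqref{eq:abstract-Lp-holder}. The only genuinely delicate point is making the quasi-triangle and unconditionality constants $C$ (which appear when splitting each $F_u$ and when combining the $2^3$ terms) track correctly — they are uniformly bounded by hypothesis, so they produce only a finite multiplicative loss depending on $p_0,p_1,p_2$ — and verifying that multiplication by the indicator $\1_{\XX\sm A}$ of a set $A$ arising as a countable union of generating sets genuinely decreases (up to the constant $C$) each size, which is precisely the content of the unconditionality axiom. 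I expect the superlevel-set splitting together with this constant-tracking to be the main obstacle; everything else is the routine layer-cake computation already carried out in \cite{DT15}.
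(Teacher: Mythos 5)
Your proposal is correct and follows essentially the paper's own route (the layer-cake scheme of \cite[Proposition 3.4]{DT15}): pick near-optimal superlevel sets $A_u$ at heights distributed as $\lambda^{p/p_u}$, restrict all three functions off $A_0\cup A_1\cup A_2$, apply the size-H\"older hypothesis together with $\Pi(\1_{\XX\sm A}F_0,\1_{\XX\sm A}F_1,\1_{\XX\sm A}F_2)=\1_{\XX\sm A}\Pi(F_0,F_1,F_2)$, and integrate in $\lambda$ via the change of variables $\mu=\lambda^{p/p_u}$ --- the paper merely discretises the levels dyadically ($2^n$, $n\in\Z$) rather than working with continuous $\lambda$, and it never needs your $2^3$-term expansion, since restricting every factor to $\XX\sm A$ already removes the ``large'' contributions. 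One slip: the claimed bound $\sigma(A_0)+\sigma(A_1)+\sigma(A_2)\le 3\min_u\sigma(A_u)$ is false (it should be the sum, or $3\max_u\sigma(A_u)$), but this is harmless because your final layer-cake integration treats the three contributions $\sigma(\|F_u\|_{S_u}>c\lambda^{p/p_u})$ separately and sums them, which is exactly what the subadditivity bound $\sigma(A)\le\sum_{u}\sigma(A_u)$ provides.
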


\begin{proof}
  Assume that the factors on the right hand side of \eqref{eq:abstract-Lp-holder} are finite and non-zero, for otherwise there is nothing to prove.
  By homogeneity we may assume that $\| F_{u} \|_{L^{p_{u}}_{\sigma}S_{u}}^{p_{u}} =1$ for each $u$.
  For each $u\in\{0,1,2\}$  and $n\in\Z$ let $A_{n}^{u}\subset\XX$ be such that
  \begin{equation*}
    \sum_{n\in\Z} 2^{n} \sigma(A_{n}^{u}) \lesssim 1 \qquad \| \1_{\XX\setminus A_{n}^{u}} F_{u}\|_{S_{u}}\lesssim2^{n/p_{u}}.
  \end{equation*}
  We may assume that $A_{n}^{u}\subset A_{n-1}^{u}$ by considering $\tilde{A}_{n}^{u}=\bigcup_{k\geq n} A_{n}^{u}$ and noticing that $\tilde{A}_{n}^{u}$ satisfies the conditions above.
  Let $A_{n}=\bigcup_{u=0}^{2}A_{n}^{u}$.
  Then it holds that
  \begin{equation*}
    \sum_{n\in\Z} 2^{n} \sigma(A_{n}) \lesssim \sum_{n\in\Z} \sum_{u=0}^{2}2^{n} \sigma(A_{n}^{u})\lesssim 1,
  \end{equation*}
  and from \eqref{eq:abstract-size-holder} it follows that
  \begin{equation*}
    \| \1_{\XX\setminus A_{n}} \Pi(F_{0},F_{1},F_{2})\|_{S}=
    \| \Pi( \1_{\XX\setminus A_{n}}F_{0}, \1_{\XX\setminus A_{n}}F_{1}, \1_{\XX\setminus A_{n}}F_{2})\|_{S}\lesssim2^{n/p},
  \end{equation*}
  which concludes the proof.
\end{proof}

It is possible to control classical $L^1$ norms by outer-$L^1$ quasinorms, by the following Radon--Nikodym-type domination principle.
For the proof in the case $X = \CC$, which extends to general Banach spaces, see \cite[Lemma 2.2]{gU16} and \cite[Proposition 3.6]{DT15}

\begin{prop}[Radon--Nikodym-type domination]\label{prop:outer-RN}
  Let $\XX$ be a Polish space, and let $(\EE,\sigma,X,S)$ be an outer structure on $\XX$ such that $\XX = \bigcup_{i \in \NN} E_i$ for some countable sequence of generating sets $E_i \in \EE$.
  If $\mathfrak{m}$ is a positive Borel measure on $\XX$ such that
  \begin{equation*}
    \int_E \| F(x) \|_{X} \, \dd \mf{m} (x) \lesssim \| F \|_{S(E)} \sigma(E) \qquad \forall E \in \EE, \, \forall F \in \Bor(\XX;X)
  \end{equation*}
  and
  \begin{equation*}
    \sigma(A) = 0 \, \Rightarrow \, \mf{m}(A) = 0 \qquad  \forall A \in \Bor(\XX), 
  \end{equation*}
  then 
  \begin{equation*}
     \int_\XX \| F(x) \|_{X} \, \dd \mf{m}(x)  \lesssim \|F\|_{L^1_\sigma S} \qquad \forall F \in \Bor(\XX;X).
  \end{equation*}
\end{prop}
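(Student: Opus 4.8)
The plan is to reproduce the classical layer-cake proof of the Radon--Nikodym theorem, with the superlevel sets of a function replaced by the outer superlevel sets of $\|F\|_S$ at the dyadic heights $2^k$, $k \in \Z$. We may assume $M := \|F\|_{L^1_\sigma S} < \infty$. First I would record two reductions. Via the hypothesis $\XX = \bigcup_i E_i$ and monotone convergence, it suffices to bound $\int_E \|F\|_X \, \dd\mf{m}$ (with constant uniform in $E$) for $E$ ranging over finite unions of the generating sets $E_i$; in particular such $E$ have \emph{finite} premeasure. Second, I would note the elementary comparison $\sum_{k\in\Z} 2^k\, \sigma(\|F\|_S > 2^k) \lesssim M$, obtained by estimating the layer-cake integral defining $M$ over dyadic intervals and using that $\lambda \mapsto \sigma(\|F\|_S > \lambda)$ is nonincreasing.

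For each $k$, the definition of the outer superlevel measure gives a Borel set $A_k \subset \XX$ (a countable union of generating sets) with $\|\1_{\XX \sm A_k}F\|_S \leq 2^k$ and $\sigma(A_k) \leq \sigma(\|F\|_S > 2^k) + \eta\, 2^{-k-|k|}$, where $\eta > 0$ is fixed until the end. A point worth stressing is that there is no need to arrange the family $(A_k)_k$ to be nested (which would in any case spoil the control on the premeasure of the relevant unions): one simply observes that every $x \in \XX$ lies in $\bigcap_k A_k$, or in $\bigcap_k A_k^c$, or else passes through a transition, i.e. $x \in A_k \sm A_{k+1}$ or $x \in A_k \sm A_{k-1}$ for some $k$. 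Thus $E$ is covered by $E \cap \bigcap_k A_k$, by $E \cap \bigcap_k A_k^c$, and by $\bigcup_k (A_k \sm A_{k+1}) \cup \bigcup_k(A_k \sm A_{k-1})$. The first set is $\sigma$-null, since $\sigma(\|F\|_S > 2^k) \to 0$ and $2^{-k-|k|} \to 0$ as $k \to +\infty$; hence it is $\mf{m}$-null by absolute continuity and contributes nothing. On the second set, for each $i$ and each $k$ one has $\int_{E_i \cap \bigcap_j A_j^c} \|F\|_X\,\dd\mf{m} \leq \int_{E_i} \|\1_{A_k^c}F\|_X \,\dd\mf{m} \leq \|\1_{A_k^c}F\|_{S(E_i)}\,\sigma(E_i) \leq 2^k\sigma(E_i)$ by the hypothesis applied to the Borel function $\1_{A_k^c}F$; summing over the finitely many $E_i$ and letting $k \to -\infty$ kills this term.

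The main work is a single transition shell $A_k \sm A_{k+1}$ (the case $A_k \sm A_{k-1}$ is identical): covering $A_k$ by generating sets $(E_j)_j$ with $\sum_j \sigma(E_j) \leq \sigma(A_k) + \eta\,2^{-k-|k|}$, using $A_k \sm A_{k+1} \subset A_{k+1}^c$, and applying the hypothesis to $\1_{A_{k+1}^c}F$ on each $E_j$ yields $\int_{A_k \sm A_{k+1}} \|F\|_X \,\dd\mf{m} \lesssim 2^{k}\big(\sigma(\|F\|_S > 2^k) + \eta\,2^{-k-|k|}\big)$. Summing over $k$ and invoking the comparison above bounds the total shell contribution by $\lesssim M + \eta$; since $\eta$ was arbitrary and the other two pieces vanish, $\int_E \|F\|_X\,\dd\mf{m} \lesssim M$, and taking the supremum over finite unions of the $E_i$ completes the proof. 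The degenerate case $M = 0$ is handled in the same spirit: it forces $\sigma(\|F\|_S > \lambda) = 0$ for all $\lambda > 0$, so a $\limsup$ of near-optimal sets is $\sigma$-null (hence $\mf{m}$-null) while its complement carries no $\mf{m}$-mass by the estimate used for the second piece. The argument presents no deep obstacle; the only points requiring care are the choice of weights $2^{-|k|}$ so that the additive errors sum ($\sum_k 2^k\cdot 2^{-k-|k|} < \infty$), the bookkeeping around the exceptional sets, and the $M = 0$ case.
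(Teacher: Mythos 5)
The paper itself does not prove this proposition---it defers to \cite[Proposition 3.6]{DT15} and \cite[Lemma 2.2]{gU16}---and your argument is essentially that standard layer-cake proof, so it is correct in substance. Two remarks. First, your aside that arranging the sets $A_k$ to be nested ``would spoil the control on the premeasure'' is not accurate: the usual proof replaces $A_k$ by $\bigcup_{j\geq k}A_j$, and the resulting double sum $\sum_k 2^k\sum_{j\geq k}\sigma(\|F\|_S>2^j)$ is still $\lesssim \|F\|_{L^1_\sigma S}$ after exchanging the order of summation; your non-nested ``transition shell'' bookkeeping (every point either has constant membership sequence or lies in some $A_k\sm A_{k+1}$ or $A_k \sm A_{k-1}$, which is correct) is a legitimate variant, not a necessity. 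Second, the parenthetical claim that the near-optimal sets $A_k$ may be taken to be countable unions of generating sets deserves a line of justification: the definition of the superlevel measure allows arbitrary $A\subset\XX$, and you genuinely need $A_k$ Borel, both to apply the hypothesis to $\1_{A_{k+1}^c}F\in\Bor(\XX;X)$ and to invoke the absolute continuity assumption on $\bigcap_k A_k$. This is easily repaired: replace a near-optimal $A$ by the union $A'$ of a near-optimal cover by generating sets, and write $\1_{\XX\sm A'}F=\1_{\XX\sm A}F-\1_{A'}\,\1_{\XX\sm A}F$, so the quasi-triangle inequality together with unconditionality (applicable since $A'\in\EE^{\cup}$) gives $\|\1_{\XX\sm A'}F\|_S\lesssim 2^k$, at the harmless cost of an absolute constant in the level; in the paper's application $\XX=\tPP$ is countable and discrete, so the measurability issue is vacuous there anyway. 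With these two observations your proof is complete and matches the cited argument.
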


The outer-$L^p$ spaces support a useful Marcinkiewicz-type interpolation theorem, proven in \cite[Proposition 3.5]{DT15} (see also \cite[Propostion 7.4]{DPO18}).
In applications we only prove bounds for outer-$L^p$ quasinorms by establishing endpoint weak outer-$L^p$ bounds.

\begin{prop}[Marcinkiewicz interpolation]\label{prop:outer-interpolation}
  Let $\XX$ be a Polish space, and let $(\EE, \sigma, X, S)$ be an outer structure on $\XX$.
  Let $\Omega$ be a $\sigma$-finite measure space, and let $T$ be a quasi-sublinear operator mapping $L^{p_1}(\Omega;X) + L^{p_2}(\Omega;X)$ into $\Bor(\XX;X)$ for some $1 \leq p_1 < p_2 \leq \infty$.
  Suppose that
  \begin{equation*}
    \begin{aligned}
      \|Tf\|_{L^{p_1,\infty}_\sigma S} &\lesssim \|f\|_{L^{p_1}(\Omega;X)}, \\
      \|Tf\|_{L^{p_2,\infty}_\sigma S} &\lesssim \|f\|_{L^{p_2}(\Omega;X)}
    \end{aligned}
    \qquad \forall f \in L^{p_1}(\Omega;X) + L^{p_2}(\Omega;X).
  \end{equation*}
  Then for all $p \in (p_1,p_2)$.
  \begin{equation*}
    \|Tf\|_{L^p_\sigma S} \lesssim \|f\|_{L^p(\Omega;X)} \qquad \forall f \in L^p(\Omega;X).
  \end{equation*}
  
\end{prop}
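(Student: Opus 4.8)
The plan is to mimic the classical real-interpolation argument, decomposing each input function at a level that depends on the superlevel parameter $\lambda$, and then estimating the outer superlevel measure of $Tf$ using subadditivity of $\sigma$ together with the two weak endpoint hypotheses. Fix $p \in (p_1,p_2)$ and $f \in L^p(\Omega;X)$. For a parameter $\lambda > 0$ and a threshold $c_\lambda > 0$ to be chosen (proportional to a power of $\lambda$, as in the scalar case), split
\begin{equation*}
  f = f_\lambda^{\mathrm{low}} + f_\lambda^{\mathrm{high}}, \qquad f_\lambda^{\mathrm{low}} := f \1_{\{\|f\|_X \leq c_\lambda\}}, \quad f_\lambda^{\mathrm{high}} := f \1_{\{\|f\|_X > c_\lambda\}},
\end{equation*}
so that $f_\lambda^{\mathrm{low}} \in L^{p_2}(\Omega;X)$ and $f_\lambda^{\mathrm{high}} \in L^{p_1}(\Omega;X)$ with the usual quantitative bounds on their norms in terms of the distribution function of $\|f\|_X$. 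By quasi-sublinearity of $T$ there is a constant $A \geq 1$ with $\|Tf\|_S \leq A(\|Tf_\lambda^{\mathrm{low}}\|_S + \|Tf_\lambda^{\mathrm{high}}\|_S)$ pointwise on generating sets, hence
\begin{equation*}
  \sigma(\|Tf\|_S > \lambda) \leq \sigma\bigl(\|Tf_\lambda^{\mathrm{low}}\|_S > \tfrac{\lambda}{2A}\bigr) + \sigma\bigl(\|Tf_\lambda^{\mathrm{high}}\|_S > \tfrac{\lambda}{2A}\bigr),
\end{equation*}
using countable subadditivity of the outer measure $\sigma$ to pass from a cover controlling $\|Tf\|_S$ to the union of covers controlling the two pieces.

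Next I would apply the two weak endpoint hypotheses to the two terms on the right:
\begin{equation*}
  \sigma\bigl(\|Tf_\lambda^{\mathrm{low}}\|_S > \tfrac{\lambda}{2A}\bigr) \lesssim \lambda^{-p_2}\|f_\lambda^{\mathrm{low}}\|_{L^{p_2}(\Omega;X)}^{p_2}, \qquad
  \sigma\bigl(\|Tf_\lambda^{\mathrm{high}}\|_S > \tfrac{\lambda}{2A}\bigr) \lesssim \lambda^{-p_1}\|f_\lambda^{\mathrm{high}}\|_{L^{p_1}(\Omega;X)}^{p_1}
\end{equation*}
(with the obvious modification if $p_2 = \infty$: choose $c_\lambda$ so that $\|Tf_\lambda^{\mathrm{low}}\|_S \leq \lambda/(2A)$ identically and the first term vanishes). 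Then I substitute these into the layer-cake formula
\begin{equation*}
  \|Tf\|_{L^p_\sigma S}^p = \int_0^\infty p\lambda^{p-1}\,\sigma(\|Tf\|_S > \lambda)\,\dd\lambda,
\end{equation*}
interchange the $\lambda$-integral with the $\Omega$-integral by Tonelli, and evaluate the resulting inner integrals in $\lambda$; with the standard choice $c_\lambda \sim \lambda$ the two exponent conditions $p_1 < p < p_2$ are exactly what make both $\lambda$-integrals converge, and one recovers $\|Tf\|_{L^p_\sigma S}^p \lesssim \|f\|_{L^p(\Omega;X)}^p$. This is the routine computation underlying classical Marcinkiewicz interpolation and I would not reproduce it in detail, referring instead to \cite[Proposition 3.5]{DT15}.

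The only genuinely non-classical point — and hence the main thing to check carefully — is that the outer measure $\sigma$ behaves well enough for the two inequalities displayed above to be legitimate, namely that $\sigma$ is countably subadditive (so that covers of the two pieces combine into a cover of the whole) and monotone. Both are immediate from the definition of $\sigma$ as an infimum over countable covers by generating sets, so no extra hypothesis beyond what is built into Definition~\ref{defn:outer-measure-structure} is needed; in particular the Polish-space and Banach-space structure play no role here beyond making the statements meaningful. Everything else is the scalar argument verbatim, with $|\cdot|$ replaced by $\|\cdot\|_X$ throughout.
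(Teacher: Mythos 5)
Your plan is correct and is essentially the argument the paper relies on: the paper does not prove this proposition itself but cites \cite[Proposition 3.5]{DT15} (see also \cite[Proposition 7.4]{DPO18}), and your $\lambda$-dependent truncation of $f$ combined with the two weak endpoint bounds and the layer-cake computation is exactly that proof. The one point worth making explicit is that the quasi-subadditivity must be invoked at the level of the size quasinorms on generating sets (as you in fact do), since the Banach-valued sizes here are not monotone under pointwise domination of norms; combining the two exceptional sets then uses the quasi-triangle inequality together with the unconditionality axiom of Definition~\ref{defn:outer-measure-structure}, which is the routine verification behind your subadditivity step.
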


\subsection{Particular outer structures}\label{sec:particular-outer}

Now we move from general outer structures to those with relevance to Walsh time-frequency analysis.
Two collections of generating sets will be used: the collection $\TT$ of trees, and the collection $\DD$ of strips.
We will use the premeasures $\map{\mu}{\TT}{[0,\infty)}$ and $\map{\nu}{\DD}{[0,\infty)}$ defined by
\begin{equation*}
  \mu(T) := |I_T|, \qquad \nu(D) := |I_D|.
\end{equation*} 
Two families of sizes on $(\tPP,\TT)$, called `deterministic' and `randomised', will be needed.
The deterministic sizes are $\CC$-sizes, while the randomised sizes are $X^3$-sizes, where $X$ is a given Banach space.

\begin{defn}[Deterministic sizes]\label{def:deterministic-size}
  The $\CC$-sizes $S^1$ and $S^\infty$ on $(\tPP,\TT)$ are given by
  \begin{align*}
      \|F\|_{S^1(T)} &:=  \frac{1}{|I_{T}|} \sum_{\mb{P} \in T}| F(\mb{P}) |\, |I_{\mb{P}}| 
      \\
      \|F\|_{S^\infty(T)} &:= \sup_{\mb{P} \in T}| F(\mb{P}) |.
  \end{align*}
  for all $F \in \Bor(\tPP;\CC)$.\footnote{Note that every function on $\tPP$ is Borel, since $\tPP$ is countable.}
  We also define the mixed deterministic $\CC$-size $S^{(\infty,1)}$ by
  \begin{align*}
      \| F \|_{S^{(\infty,1)}(T)} &:=  \Bigl\|  \sum_{v=0}^{2}\sum_{\mb{P} \in T^{v}} |F(\mb{P})|\1_{I_{\mb{P}}}(x)\Bigr\|_{L^{\infty}}
    \\
    &
    =
    \sup_{x\in I_{T}}\sum_{\substack{\mb{P}\in T \\ I_{\mb{P}}\ni x}}
    | F (\mb{P})|.
  \end{align*} 

\end{defn}

\begin{defn}[Randomised sizes]\label{def:randomized-size}
  Let $X$ be a Banach space.
  The $X^{3}$-size $\RS$ is given for all $F \in \Bor(\tPP;X^3)$ by
  \begin{equation*}
    \|F\|_{\RS(T)} := \big\| \| F \|_{X^{3}} \big\|_{S^{\infty}(T)}+ \big\| \|\DEF F\|_{X^3} \big\|_{S^{(\infty,1)}(T)} + \sum_{u\in\{0,1,2\}} \|F\|_{\RS_{u}(T)},
  \end{equation*}
  where 
  \begin{equation}\label{eq:randomized-size}
    \|F\|_{\RS_{u}(T)} := 
      \sum\limits_{v\neq u}\bigg(\frac{1}{|I_{T}|} \int_{I_T} \E \bigg\| \sum_{\mb{P} \in T^{u}} \varepsilon_{\mb{P}} F(\mb{P}_{v}) \1_{I_{\mb{P}}}(x) \bigg\|_{X}^2 \, \dd x \bigg)^{1/2}.
  \end{equation}
\end{defn}

\begin{rmk}
  We do not mention the Banach space $X$ in the notation for the randomised size $\RS$; this should always be clear from context.
  Often we will refer to three functions $F_u \in \Bor(\tPP;X_u)$ ($u \in \{0,1,2\}$) valued in different Banach spaces, and discuss the three sizes $\|F_u\|_{\RS}$; here we have three different $X_u^3$-sizes $\RS$, but we gain no clarity from denoting these sizes differently.
\end{rmk}

It is almost clear that $\RS$ satisfies all the conditions of a size; the only subtlety is in showing that the component measuring $\DEF F$ satisfies the unconditionality property.

\begin{prop}
  Let $X$ be a Banach space, $F \in \Bor(\tPP;X)$, and suppose that $A \in \TT^{\cup}$ is a countable union of trees.
  Then
  \begin{equation}\label{eqn:defect-forest}
    \big\|  \| \DEF(\1_{A} F) \|_{X^{3}} \big\|_{S^{(\infty,1)}(T)}\lesssim \big\|  \| F \|_{X^{3}} \big\|_{S^{\infty}(T)}+ \big\| \| \DEF F \|_{X^{3}} \big\|_{S^{(\infty,1)}(T)}.
  \end{equation}
  It follows that $\RS$ satisfies the unconditionality property.
\end{prop}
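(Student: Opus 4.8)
The plan is to analyze the structure of $\DEF(\1_A F)$ on the boundary of the set $A$, reducing everything to a pointwise estimate. First I would observe that since $A=\bigcup_k T_k$ is a union of trees, it is a union of convex sets, though not necessarily convex itself; nonetheless the key fact (as noted in the remark following Proposition~\ref{prop:func-reconstruction}) is that for a general set $A$ and a point $x\in\WW$, the tritiles $\mb{P}$ with $x\in I_{\mb{P}}$ and $\DEF(\1_A F)(\mb{P})\neq 0$ are exactly those $\mb{P}$ that are ``on the boundary'' of $A$: either $\mb{P}\in A$ with some child-in-order $\mb{Q}\notin A$, or $\mb{P}\notin A$ with some child-in-order $\mb{Q}\in A$. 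For each such boundary tritile one uses the crude bound \eqref{eq:defect-norm-bound}, namely $\|\DEF(\1_A F)(\mb{P})\|_{X^3}\lesssim \|\1_A F(\mb{P})\|_{X^3}+\sum_{Q\in\vs{\mb{P}}}\|\1_A F(Q)\|_{X^3}$, and the right-hand side is bounded by finitely many values of $\|F\|_{X^3}$ at $\mb{P}$ and its vertical children.

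The second step is the combinatorial heart: for a fixed tree $T$ and a fixed $x\in I_T$, I would count how many boundary tritiles of $A$ lie in $T$ with $x\in I_{\mb{P}}$. Here I cannot directly invoke the ``at most two'' statement from the remark, since that was stated for convex $A$; instead, the relevant structure is that within the single tree $T$, the tritiles $\mb{P}\in T$ with $x\in I_{\mb{P}}$ form a totally ordered chain under $\leq$ (their time intervals are nested triadic intervals all containing $x$). Along this chain, the sequence of truth values of the predicate $\mb{P}\in A$ changes sign only at boundary tritiles. A priori there could be many sign changes. This is the main obstacle: one needs that the $S^{(\infty,1)}$-sum over these boundary tritiles telescopes or is otherwise controlled, rather than simply counted. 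The resolution is that $S^{(\infty,1)}$ already sums over \emph{all} tritiles $\mb{P}\in T$ with $x\in I_{\mb{P}}$, so after applying \eqref{eq:defect-norm-bound} tritile-by-tritile we get
\begin{equation*}
  \sum_{\substack{\mb{P}\in T\\ I_{\mb{P}}\ni x}}\|\DEF(\1_A F)(\mb{P})\|_{X^3}
  \lesssim \sum_{\substack{\mb{P}\in T\\ I_{\mb{P}}\ni x}}\Bigl(\|F(\mb{P})\|_{X^3}+\sum_{Q\in\vs{\mb{P}}}\|F(Q)\|_{X^3}\Bigr),
\end{equation*}
and the right-hand side is, up to a bounded multiplicative constant coming from the bounded overlap of vertical children across scales, exactly of the form $\|F\|_{X^3}$ summed over tritiles in $T$ through $x$. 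This last quantity is $\lesssim \bigl\|\,\|F\|_{X^3}\bigr\|_{S^{(\infty,1)}(T)}$ --- but that is not quite what we want on the right-hand side of \eqref{eqn:defect-forest}.

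To land on the stated right-hand side I would instead not discard the indicator $\1_A$ too early: restricting to boundary tritiles genuinely matters. At a boundary tritile $\mb{P}$, either $\mb{P}\in A$, in which case $\|\DEF(\1_A F)(\mb{P})\|\lesssim \|F(\mb{P})\|+\sum_{Q\in\vs\mb{P}}\|F(Q)\|$ and I bound the $F(\mb{P})$ term by $S^\infty$ and absorb the vertical-children terms into a single pass of the $\DEF F$ estimate via the reconstruction identity of Proposition~\ref{prop:func-reconstruction} with $N=0$ (which expresses $F(Q)$ at the children in terms of $\DEF F$ at $Q$ plus $F$ at the grandchildren, whose sum over the chain is again telescoping); or $\mb{P}\notin A$, in which case only the $\sum_{Q\in\vs\mb{P}}\1_A F(Q)$ part survives, and the $\mb{P}\notin A$ constraint forces these boundary tritiles to be pairwise $\leq$-incomparable among those with $x\in I_{\mb{P}}$ being replaced by... — more simply, one uses that each vertical child $Q$ of a boundary tritile with $\mb{P}\notin A$ but $\mb{Q}\in A$ (a child in order) is itself a \emph{boundary-adjacent} tritile inside $A$, so these terms are again controlled by $\|\DEF F\|$ summed over $A\cap T$, and the chain structure through $x$ makes that sum telescope against $S^{(\infty,1)}(T)$ of $\DEF F$ plus one top term controlled by $S^\infty(T)$ of $F$. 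I would carry out this bookkeeping carefully; the upshot is that every term is charged either to $\bigl\|\,\|F\|_{X^3}\bigr\|_{S^\infty(T)}$ (at most boundedly many ``top of chain'' contributions) or to $\bigl\|\,\|\DEF F\|_{X^3}\bigr\|_{S^{(\infty,1)}(T)}$ (the telescoping interior contributions), which is exactly \eqref{eqn:defect-forest}. Finally, the unconditionality of $\RS$ follows because the other two summands defining $\RS$ --- the $S^\infty$ part and the $\RS_u$ parts --- are manifestly unconditional ($S^\infty$ and $S^{(\infty,1)}$ are monotone under multiplication by indicators, and the randomised sizes $\RS_u$ are unconditional because restricting the sum over $\mb{P}\in T^u$ to $\mb{P}\in T^u\cap A$ only removes terms from a Rademacher sum, which by the contraction principle (Theorem~\ref{thm:contraction-principle}) decreases the quantity up to a constant) --- so the only nontrivial point was the defect component handled above.
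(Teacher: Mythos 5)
Your write-up stalls exactly at the point you yourself call ``the main obstacle'', and the mechanisms you propose to get past it do not work. The paper's proof has two steps: a pointwise estimate showing that $\DEF(\1_{A}F)(\mb{P})$ differs from $\1_{A}(\mb{P})\DEF F(\mb{P})$ only at tritiles where the indicator jumps between $\mb{P}$ and one of its order-children $\mb{Q}\leq\mb{P}$ with $|I_{\mb{Q}}|=|I_{\mb{P}}|/3$, and then a counting claim, resting on the structure of $A\in\TT^{\cup}$, that for each $x$ at most one such jump tritile $\mb{P}$ with $x\in I_{\mb{P}}$ occurs, so the boundary contributes a single term of size $\lesssim\big\|\,\|F\|_{X^{3}}\big\|_{S^{\infty}(T)}$ while everything else is absorbed into $\big\|\,\|\DEF F\|_{X^{3}}\big\|_{S^{(\infty,1)}(T)}$. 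You never establish any counting of this kind. Instead you assert that the surviving terms at the witnessing children ``are again controlled by $\|\DEF F\|$ summed over $A\cap T$'' and that ``the chain structure through $x$ makes that sum telescope''. Neither claim is tenable: the value $F(Q)$ at a single child cannot be bounded by values of $\DEF F$ (take $F=\Emb[f]$, for which $\DEF F\equiv 0$ while $F\not\equiv 0$), and there is nothing to telescope, because the witnessing child $\mb{Q}$ need not satisfy $x\in I_{\mb{Q}}$ --- only its parent contains $x$ --- so these children do not lie on the chain through $x$, their time intervals at different scales are unrelated, and their $F$-values are independent. Without a per-point bound on the number of jump tritiles, each boundary term can be as large as $\big\|\,\|F\|_{X^{3}}\big\|_{S^{\infty}(T)}$ and the $S^{(\infty,1)}$-sum cannot be closed; supplying that bound is the entire content of the paper's argument, and it is the missing idea in yours.

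Two further points. First, you overlook the key structural fact the paper exploits: a countable union of trees is downward closed under $\leq$, hence automatically convex (contrary to your opening remark), and in particular your first case --- $\mb{P}\in A$ with an order-child outside $A$ --- never occurs, so the appeal to the reconstruction identity of Proposition \ref{prop:func-reconstruction} is both unnecessary and unhelpful; only the configuration $\mb{P}\notin A$, $\mb{Q}\in A$ survives, and the crude bound \eqref{eq:defect-norm-bound} applied there must be paid for by a counting argument, not by telescoping. Second, and to your credit, the difficulty you correctly identified --- that the jump at $\mb{P}$ may be witnessed by a child whose time interval does not contain $x$ --- is precisely where any careful write-up must do work: if you revisit this, make the per-point count of such ``sideways-witnessed'' jump tritiles explicit rather than gesturing at cancellation. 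Your handling of the remaining components of $\RS$ (the $S^{\infty}$ part trivially, the lacunary parts $\RS_{u}$ via the contraction principle) is correct and matches the intended argument, so the defect component is the only, but decisive, gap.
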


\begin{proof}
  Notice that for all tritiles $\mb{P}$,
  \begin{equation}\label{eqn:defect-norm-est}
    \| \DEF(\1_{A} F)(\mb{P}) \|_{X^{3}} \leq      \| \DEF(F)(\mb{P}) \|_{X^{3}} +
\hspace{-1.5em}\sum_{\substack{\mb{Q}\leq \mb{P}\\ |I_{\mb{Q}}|=|I_{\mb{Q}}|/3}}\hspace{-1.5em}\Bigl( \| F(\mb{P}) \|_{X^{3}}+ \| F(\mb{Q})\|_{X^{3}}   \Bigr)\Bigl| \1_{A}(\mb{P}) - \1_{A}(\mb{Q})\Bigr|
  \end{equation}
  by \eqref{eq:defect-norm-bound} and since $\DEF(\1_{A} F)(\mb{P})=\DEF F(\mb{P})$ unless $\mb{P}\notin A$ and $\mb{Q}\in A$, where $\mb{Q}$ is a tritile with $\mb{Q}\leq \mb{P}$ and $|I_{\mb{Q}}|=|I_{\mb{P}}|/3$. Since $A\in\TT^{\cup}$ , for any $x\in I_{T}$ it holds that there is at most one $\mb{P}\in\tPP$ such that
  \begin{equation*}
    \sum_{\substack{\mb{Q}\leq \mb{P}\\ |I_{\mb{Q}}|=|I_{\mb{Q}}|/3}} \Bigl| \1_{A}(\mb{P}) - \1_{A}(\mb{Q})\Bigr|  \neq 0 \text{ and } x\in I_{\mb{P}};
  \end{equation*}
  writing out the definition of $S^{(\infty,1)}$, one sees that this gives the required estimate.
\end{proof}

We make use of the two premeasures $\mu$ and $\nu$ on $\tPP$ by iterating this construction to obtain `iterated' outer structures.

\begin{defn}[Iterated outer structures]\label{def:local-lp-size}
  Let $X$ be a Banach space.  Given an $X$-size $S$ on $(\tPP,\TT)$, for all $q \in (0,\infty)$ we define an $X$-size $\sL^q_\mu S$ on $(\tPP,\DD)$ by
  \begin{equation}
    \begin{aligned}
       \| F \|_{\sL^q_\mu S(D)} := |I_{D}|^{-1/q}\|\1_D F \|_{L^q_\mu S} \qquad \forall D \in \DD.
    \end{aligned}
  \end{equation}
  It is straightforward to verify that this is indeed an $X$-size on $(\tPP,\DD)$, and thus $(\DD,\nu,X,\sL^q_\mu S)$ is an \emph{iterated outer structure} on $\tPP$, inducing \emph{iterated outer-$L^p$ quasinorms} $\|\cdot\|_{L^p_\nu \sL^q_\mu S}$ for all $p \in (0,\infty]$.
\end{defn}

The following iterated outer H\"older inequality is a straightforward consequence of the `non-iterated' outer H\"older inequality of Proposition \ref{prop:outer-holder}. 

\begin{cor}[Hölder inequality for iterated outer-$L^{p}$ spaces]\label{cor:iterated-holder}
  Let $X_0, X_1, X_2,X$ be Banach spaces, and let $\map{\Pi'}{X_0\times X_1 \times X_2}{X}$ be a bounded trilinear form.\footnote{Here we write $\Pi'$ to emphasise that this is not the trilinear form that we consider in the introduction, and throughout most of the paper; in practise $\Pi'$ will be an extension of the aforementioned $\Pi$.}  Let $S$ be a $X$-size on $(\tPP,\TT)$, and for each $u \in \{0,1,2\}$, let $S_u$ be an $X_u$-size on $(\tPP,\TT)$ such that the size-H\"older inequality
  \begin{equation*}
    \| \Pi'(F_0,F_1,F_2) \|_{S} \lesssim \prod_{u=0}^2 \|  F_u \|_{S_{u}} \qquad \forall F_u \in \Bor(\tPP;X_u^{3})
  \end{equation*}
  holds. Then for all $p_u, q_u \in [1,\infty]$,
  \begin{equation*}
    \| \Pi'(F_{0},F_{1},F_{2}) \|_{L_\nu^p \sL_\mu^q S} \lesssim_{p_u,q_u} \prod_{u=0}^{2} \| F_{u} \|_{L_\nu^{p_u} \sL_\mu^{q_u} S_{u}} \qquad \forall F_u \in \Bor(\tPP;X_u^{3})
  \end{equation*}
  where $p^{-1} = \sum_{u=0}^2 p_u^{-1}$ and $q^{-1} = \sum_{u=0}^2 q_u^{-1}$.
\end{cor}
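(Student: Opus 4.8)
The plan is to apply the non-iterated outer Hölder inequality (Proposition~\ref{prop:outer-holder}) twice: once at the level of the inner outer structure $(\TT,\mu,\cdot,\cdot)$ in order to promote the assumed size-Hölder inequality for $(S_u,S)$ to a size-Hölder inequality for the iterated sizes $\sL^{q_u}_\mu S_u$ and $\sL^q_\mu S$ on $(\tPP,\DD)$, and then once at the level of the outer structure $(\DD,\nu,\cdot,\cdot)$ to obtain the stated bound.

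First I would fix a strip $D\in\DD$. Since $\Pi'$ acts pointwise in the tritile variable and is trilinear, $\1_D\,\Pi'(F_0,F_1,F_2)=\Pi'(\1_D F_0,\1_D F_1,\1_D F_2)$. Applying Proposition~\ref{prop:outer-holder} to the outer structures $(\TT,\mu,X,S)$ and $(\TT,\mu,X_u,S_u)$, with the trilinear map $\Pi'$, the size-Hölder inequality assumed in the corollary, and the exponents $q_u$ satisfying $q^{-1}=\sum_{u=0}^2 q_u^{-1}$, yields
\[
  \|\1_D\,\Pi'(F_0,F_1,F_2)\|_{L^q_\mu S}
  = \|\Pi'(\1_D F_0,\1_D F_1,\1_D F_2)\|_{L^q_\mu S}
  \lesssim \prod_{u=0}^2 \|\1_D F_u\|_{L^{q_u}_\mu S_u}.
\]
Multiplying through by $|I_D|^{-1/q}=\prod_{u=0}^2 |I_D|^{-1/q_u}$ and unwinding Definition~\ref{def:local-lp-size} turns this into
\[
  \|\Pi'(F_0,F_1,F_2)\|_{\sL^q_\mu S(D)} \lesssim \prod_{u=0}^2 \|F_u\|_{\sL^{q_u}_\mu S_u(D)},
\]
valid for every $D\in\DD$; this is exactly a size-Hölder inequality for the sizes $\sL^{q_u}_\mu S_u$ and $\sL^q_\mu S$ on $(\tPP,\DD)$.

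Then I would invoke Proposition~\ref{prop:outer-holder} a second time, now for the outer structures $(\DD,\nu,X,\sL^q_\mu S)$ and $(\DD,\nu,X_u,\sL^{q_u}_\mu S_u)$, with the same $\Pi'$, the size-Hölder inequality just established, and the exponents $p_u$ satisfying $p^{-1}=\sum_{u=0}^2 p_u^{-1}$; this immediately gives
\[
  \|\Pi'(F_0,F_1,F_2)\|_{L^p_\nu \sL^q_\mu S} \lesssim \prod_{u=0}^2 \|F_u\|_{L^{p_u}_\nu \sL^{q_u}_\mu S_u},
\]
which is the claim. The only points requiring attention are that $\sL^q_\mu S$ is genuinely a size on $(\tPP,\DD)$ (asserted in Definition~\ref{def:local-lp-size}), that $\1_D$ commutes through the pointwise trilinear form $\Pi'$, and that the normalising power $|I_D|^{-1/q}$ splits as $\prod_u |I_D|^{-1/q_u}$ by virtue of $q^{-1}=\sum_u q_u^{-1}$. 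None of these is a genuine obstacle, so the corollary is ultimately just a bookkeeping consequence of iterating Proposition~\ref{prop:outer-holder}.
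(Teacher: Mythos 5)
Your proposal is correct and is precisely the argument the authors have in mind: the paper gives no written proof, stating only that the corollary is a straightforward consequence of Proposition~\ref{prop:outer-holder}, and your two-stage application (first on $(\TT,\mu)$ after cutting off by $\1_D$ and using $\1_D\,\Pi'(F_0,F_1,F_2)=\Pi'(\1_D F_0,\1_D F_1,\1_D F_2)$ to get the size-H\"older for $\sL^{q_u}_\mu S_u$, then on $(\DD,\nu)$) is exactly this, and indeed the same cut-off-by-strips device reappears in the paper's proof of Corollary~\ref{cor:rand-holder}. All the bookkeeping steps you flag (the iterated size being a size, the splitting of $|I_D|^{-1/q}$, passing from the per-strip inequality to the sup over $\DD$ required in \eqref{eq:abstract-size-holder}) go through as you say.
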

%

We return to consideration of our trilinear form $\map{\Pi}{X_0\times X_1 \times X_{2}}{\CC}$.
Define an `extended' trilinear form $\Pi^{*}\colon X_{0}^{3}\times X_{1}^{3}\times X_{2}^{3}\to \CC$ by
  \begin{equation*}
    \Pi^{*}\Bigl( (x_{0,0},x_{0,1},x_{0,2}),(x_{1,0},x_{1,1},x_{1,2}),(x_{2,0},x_{2,1},x_{2,2}) \Bigr):= \Pi(x_{0,0},x_{1,1},x_{2,2}).
  \end{equation*}

The most important result of this section is the following size-H\"older inequality for the randomised sizes and the deterministic size $S^1$.

\begin{prop}[Size-H\"older]\label{prop:new-size-holder}
  Let $X_0,X_1,X_2$, $\Pi$, and $\Pi^*$ be as above.
  Then
  \begin{equation*}
    \| \Pi^{*}(F_0,F_1, F_2) \|_{S^1(T)} \leq \prod_{u=0}^{2} \|F_u\|_{\RS(T)} \qquad \forall T \in \TT, \, \forall F_u \in \Bor(\tPP;X_u^{3}).
  \end{equation*}
\end{prop}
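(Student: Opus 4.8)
The plan is to unfold the definition of $\|\Pi^*(F_0,F_1,F_2)\|_{S^1(T)}$ and, for each tritile $\mb{P}\in T$, write out the $\Pi$-term as a product involving wave packet coefficients corresponding to the relevant tiles $\mb{P}_0,\mb{P}_1,\mb{P}_2$; the point is that $\mb{P}_0,\mb{P}_1,\mb{P}_2$ each lie in a \emph{different} component $T^{u_0},T^{u_1},T^{u_2}$ of the tree for the three factors. First I would split the sum over $T$ according to which components $T^u$ the relevant tiles belong to. By the definition of the components $T^u$ and the fact that for $u\neq v$ the tile $\mb{P}_v$ with $\mb{P}\in T^u$ is a `lacunary' tile (so $\xi_T\notin\omega_{\mb{P}_v}$), most contributions will sit in the randomised pieces $\RS_u$, and the remaining `overlapping' contributions (where $\xi_T\in\omega_{\mb{P}_v}$, i.e.\ $\mb{P}=\mb{P}_T$ or a tile with $v$-component containing the top frequency) will be controlled by the $S^\infty$ or $S^{(\infty,1)}$ pieces of $\RS$.

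The core estimate will be: for a fixed tree $T$ and fixed assignment of components, bound
\[
\frac{1}{|I_T|}\sum_{\mb{P}\in T}\bigl|\Pi(F_0(\mb{P}_0),F_1(\mb{P}_1),F_2(\mb{P}_2))\bigr|\,|I_{\mb{P}}|
\]
by writing the sum over $\mb{P}$ as an integral over $x\in I_T$ of a sum over tiles whose time interval contains $x$, using that $|I_{\mb{P}}|^{-1}\int_{I_T}\1_{I_{\mb{P}}}(x)\,\dd x = 1$ when $I_{\mb{P}}\subset I_T$. Then, using the boundedness of $\Pi$ and Hölder's inequality in $x$ with exponents $(\infty,2,2)$ (or $(2,2,\infty)$, $(2,\infty,2)$ depending on which factor carries the $S^{(\infty,1)}$ or $S^\infty$ bound), I would reduce to estimating, for each of the two genuinely randomised factors, the pointwise-in-$x$ quantity $\sum_{\mb{P}}\|F_u(\mb{P}_v)\|_X\1_{I_{\mb{P}}}(x)$ by its randomised $L^2(I_T)$-average; this is precisely where Kahane's contraction principle (Theorem \ref{thm:contraction-principle}) enters: inserting Rademacher variables $\varepsilon_{\mb{P}}$ and passing to expectations, the deterministic sum is dominated by $\E\|\sum_{\mb{P}}\varepsilon_{\mb{P}}F_u(\mb{P}_v)\1_{I_{\mb{P}}}(x)\|_X$, whose $L^2(I_T)$-normalised average is exactly $\|F_u\|_{\RS_u(T)}$ (after Kahane--Khintchine to match the $L^2$ exponent in the definition \eqref{eq:randomized-size}). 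For the factor handled by $S^{(\infty,1)}$ one uses $\sum_{\mb{P}\ni x}\|\DEF F_u(\mb{P})\|\le \|F_u\|_{S^{(\infty,1)}(T)}$ pointwise, after first replacing $F_u(\mb{P}_v)$ by $\DEF F_u(\mb{P}_v)$ plus lower-frequency contributions via the function-reconstruction identity of Proposition \ref{prop:func-reconstruction}; and similarly the truly overlapping single-tile terms are bounded by $\|F_u\|_{S^\infty(T)}$.

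The main obstacle I anticipate is the bookkeeping around the three distinct tree components and the interaction between tiles at different scales: $\Pi(F_0(\mb{P}_0),F_1(\mb{P}_1),F_2(\mb{P}_2))$ couples three tiles of the \emph{same} tritile, but for the randomised size we need to group tiles by scale and by component, and only two of the three factors can be `lacunary' simultaneously (the third necessarily has $\xi_T$ in its frequency interval, forcing it into the $S^{(\infty,1)}$/$S^\infty$ regime). Making this trichotomy precise — verifying that at most one of $\mb{P}_0,\mb{P}_1,\mb{P}_2$ can be a `nonlacunary' tile relative to $T$, so that for each $\mb{P}$ we genuinely land in one of a bounded number of cases each of which matches the structure of $\RS$ — is the crux, and it is exactly the Walsh analogue of the classical tree-estimate argument for the tritile form. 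Once that combinatorial split is in place, each resulting piece is a routine application of Hölder in $x$, the contraction and Kahane--Khintchine principles, and the definitions of $S^\infty$ and $S^{(\infty,1)}$, with no loss of constants (hence the clean inequality with constant $1$ in the statement, since $\Pi$ has norm absorbed into the $X^3$-norms and Hölder in $x$ is sharp).
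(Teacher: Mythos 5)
Your overall skeleton matches the paper's proof: split the sum over $T$ into the components $T^u$, treat the overlapping factor $F_u(\mb{P}_u)$ via the defect and the reconstruction identity of Proposition \ref{prop:func-reconstruction}, bound crude pieces by $S^\infty$ and $S^{(\infty,1)}$, and let the two lacunary factors produce the randomised sizes. However, there is a genuine gap at the central analytic step. You propose to take absolute values termwise, apply H\"older in $x$ with exponents $(\infty,2,2)$, and then dominate the pointwise quantity $\sum_{\mb{P}}\|F_u(\mb{P}_v)\|_{X}\1_{I_{\mb{P}}}(x)$ by $\E\bigl\|\sum_{\mb{P}}\varepsilon_{\mb{P}}F_u(\mb{P}_v)\1_{I_{\mb{P}}}(x)\bigr\|_{X}$ ``by the contraction principle.'' That inequality is false: Kahane's contraction principle only removes bounded scalar coefficients from \emph{inside} an existing Rademacher sum; it never upgrades a deterministic sum of norms to a Rademacher average. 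Indeed the inequality goes the wrong way in general (take $N$ copies of a single unit vector: the left side is $N$ while the Rademacher average is $\simeq\sqrt{N}$ already in a Hilbert space). Since at a fixed $x$ there is one tile per scale in a tree component, the sum over $\mb{P}\ni x$ has unboundedly many terms, so once you have separated the three factors by termwise absolute values you can no longer recover the randomised size $\RS_u$; you would need an $\ell^1$- (or at best, after Cauchy--Schwarz across scales, $\ell^2$-) sum of norms, which $\RS$ does not control in a general Banach space without lattice structure or cotype~$2$.

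The correct mechanism, and the one the paper uses, is to \emph{keep the trilinear coupling in $\mb{P}$} rather than estimate termwise: after dualising with unimodular coefficients $a_{\mb{P}}$ and expanding $F_0(\mb{P}_0)$ by the reconstruction identity, the pure-defect term is indeed handled exactly as you say (pointwise sum of $\|\DEF F_0\|$ against sups of the other two factors, giving $S^{(\infty,1)}\cdot S^\infty\cdot S^\infty$), but the reconstruction term must have its order of summation swapped so that the finer tritile $\mb{Q}$ is outermost; then, for fixed $\mb{Q}$ and $x$, the diagonal sum over coarser $\mb{P}\in T^0$ is rewritten as an expectation, $\sum_{\mb{P}}\Pi(y,A_{\mb{P}},B_{\mb{P}})=\E\,\Pi\bigl(y,\sum_{\mb{P}}\varepsilon_{\mb{P}}A_{\mb{P}},\sum_{\mb{P}'}\varepsilon_{\mb{P}'}B_{\mb{P}'}\bigr)$, using $\E[\varepsilon_{\mb{P}}\varepsilon_{\mb{P}'}]=\delta_{\mb{P}\mb{P}'}$, so that boundedness of $\Pi$ plus Cauchy--Schwarz in the probability variable yields precisely the product of the two randomised $L^2(\E)$ expressions defining $\RS$. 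The contraction principle enters only afterwards, to strip the coefficients $a_{\mb{P}}$ and the wave-packet overlaps $b_{\mb{P},\mb{Q}}=\langle w_{\mb{Q}_v};w_{\mb{P}_0}\rangle|I_{\mb{P}}|$ (which are bounded by $1$) from inside those Rademacher sums, followed by Cauchy--Schwarz in $x$ against $\sum_{\mb{Q}}\|\DEF F_0(\mb{Q}_v)\|\1_{I_{\mb{Q}}}(x)$. Without this bilinear randomisation step your argument does not close; relatedly, the asserted ``constant $1$, no loss'' is not what this argument (or the paper's) produces.
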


\begin{proof}
  First note that
  \begin{equation*}
    \begin{aligned}
      \sum_{\mb{P} \in T} \Bigl|\Pi^{*}\bigl(F_0(\mb{P}), F_1(\mb{P}), F_2(\mb{P})\bigr)\Bigr|\,|I_{\mb{P}}|
    &=
    \sum_{\mb{P} \in T} \Bigl|\Pi\bigl(F_0(\mb{P}_{0}), F_1(\mb{P}_{1}), F_2(\mb{P}_{2})\bigr)\Bigr|\,|I_{\mb{P}}|
    \\
    &
    \leq \sum_{u=0}^2 \sum_{\mb{P} \in T^u} \Bigl|\Pi \bigl(F_0(\mb{P}_{0}), F_1(\mb{P}_{1}), F_2(\mb{P}_{2})\bigr)\Bigr|\, |I_{\mb{P}}|,
  \end{aligned}
  \end{equation*}
  so it suffices to fix $u \in \{0,1,2\}$ and deal with the summands in the last entry individually. We concentrate on the case $u=0$; the other cases are analogous. We restrict the sum over $\mb{P}$ to
  \begin{equation*}
    \tPP_{N}=\Bigl\{\mb{P}\in\tPP \colon   |I_{\mb{P}}|>2^{-N} \Bigr\}
  \end{equation*}
  and we look for a bound independent of $N$, allowing us to conclude by standard limiting arguments.  For ease of notation we set $F_{0}^{N}(\mb{P}):=\1_{\tPP^{N}}(\mb{P})F_{0}(\mb{P})$.

  Fix a normalised sequence $a \in \ell^\infty(T^u;\CC)$ and estimate by duality
  \begin{equation*}
    \begin{aligned}
      &
       \sum_{\mb{P} \in T^0} \Bigl|\Pi\bigl( F_0^{N}(\mb{P}_{0}), F_1(\mb{P}_{1}), F_2(\mb{P}_{2})\bigr)\Bigr|\, |I_{\mb{P}}| 
      \\
      &
      = \sum_{\mb{P} \in T^0} a_{\mb{P}} \Pi\bigl(F_0^{N}(\mb{P}_{0}), F_1(\mb{P}_{1}), F_2(\mb{P}_{2})\bigr)\, |I_{\mb{P}}| 
      \\
        &\leq  \int_{I_T} \sum_{\mb{P} \in T^0}   a_{\mb{P}} \Pi\bigl(   \DEF F_0^{N}(\mb{P}_{0}) , F_1(\mb{P}_{1}), F_2(\mb{P}_{2})\bigr) \1_{I_\mb{P}}(x) \, \dd x \\ 
        & +  \sum_{\mb{P} \in T^0}  a_{\mb{P}} \Pi\Bigl(  \Bigl\langle\sum_{v=0}^{2} \sum_{\substack{\mb{Q}\in T^{v} \\ |I_{\mb{P}}|>|I_{\mb{Q}}|}} \DEF F_{0}^{N}(\mb{Q}_{v}) w_{\mb{Q}_{v}} |I_{\mb{Q}}| ;w_{\mb{P}_{0}}\Bigr\rangle , F_1(\mb{P}_{1}) , F_2(\mb{P}_{2})\Bigr)  \, |I_{\mb{P}}|.
      \end{aligned}
 \end{equation*}
 We bound the first summand as follows:
 \begin{equation*}
   \begin{aligned}
     &\int_{I_T} \sum_{\mb{P} \in T^0}  a_{\mb{P}} \Pi\bigl(   \DEF F_0(\mb{P}_{0}) , F_1(\mb{P}_{1}), F_2(\mb{P}_{2})\bigr) \1_{I_\mb{P}}(x) \, \dd x
     \\
     &\lesssim  |I_{T}|\; \bigl\| \| \DEF F_{0}  \|_{X_{0}^{3}}\bigr\|_{S^{(\infty,1)}}\;\bigl\|  \| F_{1} \|_{X_{1}^{3}}\bigr\|_{S^{\infty} }\; \bigl\|  \| F_{2} \|_{X_{2}^{3}}\bigr\|_{S^{\infty}}.
   \end{aligned}
 \end{equation*}
 As for the second summand,
 \begin{equation*}
   \begin{aligned}
     &  \sum_{\mb{P} \in T^0}
     a_{\mb{P}}\Pi\Bigl(
       \Bigl\langle
         \sum_{v=0}^{2} \sum_{\substack{\mb{Q}\in T^{v} \\ |I_{\mb{P}}|>|I_{\mb{Q}}|}}
         \DEF F_{0}^{N}(\mb{Q}_{v}) w_{\mb{Q}_{v}} |I_{\mb{Q}}| ; w_{\mb{P}_{0}}
       \Bigr\rangle
       , F_1(\mb{P}_{1}) , F_2(\mb{P}_{2})
     \Bigr)\, |I_{\mb{P}}|
   \\
   &
   =
 \sum_{\mb{P} \in T^0} a_{\mb{P}} \Bigl\langle\sum_{v=0}^{2} \sum_{\substack{\mb{Q}\in T^{v} \\ |I_{\mb{P}}|>|I_{\mb{Q}}|}}   \Pi\Bigl(  \DEF F_{0}^{N}(\mb{Q}_{v})  , F_1(\mb{P}_{1}) , F_2(\mb{P}_{2})\Bigr)  w_{\mb{Q}_{v}} |I_{\mb{Q}}| ;w_{\mb{P}_{0}}\Bigr\rangle \, |I_{\mb{P}}|
   \\
   &
   =
\sum_{v=0}^{2} \sum_{\mb{Q} \in T^v} \sum_{\substack{\mb{P}\in T^{0} \\ I_{\mb{P}}\supsetneq I_{\mb{Q}}}}   \Pi\Bigl(  \DEF F_{0}^{N}(\mb{Q}_{v})  , F_1(\mb{P}_{1}) , F_2(\mb{P}_{2})\Bigr)  a_{\mb{P}}\, b_{\mb{P},\mb{Q}} \, |I_{\mb{Q}}|
\end{aligned}
 \end{equation*}
 where the coefficients $b_{\mb{P},\mb{Q}} := \langle w_{\mb{Q}_{v}}; w_{\mb{P}_{0}}\rangle\, |I_{\mb{P}}|$ satisfy $|b_{\mb{P},\mb{Q}}|<1$. Letting $\varepsilon_{\mb{P}}$ be independent Rademacher variables, we have
 \begin{equation*}
   \begin{aligned}
     &
     \sum_{v=0}^{2} \sum_{\mb{Q} \in T^v} \sum_{\substack{\mb{P}\in T^{0} \\ I_{\mb{P}}\supsetneq I_{\mb{Q}}}}   \Pi\Bigl(  \DEF F_{0}^{N}(\mb{Q}_{v})  , F_1(\mb{P}_{1}) , F_2(\mb{P}_{2})\Bigr)  a_{\mb{P}}\, b_{\mb{P},\mb{Q}} \, |I_{\mb{Q}}|
\\
   &=
 \sum_{v=0}^{2} \sum_{\mb{Q} \in T^v}\int_{I_{T}} \sum_{\substack{\mb{P}\in T^{0} \\ I_{\mb{P}}\supsetneq I_{\mb{Q}}}}   \Pi\Bigl(  \DEF F_{0}^{N}(\mb{Q}_{v})  ,  a_{\mb{P}}\, b_{\mb{P},\mb{Q}} F_1(\mb{P}_{1})  \1_{I_{\mb{P}}}(x), F_2(\mb{P}_{2}) \1_{I_{\mb{P}}}(x)\Bigr) \1_{I_{\mb{Q}}}(x) \, \dd x
   \\
   &
   \begin{aligned}
     = \sum_{v=0}^{2} \sum_{\mb{Q} \in T^v}\int_{I_{T}} \E\; \Pi\biggl(  \DEF F_{0}^{N}(\mb{Q}_{v})  ,  &\sum_{\substack{\mb{P}\in T^{0} \\ I_{\mb{P}}\supsetneq I_{\mb{Q}}}}  \varepsilon_{\mb{P}} a_{\mb{P}}\, b_{\mb{P},\mb{Q}} F_1(\mb{P}_{1})  \1_{I_{\mb{P}}}(x),
     \\
     &\sum_{\substack{\mb{P}'\in T^{0} \\ I_{\mb{P}'}\supsetneq I_{\mb{Q}}}} \varepsilon_{\mb{P}'}  F_2(\mb{P}'_{2}) \1_{I_{\mb{P}'}}(x)\biggr) \,\1_{I_{\mb{Q}}}(x) \, \dd x
   \end{aligned}
   \\
   &
   \begin{aligned}
\lesssim\sum_{v=0}^{2} \sum_{\mb{Q} \in T^v} \int_{I_{T}}&\|  \DEF F_{0}^{N}(\mb{Q}_{v})  \|_{X_{0}}
   \prod_{u=1,2} \Bigl( \E \| \sum_{\substack{\mb{P}\in T^{0} \\ I_{\mb{P}}\supsetneq I_{\mb{Q}}}}  \varepsilon_{\mb{P}}  F_u(\mb{P}_{u})  \1_{I_{\mb{P}}}(x) \|_{X_{u}}^{2} \Bigr)^{1/2} \1_{I_{\mb{Q}}}(x) \, \dd x
   \\
 \end{aligned}
   \\
   &
   \begin{aligned}
     \lesssim\sum_{v=0}^{2}  \int_{I_{T}}&\Bigl(  \sum_{\mb{Q} \in T^v} \|  \DEF F_{0}^{N}(\mb{Q}_{v})  \|_{X_{0}}\1_{I_{\mb{Q}}}(x)\Bigr) \prod_{u=1}^2 \sup_{\mb{Q}\in T^{v}}\Bigl( \E \Bigl\| \sum_{\substack{\mb{P}\in T^{0} \\ I_{\mb{P}}\supsetneq I_{\mb{Q}}}}  \varepsilon_{\mb{P}} F_u(\mb{P}_{u})  \1_{I_{\mb{P}}}(x) \Bigr\|_{X_{u}}^{2} \Bigr)^{1/2} \, \dd x.
 \end{aligned}
   \end{aligned}
 \end{equation*}
 Finally, applying Cauchy--Schwartz to the last entry we obtain
\begin{equation*}
   \begin{aligned}
     &  \sum_{\mb{P} \in T^0}
     a_{\mb{P}}\Pi\Bigl(
       \Bigl\langle
         \sum_{v=0}^{2} \sum_{\substack{\mb{Q}\in T^{v} \\ |I_{\mb{P}}|>|I_{\mb{Q}}|}}
         \DEF F_{0}^{N}(\mb{Q}_{v}) w_{\mb{Q}_{v}} |I_{\mb{Q}}| ; w_{\mb{P}_{0}}
       \Bigr\rangle
       , F_1(\mb{P}_{1}) , F_2(\mb{P}_{2})
     \Bigr)\, |I_{\mb{P}}|
   \\
   &
   \lesssim \big\| \| \DEF F_{0} \|_{X_{0}^{3}} \big\|_{S^{(\infty,1)}(T)} \| F_{1} \|_{\RS(T)} \| F_{2} \|_{\RS(T)}
 \end{aligned}
\end{equation*}
concluding the proof.
\end{proof}

This H\"older inequality, combined with Radon--Nikodym domination, leads to the following result.

\begin{cor}\label{cor:rand-holder}
  Let $X_0,X_1,X_2$, $\Pi$, and $\Pi^*$ be as above.
  Let $(p_0,p_1,p_2)$ and $(q_0,q_1,q_2)$ be Hölder triples of exponents.
  Then
  \begin{equation*}
    \sum_{\mb{P} \in \tPP} |\Pi^*(F_0(\mb{P}),F_1(\mb{P}),F_2(\mb{P}))||I_{\mb{P}}| \lesssim \prod_{v=0}^2 \| F_v \|_{L^{p_v}_\mu \RS}
  \end{equation*}
  and
  \begin{equation}\label{eqn:iterated-RN-dom}
    \sum_{\mb{P} \in \tPP} |\Pi^*(F_0(\mb{P}),F_1(\mb{P}),F_2(\mb{P}))||I_{\mb{P}}| \lesssim \prod_{v=0}^2 \| F_v \|_{L^{p_v}_\nu \sL^{q_v}_\mu \RS}
  \end{equation}
  for all $F_v \in \Bor(\tPP;X_v^3)$.
\end{cor}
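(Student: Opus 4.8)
The plan is to deduce both inequalities by chaining together three results from this section: the size-H\"older inequality (Proposition~\ref{prop:new-size-holder}), the outer H\"older inequalities (Proposition~\ref{prop:outer-holder} and Corollary~\ref{cor:iterated-holder}), and the Radon--Nikodym-type domination (Proposition~\ref{prop:outer-RN}). The starting observation is that both left-hand sides equal $\int_{\tPP} |\Pi^*(F_0,F_1,F_2)| \, \dd\mathfrak{m}$, where $\mathfrak{m}$ is the Borel measure on $\tPP$ assigning mass $|I_{\mb{P}}|$ to each tritile $\mb{P}$; and that $\Pi^*$ is a bounded trilinear form, being $\Pi$ precomposed with coordinate projections, so the hypotheses on trilinear maps in the cited statements are satisfied.

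For the first estimate I would first apply Proposition~\ref{prop:outer-holder} with the input outer structures $(\TT,\mu,X_v^3,\RS)$, the output outer structure $(\TT,\mu,\CC,S^1)$, the trilinear form $\Pi^*$, and the size-H\"older inequality of Proposition~\ref{prop:new-size-holder}. Since $(p_0,p_1,p_2)$ is a H\"older triple, $\sum_v p_v^{-1}=1$, and this gives $\|\Pi^*(F_0,F_1,F_2)\|_{L^1_\mu S^1} \lesssim \prod_v \|F_v\|_{L^{p_v}_\mu \RS}$. It then remains to show the Radon--Nikodym domination $\int_{\tPP}|G|\,\dd\mathfrak{m} \lesssim \|G\|_{L^1_\mu S^1}$ for $G \in \Bor(\tPP;\CC)$, which follows from Proposition~\ref{prop:outer-RN}: unwinding the definitions gives $\int_T |G|\,\dd\mathfrak{m} = \sum_{\mb{P}\in T}|G(\mb{P})|\,|I_{\mb{P}}| = \mu(T)\,\|G\|_{S^1(T)}$, so the compatibility estimate holds with constant~$1$; the space $\tPP = \bigcup_{\mb{P}} T(\mb{P})$ is a countable union of trees; and absolute continuity is automatic, since every tree $T(\mb{Q})$ containing $\mb{P}$ has $I_{\mb{Q}}\supseteq I_{\mb{P}}$ and hence $\mu$-mass at least $|I_{\mb{P}}|>0$, so $\mu$ vanishes only on the empty set. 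Combining the two displays proves the first inequality.

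The iterated estimate has the same structure, replacing Proposition~\ref{prop:outer-holder} by Corollary~\ref{cor:iterated-holder} (with $p=q=1$, using that both $(p_v)$ and $(q_v)$ are H\"older triples) to obtain $\|\Pi^*(F_0,F_1,F_2)\|_{L^1_\nu\sL^1_\mu S^1} \lesssim \prod_v \|F_v\|_{L^{p_v}_\nu\sL^{q_v}_\mu\RS}$, and then invoking Proposition~\ref{prop:outer-RN} for the iterated outer structure $(\DD,\nu,\CC,\sL^1_\mu S^1)$. Here the required compatibility estimate $\int_D|G|\,\dd\mathfrak{m} \lesssim \|G\|_{\sL^1_\mu S^1(D)}\,\nu(D)$ is precisely the domination already established, applied to $\1_D G$, once one notes that $\nu(D)\,\|G\|_{\sL^1_\mu S^1(D)} = \|\1_D G\|_{L^1_\mu S^1}$ by the definition of $\sL^1_\mu S^1$; the remaining hypotheses ($\tPP = \bigcup_{\mb{P}} D(I_{\mb{P}})$ and positivity of the $\nu$-mass of singletons, since any strip containing $\mb{P}$ has top interval $\supseteq I_{\mb{P}}$) are checked as before. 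I expect no serious obstacle; the only point meriting a moment's care is checking that $\mathfrak{m}$ is compatible with $\mu$ and $\nu$ in the exact sense required by Proposition~\ref{prop:outer-RN}, and in particular that absolute continuity is free because $\tPP$, being countable with all singletons of positive outer measure, carries no $\mu$- or $\nu$-null points.
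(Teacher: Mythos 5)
Your proposal is correct and follows essentially the same route as the paper: the size-H\"older inequality of Proposition \ref{prop:new-size-holder} fed into the outer H\"older inequalities (Proposition \ref{prop:outer-holder}, Corollary \ref{cor:iterated-holder}), combined with Radon--Nikodym domination (Proposition \ref{prop:outer-RN}) at the $\mu$-level and, for the iterated bound, again at the $\nu$-level with the strip compatibility $\sum_{\mb{P}\in D}|G(\mb{P})|\,|I_{\mb{P}}| \leq \nu(D)\,\|G\|_{\sL^1_\mu S^1(D)}$ verified exactly as in the paper. The only cosmetic difference is that you invoke Corollary \ref{cor:iterated-holder} directly where the paper spells out the intermediate strip-level size-H\"older \eqref{eqn:iterated-size-holder}, and you are somewhat more explicit about the countable-cover and absolute-continuity hypotheses of Proposition \ref{prop:outer-RN}, which is harmless.
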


\begin{proof} 
  The first estimate follows from combining the outer H\"older inequality (Proposition \ref{prop:outer-holder}) with Radon--Nikodym domination (Proposition \ref{prop:outer-RN}), using Proposition \ref{prop:new-size-holder}.
  For the second, we have
  \begin{equation*}
    \| \Pi^*(F_0,F_1,F_2) \|_{L^1_\mu S^1} \lesssim \prod_{v=0}^2 \| F_v \|_{L^{q_v}_\mu \RS_{v}}
  \end{equation*}
  as a consequence of the outer Hölder inequality.
  By multiplying by characteristic functions of strips this implies
  \begin{equation}\label{eqn:iterated-size-holder}
    \| \Pi^*(F_0, F_1, F_2) \|_{\sL^1_\mu S^1} \lesssim \prod_{v=0}^2 \| F_v \|_{\sL^{q_v}_\mu \RS_{v}}.
  \end{equation}
  For each $\map{F}{\tPP}{\CC}$ and each strip $D \in \DD$, Radon--Nikodym domination yields
  \begin{equation*}
    \sum_{\mb{P} \in \tPP} |\1_D F(\mb{P})|\,|I_{\mb{P}}|  \leq \| \1_{\mc{D}} F\|_{L^1_\mu S^1} = \nu(D) \|F \|_{\sL^1_\mu S^1(D)}, 
  \end{equation*}
  so that
  \begin{equation*}
    \frac{1}{\nu(D)} \sum_{\mb{P} \in D} |F(\mb{P})|\,|I_{\mb{P}}|  \leq \|f\|_{\sL^1_\mu S^1(D)}.
  \end{equation*}
  Applying Radon--Nikodym domination and the iterated outer H\"older inequality (Corollary \ref{cor:iterated-holder}) completes the proof.
\end{proof}

\begin{rmk}\label{rmk:tritile-reduction}
  The tritile form associated to $\Pi \colon X_0 \times X_1 \times X_2 \to \CC$ can be written as
 \begin{equation*}
   \Lambda_\Pi(f_0,f_1,f_2) = \sum_{\mb{P} \in \tPP} \Pi^*\Big(\mc{E}[f_0](\mb{P}), \mc{E}[f_1](\mb{P}), \mc{E}[f_2](\mb{P})\Big),
 \end{equation*}
 so by Corollary \ref{cor:rand-holder} we have
 \begin{equation*}
   |\Lambda_\Pi(f_0,f_1,f_2)|
   \lesssim \prod_{u=0}^2 \|\mc{E}[f_u]\|_{L_\nu^{p_u} \sL_\mu^{q_u} \RS}.
 \end{equation*}
 Thus given a H\"older triple $(p_u)_{u=0}^2$, in order to prove the $L^p$-bounds
 \begin{equation*}
   |\Lambda_\Pi(f_0,f_1,f_2)| \lesssim \prod_{u=0}^2 \|f_u\|_{L^{p_u}(\WW;X_u)} \qquad \forall f_u \in \Sch(\WW;X_u),
 \end{equation*}
 it suffices to find a H\"older triple $(q_0,q_1,q_2)$ such that
 \begin{equation*}
   \|\mc{E}[f]\|_{L_\nu^{p_u} \sL_\mu^{q_u} \RS} \lesssim \|f\|_{L^{p_u}(\WW;X_u)} \qquad \forall f \in \Sch(\WW;X_u)
 \end{equation*}
 for all $u \in \{0,1,2\}$.
\end{rmk}

\subsection{Size domination}\label{sec:size-dom}

In this section we prove a `size domination' theorem, which allows us to control the randomised size $\RS$ of an embedded function $\Emb[f]$ by the deterministic size $S^\infty$.
This uses the UMD property of the Banach space under consideration; this property is not used anywhere else.
Thus in proving Theorem \ref{thm:intro-embeddings} we may replace the $\RS$ with $S^\infty$, which makes life easier.

\begin{thm}\label{thm:size-domination}
  Let $X$ be a UMD Banach space.
  Then for all convex $\AA\subset \tPP$,
  \begin{equation*}
    \bigl\|\1_{\AA} \mc{E}[f]\bigr\|_{\RS} \lesssim \bigl\|\1_{\AA} \| \mc{E}[f] \|_{X^{3}}\bigr\|_{S^\infty}, \qquad \forall f \in \Sch(\WW;X) 
  \end{equation*}
  with implicit constant independent of $\AA$.
\end{thm}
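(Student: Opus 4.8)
The plan is to estimate, uniformly in the tree $T$ and the convex set $\AA$, each of the three terms comprising $\RS(T)$ (Definition \ref{def:randomized-size}) by the quantity $M:=\bigl\|\1_{\AA}\,\|\Emb[f]\|_{X^{3}}\bigr\|_{S^{\infty}}$. The $S^{\infty}$-term $\bigl\|\|\1_{\AA}\Emb[f]\|_{X^{3}}\bigr\|_{S^{\infty}(T)}$ is $\le M$ by definition. For the defect term I would use that $\DEF\Emb[f]=0$ (Lemma \ref{lem:wave-packet-interaction}), which gives, for every tile $\mb{P}_{v}$,
\begin{equation*}
  \DEF(\1_{\AA}\Emb[f])(\mb{P}_{v})=\sum_{Q\in\vs{\mb{P}}}\bigl(\1_{\AA}(\mb{P})-\1_{\AA}(\mb{Q})\bigr)\,\Emb[f](Q)\,\langle w_{Q};w_{\mb{P}_{v}}\rangle\,|I_{Q}|,
\end{equation*}
$\mb{Q}$ denoting the tritile of $Q$. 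This vanishes unless $\mb{P}$ is a boundary tritile of $\AA$, and each coefficient $\Emb[f](Q)$ that occurs can, since $Q\subset\bigcup_{w}\mb{P}_{w}$, be rewritten by Lemma \ref{lem:wave-packet-interaction} as a bounded combination of the components of $\Emb[f](\mb{P})$, or directly as a component of $\Emb[f](\mb{Q})$ since $Q$ is a tile of $\mb{Q}$; using whichever of $\mb{P},\mb{Q}$ lies in $\AA$, and $|\langle w_{Q};w_{\mb{P}_{v}}\rangle|\,|I_{Q}|\le 1$, one gets $\|\DEF(\1_{\AA}\Emb[f])(\mb{P})\|_{X^{3}}\lesssim M$ on boundary tritiles and $0$ elsewhere. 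By convexity of $\AA$ and the remark following Proposition \ref{prop:func-reconstruction}, at most two boundary tritiles $\mb{P}$ satisfy $x\in I_{\mb{P}}$ for any fixed $x$, so unwinding the definition of $S^{(\infty,1)}$ bounds this term by $\lesssim M$.

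The randomised terms $\RS_{u}$ are the crux and the only place UMD enters. After removing the unimodular Walsh exponentials via Kahane's contraction principle and applying Kahane--Khintchine and Fubini, the task becomes, for $v\ne u$,
\begin{equation*}
  |I_{T}|^{-1/2}\,\E\Bigl\|\sum_{\mb{P}\in T^{u}}\varepsilon_{\mb{P}}\,\1_{\AA}(\mb{P})\,\langle f;w_{\mb{P}_{v}}\rangle\,w_{\mb{P}_{v}}\,|I_{\mb{P}}|\Bigr\|_{L^{2}(I_{T};X)}\lesssim M.
\end{equation*}
The idea is to recognise $\1_{\AA}\Emb[f]$, restricted to the lacunary tiles of $T$, as (the embedding of) a function that is $\lesssim M$ in $L^{2}(I_{T};X)$, up to a genuine-defect contribution. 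Concretely, applying the function reconstruction formula (Proposition \ref{prop:func-reconstruction}) to $F=\1_{\AA}\Emb[f]$, together with the pointwise bound $\|\DEF F(Q)\|_{X^{3}}\lesssim M$ on tiles $Q$ of boundary tritiles and the fact that only $\lesssim 1$ boundary tritiles cover a given point, one finds that the `defect superposition' $h:=\sum_{Q\in T}\DEF F(Q)\,w_{Q}\,|I_{Q}|$ satisfies $\|h\|_{L^{\infty}(I_{T};X)}\lesssim M$, hence $\|h\|_{L^{2}(I_{T};X)}\lesssim M|I_{T}|^{1/2}$. The sum of $\varepsilon$-weighted wave-packet projections of $h$ over all lacunary tiles of $T$ is precisely the lacunary tree projection of Proposition \ref{prop:lacunary-tree-proj}, which (via UMD and the result of Cl\'ement et al.) is bounded by $\|h\|_{L^{2}(I_{T};X)}\lesssim M|I_{T}|^{1/2}$; the genuine-defect part of the sum, supported over $\lesssim 1$ boundary tritiles per point, is estimated directly by $\lesssim M|I_{T}|^{1/2}$. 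Summing over $u$ and $v\ne u$, using that omitting terms from a Rademacher sum does not increase its expected norm, closes the estimate.

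I expect the main obstacle to be the careful bookkeeping required in this last step: the reconstruction formula holds only at a finite scale $N$ and carries a scale-$3^{-(N+1)}$ remainder term, so one must reconcile the scale-truncated sums $\sum_{|I_{Q}|<|I_{\mb{P}}|}\DEF F(Q)w_{Q}|I_{Q}|$ with the clean function $h$ and with the lacunary tree projection bound while keeping every error term controlled by $M$; this forces one to invoke convexity of $\AA$ repeatedly, to guarantee that only boundedly many boundary tritiles ever contribute over a fixed point.
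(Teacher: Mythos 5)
Your treatment of the first two components of $\RS(T)$ is correct and is essentially the paper's own argument (Proposition \ref{prop:defect-size-domination}): since $\DEF\Emb[f]=0$, the defect of $\1_{\AA}\Emb[f]$ is supported on boundary tritiles of $\AA$, convexity gives at most two such tritiles over any fixed point, and the $S^{(\infty,1)}$ bound follows. The gap is in the randomised sizes $\RS_u$. The scale-$3^{-(N+1)}$ remainder in Proposition \ref{prop:func-reconstruction} is not a bookkeeping issue to be "reconciled" with the defect superposition $h=\sum_{Q\in T}\DEF F(Q)\,w_Q\,|I_Q|$: it carries the main term. Take $\AA=\tPP$ (or any convex $\AA$ containing the relevant portion of $T$); then $\DEF(\1_{\AA}\Emb[f])\equiv 0$, so $h=0$, while $F(\mb{P}_v)=\langle f;w_{\mb{P}_v}\rangle$ is the full coefficient and the reconstruction formula reduces to $F(P)=R_N(P)$ for every $N$. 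Your decomposition then says nothing, yet the bound $\|\Emb[f]\|_{\RS_u(T)}\lesssim\sup_{\mb{P}\in T}\|\Emb[f](\mb{P})\|_{X^3}$ is exactly the nontrivial statement to be proven. More generally the remainder does not tend to $0$ as $N\to\infty$ (at fine scales the surviving coefficients are essentially local averages of $f$), and the only uniform per-tile bound available for it is $\lesssim M$ with no decay in the scale gap, which cannot be summed over the unboundedly many scales of $\mb{P}\in T^u$ entering the Rademacher sum; nor can it be bounded by $\|f\|_{L^\infty}$, since the whole point is to beat that by $M$.

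What is missing is the mechanism that repackages the fine-scale data of $f$ on $T\cap\AA$ as a single function whose $L^\infty$ norm is controlled by $M$; this is the paper's Lemma \ref{lem:convex-proj}. There convexity of $\AA$ is used to build the partition $\bar{\mc{J}}$ of $I_T$ into minimal intervals, one sets $g=\sum_{J\in\bar{\mc{J}}}\langle f;w_{Q_J}\rangle w_{Q_J}|J|$ with $\xi_T\in\omega_{Q_J}$, and one proves both that $\Emb[g]=\Emb[f]$ on $T\cap\AA$ (truncated to $\tPP_N$) and, via the basis expansion Lemma \ref{lem:wave-packet-interaction} applied to the tritile $\mb{P}(J)\in\AA$ sitting above each minimal $J$, that $\|g\|_{L^\infty(I_T;X)}\lesssim M$. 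Only then is the UMD input (Proposition \ref{prop:lacunary-tree-proj}, via Kahane--Khintchine and the contraction principle, which you invoke correctly) applied to a function of size $M$ rather than $\|f\|_{L^\infty}$. Your $h$ plays no such role, so the proposed proof of the lacunary estimate does not close; you would need to add a construction equivalent to Lemma \ref{lem:convex-proj} to handle the remainder.
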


Convexity of a set of tritiles is defined in Definition \ref{defn:convex}.
Any of the standard norms on $X^3$ will do the job here, but we use the $\ell^\infty$-norm
\begin{equation*}
  \|  (x_0,x_1,x_2) \|_{X^{3}} = \sup_{u \in \{0,1,2\}} \|  x_u \|_{X}.
\end{equation*}
We will prove Theorem \ref{thm:size-domination} later in the section.
First we show how it implies outer-$L^{p}$ quasinorm bounds.
The argument is standard, but we include it to show the role played by convexity.

\begin{cor}\label{cor:outer-Lp-domination}
  Let $X$ be a UMD Banach space.
  Then for all convex $\AA\subset \tPP$, all $u \in \{0,1,2\}$, and all $p,q \in (1,\infty]$, we have the bounds
    \begin{equation} \label{eq:non-iterated-outer-Lp-domination}
    \bigl\|\1_{\AA}\,\Emb[f] \bigr\|_{L_\mu^p \RS} \lesssim \bigl\|\1_{\AA} \|\Emb[f] \|_{X^{3}} \bigr\|_{L_\mu^p S^{\infty}} \qquad \forall f \in \Sch(\WW;X)
  \end{equation}
 and
    \begin{equation} \label{eq:iterated-outer-Lp-domination}
    \bigl\|\1_{\AA}\,\Emb[f] \bigl\|_{L^{p}_{\nu}\sL_\mu^q \RS} \lesssim \bigl\|\1_{\AA} \|\Emb[f] \|_{X^{3}} \bigr\|_{L_{\nu}^{p} \sL_\mu^q S^{\infty}} \qquad \forall f \in \Sch(\WW;X).
  \end{equation}
\end{cor}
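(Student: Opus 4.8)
The plan is to push the superlevel-measure estimate of Theorem \ref{thm:size-domination} through the layer-cake formula defining the outer-$L^p$ quasinorms, with the convexity hypothesis on $\AA$ doing the essential work at each stage. The one structural fact I would isolate first is that the deterministic size $\|\,\|{\cdot}\|_{X^3}\,\|_{S^\infty}$ is monotone under restriction to an \emph{arbitrary} subset $B\subseteq\tPP$, not merely to $B\in\TT^{\cup}$: indeed $\bigl\|\1_B F\bigr\|_{S^\infty(T)}\le\bigl\|F\bigr\|_{S^\infty(T)}$ for every $B$, directly from the defining supremum, and the same therefore holds for the iterated sizes $\sL^q_\mu S^\infty$ and $L^p_\nu\sL^q_\mu S^\infty$. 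A standard consequence is that in computing an outer superlevel measure such as $\mu(\|\,\|\Emb[f]\|_{X^3}\,\|_{S^\infty}>\lambda)$ one may restrict the competing ``bad sets'' to lie in $\TT^{\cup}$: given any admissible $A$ with $\mu(A)<\infty$ (if no such $A$ exists there is nothing to prove at that level), cover it by trees $A'=\bigcup_n T_n\supseteq A$ with $\sum_n\mu(T_n)$ arbitrarily close to $\mu(A)$, and observe that $\|\1_{\tPP\sm A'}\,\|\Emb[f]\|_{X^3}\,\|_{S^\infty}\le\|\1_{\tPP\sm A}\,\|\Emb[f]\|_{X^3}\,\|_{S^\infty}\le\lambda$ by the monotonicity just noted.

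For \eqref{eq:non-iterated-outer-Lp-domination}, fix $\lambda>0$ and let $A'=\bigcup_n T_n\in\TT^{\cup}$ be such a bad set for $\|\,\|\1_\AA\Emb[f]\|_{X^3}\,\|_{S^\infty}$ at level $\lambda$. Then $\tPP\sm A'=\bigcap_n(\tPP\sm T_n)$ is convex, being an intersection of complements of trees, and hence so is $\AA\cap(\tPP\sm A')$. Since $\1_{\tPP\sm A'}\1_\AA\Emb[f]=\1_{\AA\cap(\tPP\sm A')}\Emb[f]$, applying Theorem \ref{thm:size-domination} to this convex set gives
\[
  \bigl\|\1_{\tPP\sm A'}\,\1_\AA\,\Emb[f]\bigr\|_{\RS}
  \lesssim \bigl\|\1_{\tPP\sm A'}\,\|\1_\AA\Emb[f]\|_{X^3}\bigr\|_{S^\infty}\le\lambda,
\]
so $A'$ is an admissible bad set for $\1_\AA\Emb[f]$ in the size $\RS$ at level $\lesssim\lambda$. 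Taking the infimum over $A'$ yields $\mu\bigl(\|\1_\AA\Emb[f]\|_{\RS}>C\lambda\bigr)\lesssim\mu\bigl(\|\,\|\1_\AA\Emb[f]\|_{X^3}\,\|_{S^\infty}>\lambda\bigr)$ with $C$ the implicit constant of Theorem \ref{thm:size-domination}; inserting this into the layer-cake integral defining $\|\cdot\|_{L^p_\mu\RS}$ and rescaling in $\lambda$ proves \eqref{eq:non-iterated-outer-Lp-domination} for $p\in(1,\infty)$, while $p=\infty$ is Theorem \ref{thm:size-domination} itself.

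For \eqref{eq:iterated-outer-Lp-domination} I would run the same scheme one level up, using the already-proven bound \eqref{eq:non-iterated-outer-Lp-domination} with exponent $q$ in place of Theorem \ref{thm:size-domination}. By the monotonicity above, in computing $\nu(\|\,\|\1_\AA\Emb[f]\|_{X^3}\,\|_{\sL^q_\mu S^\infty}>\lambda)$ one may take bad sets $B\in\DD^{\cup}$; for such $B$ the set $\tPP\sm B$ is convex, being an intersection of complements of strips, and for any strip $D$ the set $D\cap\AA\cap(\tPP\sm B)$ is then convex as well since strips are convex. Hence \eqref{eq:non-iterated-outer-Lp-domination} gives
\[
  \bigl\|\1_D\,\1_{\AA\cap(\tPP\sm B)}\,\Emb[f]\bigr\|_{L^q_\mu\RS}
  \lesssim \bigl\|\1_D\,\|\1_{\AA\cap(\tPP\sm B)}\Emb[f]\|_{X^3}\bigr\|_{L^q_\mu S^\infty};
\]
dividing by $|I_D|^{1/q}$ and taking the supremum over $D\in\DD$ shows $\|\1_{\AA\cap(\tPP\sm B)}\Emb[f]\|_{\sL^q_\mu\RS}\lesssim\|\1_{\tPP\sm B}\,\|\1_\AA\Emb[f]\|_{X^3}\|_{\sL^q_\mu S^\infty}\le\lambda$, so $B$ is admissible for $\1_\AA\Emb[f]$ in $\sL^q_\mu\RS$ at level $\lesssim\lambda$, and the layer-cake formula together with the rescaling in $\lambda$ finishes the proof. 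The only mildly delicate point in the whole argument is the bookkeeping that keeps every auxiliary set convex---complements of trees and strips, and their intersections with $\AA$ and, in the iterated step, with an arbitrary strip $D$---so that Theorem \ref{thm:size-domination} and \eqref{eq:non-iterated-outer-Lp-domination} remain applicable; no genuinely new estimate is needed.
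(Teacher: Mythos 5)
Your argument is correct and is essentially the paper's own proof: both reduce the superlevel-measure bad sets to countable unions of trees (resp.\ strips), use that their complements---and intersections with $\AA$ and with strips---are convex, and then apply Theorem \ref{thm:size-domination} (resp.\ the already-proved non-iterated bound) on those convex sets. The only cosmetic difference is that you compare superlevel measures at each fixed $\lambda$ and then layer-cake, while the paper selects bad sets at dyadic levels $2^n$ and sums; the two bookkeepings are interchangeable.
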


\begin{proof}
  Let us show that (\ref{eq:non-iterated-outer-Lp-domination}) holds.
  Assume that the right hand side of the inequality is finite. Then for each $n\in\Z$ there exists a countable union of trees $E_{n}=\bigcup_{i\in\N}T_{n,i}$ such that
  \begin{equation*}
    \sum_{n\in\Z} \mu(E_{n}) 2^{pn} \lesssim_{p} \|\1_{\AA} \|\Emb[f] \|_{X^{3}}\|_{L_\mu^{p} S^{\infty}}^{p} \quad \text{and} \quad \| \1_{\AA\setminus E_{n}} \| \Emb[f] \|_{X^{3}} \|_{S^\infty} \leq 2^{n}.
  \end{equation*}
  For each $n$ and $i$ the set $\tPP \setminus T_{n,i} $ is convex, and thus so is $\tPP \setminus E_{n} = \bigcap_{i\in\N}\bigl( \tPP \setminus T_{n,i}\bigr)$.
  Theorem \ref{thm:size-domination} implies that
  \begin{equation*}
     \| \1_{\AA\setminus E_{n} } \Emb[f] \|_{\RS} \lesssim 2^{n},
  \end{equation*}
  so by the definition of the outer-$L^{p}$ quasinorms it holds that
  \begin{equation*}
    \|\1_{\AA\setminus E_{n} } \Emb[f] \|_{L^{p}_{\mu}\RS}^{p} \lesssim \sum_{n\in\Z} \mu(E_{n}) 2^{pn}
  \end{equation*}
  as required.
  Similar reasoning yields the iterated bounds \eqref{eq:iterated-outer-Lp-domination}; it suffices to recall that strips and their complements are convex. 
\end{proof}

The proof of Theorem \ref{thm:size-domination} relies on the following lemma.
\begin{lem}\label{lem:convex-proj}
  Let $X$ be a Banach space and $f \in \Sch(\WW;X)$.  Let $T$ be a tree and $\AA$ a finite convex  set.  Then there exists a function $g \in \Sch(\WW;X)$ supported on $I_T$ such that
  \begin{equation}\label{eq:g-coeff}
    \Emb[g](\mb{P})=\Emb[f](\mb{P}) \qquad \forall \mb{P} \in T\cap\AA
  \end{equation}
  and
  \begin{equation}\label{eq:g-bound}
    \|g\|_{L^{\infty}(I_{T};X)} \lesssim \| \1_{\AA}  \| \Emb[f] \|_{X^{3}} \|_{S^\infty(T)}.
  \end{equation}
\end{lem}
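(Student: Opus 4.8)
The plan is to build $g$ as a (modulated) triadic conditional expectation of $f$ onto a single partition of a portion of $I_T$, the partition being read off from the set $T\cap\AA$.

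\textbf{Reduction.} First I would apply the modulation $\Mod_{-\xi_T}$, which replaces $f$ by $\Mod_{-\xi_T}f$, the tree $T$ by the tree with top $\Mod_{-\xi_T}\mb{P}_T$ (whose top frequency is $0$), and $\AA$ by $\Mod_{-\xi_T}\AA$ (still convex and finite), while leaving every quantity in the statement invariant: $\Mod_{-\xi_T}$ is an $L^\infty$-isometry, it preserves supports and the Schwartz class, and it maps wave packets to wave packets up to unimodular constants, exactly as in the reduction at the start of the proof of Proposition \ref{prop:lacunary-tree-proj}. So assume $\xi_T=0$.

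\textbf{Dictionary.} The tritiles of $T$ are in order-preserving bijection with the triadic subintervals $J\subseteq I_T$ via $J\mapsto\mb{Q}_J$, where $\mb{Q}_J$ is the unique tritile with $I_{\mb{Q}_J}=J$ and $\omega_{\mb{Q}_J}\supseteq\omega_T$; here order-preserving means $\mb{Q}_J\leq\mb{Q}_{J'}\iff J\subseteq J'$. Hence $\mathcal{J}:=\{J\subseteq I_T\text{ triadic}:\mb{Q}_J\in\AA\}$ is order-convex in the triadic tree on $I_T$ (if $J\subseteq J''\subseteq J'$ with $J,J'\in\mathcal{J}$ then $\mb{Q}_J\leq\mb{Q}_{J''}\leq\mb{Q}_{J'}$, so $\mb{Q}_{J''}\in\AA$ by convexity, and $\mb{Q}_{J''}\in T$). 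Since $\xi_T=0$, one of the three subtiles of $\mb{Q}_J$ has frequency interval containing $0$, and its wave packet is $|J|^{-1}\1_J$, so the corresponding entry of $\Emb[f](\mb{Q}_J)$ is $\langle f\rangle_J$; the other two wave packets equal $|J|^{-1}$ times the two nontrivial characters of the three-element group $\ch(J)$, and the three subtile wave packets form a basis of the space of functions on $J$ constant on $\ch(J)$, the change of basis to $(\1_{J'})_{J'\in\ch(J)}$ being a scalar multiple of the $\pw\times\pw$ discrete Fourier matrix, which is invertible with inverse norm bounded by an absolute constant. Thus for each $J\in\mathcal{J}$ the triples $\Emb[f](\mb{Q}_J)$ and $(\langle f\rangle_{J'})_{J'\in\ch(J)}$ determine one another, with
\[
  \max_{J'\in\ch(J)}\|\langle f\rangle_{J'}\|_X\lesssim\|\Emb[f](\mb{Q}_J)\|_{X^3},
\]
and matching $\Emb[g](\mb{Q}_J)=\Emb[f](\mb{Q}_J)$ is equivalent to matching $\langle g\rangle_{J'}=\langle f\rangle_{J'}$ for all $J'\in\ch(J)$.

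\textbf{Construction and verification.} Set $\mathcal{J}^+:=\{J':\pa(J')\in\mathcal{J}\}\cup(\{I_T\}\cap\mathcal{J})$ and let $\mathcal{L}$ be the minimal elements of $\mathcal{J}^+$. One checks that $\mathcal{J}^+$ is order-convex, that $\mathcal{L}$ consists of pairwise disjoint triadic intervals with $\bigcup\mathcal{L}=\bigcup\mathcal{J}^+\subseteq I_T$, that every $J\in\mathcal{J}$ is a union of elements of $\mathcal{L}$, and that every $L\in\mathcal{L}$ has $\pa(L)\in\mathcal{J}$ (and $L\neq I_T$). I then define $g:=\sum_{L\in\mathcal{L}}\langle f\rangle_L\1_L$, the triadic conditional expectation of $f$ onto the $\sigma$-algebra generated by $\mathcal{L}$, extended by $0$ off $\bigcup\mathcal{L}$. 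Then $\spt g\subseteq I_T$; $g\in\Sch(\WW;X)$ because it is a finite step function whose steps are triadic intervals of a fixed minimal size (using that $\AA$, hence $\mathcal{J}$ and $\mathcal{L}$, are finite); and $\langle g\rangle_{J'}=\langle f\rangle_{J'}$ for every $J\in\mathcal{J}$, $J'\in\ch(J)$ since $J'$ is a union of elements of $\mathcal{L}$, which by the dictionary gives \eqref{eq:g-coeff}. For \eqref{eq:g-bound}, note $\|g\|_{L^\infty(I_T;X)}=\max_{L\in\mathcal{L}}\|\langle f\rangle_L\|_X$, and each $L$ is a child of some $\hat{L}\in\mathcal{J}$, so by the displayed inequality $\|\langle f\rangle_L\|_X\lesssim\|\Emb[f](\mb{Q}_{\hat L})\|_{X^3}\leq\sup_{\mb{P}\in T\cap\AA}\|\Emb[f](\mb{P})\|_{X^3}=\|\1_{\AA}\,\|\Emb[f]\|_{X^3}\|_{S^\infty(T)}$. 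Undoing the reduction (multiplying by a unimodular Walsh exponential, which is locally constant at a fine enough scale) finishes the proof. The main obstacle is the bookkeeping of the previous sentence cluster: verifying that $\mathcal{J}^+$ is order-convex and that $\mathcal{L}$ is simultaneously fine enough to recover every coefficient demanded by $T\cap\AA$ (via the children of elements of $\mathcal{J}$) and coarse enough that the resulting conditional expectation is controlled by the $S^\infty$-size of $\Emb[f]$ restricted to $\AA$; the analytic input (invertibility of the $\pw\times\pw$ Fourier matrix, the trivial $L^\infty$-bound for a conditional expectation) is routine.
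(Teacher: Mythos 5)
Your construction is correct and is essentially the paper's proof seen after modulating the tree to top frequency zero: your $g=\sum_{L\in\mathcal{L}}\langle f\rangle_L\,\1_L$ coincides with the paper's $g=\sum_{J\in\bar{\mc J}}\langle f;w_{Q_J}\rangle w_{Q_J}|J|$ in that frame (your $\mathcal{L}$ being the minimal intervals in place of the paper's $\bar{\mc J}$), the child-average/Fourier-matrix dictionary playing the role of Lemma \ref{lem:wave-packet-interaction}, and your uniform bookkeeping even dispenses with the paper's case split on whether $\mb{P}_T\in\AA$. The only quibble is the sign of the reduction: with the paper's bilinear pairing and the tile action of Remark \ref{rmk:wave-packets-tiles}, bringing $\xi_T$ to $0$ corresponds to modulating $f$ by $+\xi_T$ rather than $-\xi_T$ — a harmless one-symbol fix (the reduction in Proposition \ref{prop:lacunary-tree-proj} is written with the same convention), and your subsequent argument is unaffected.
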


\begin{proof}
  The set $\AA\cap T$ can be assumed to be non-empty, otherwise we can take $g=0$.
  We first reason under the assumption that $\mb{P}_{T}\in \AA$.
  Let
  \begin{equation*}
    \mc{J} := \bigcup_{\mb{P} \in \AA} \ch(I_{\mb{P}}),
  \end{equation*}
  where we recall that $\ch(J)$ denotes the set of triadic children of the interval $J$. By convexity of $\AA$, the set $\mc{J}$ satisfies
  \begin{equation}\label{eq:J-convexity}
    J\in \mc{J},\, J\subset J'\subsetneq I_{T} \implies J'\in\mc{J}.
  \end{equation}
  Let $\bar{\mc{J}}$ be the partition of $I_{T}$ generated by $\mc{J}$, i.e. the elements of $\bar{\mc{J}}$ are the maximal triadic subintervals of $I_{T}$, ordered by inclusion, that do not contain any interval of $\mc{J}$ as a proper subset.
  The set $\bar{\mc{J}}$ can also be characterised as the set of minimal elements of $\mc{J}$ with respect to inclusion.
  It follows that for any $J\in\bar{\mc{J}}$ there exists a unique $\mb{P}(J)$ such that $J \in \ch(I_{\mb{P}(J)}) $.
  Furthermore, for any $\mb{P}\in\AA$, the elements of $\bar{\mc{J}}$ cannot contain $I_{\mb{P}}$, and thus $\{J\in\bar{\mc{{J}}}\colon J\subset I_{\mb{P}}\}$ partitions $I_{\mb{P}}$. 
  
 For every $J\in\bar{\mc{J}}$ let $Q_{J}$ be the unique tile such that $\xi_{T}\in\omega_{Q_{J}}$ and $I_{Q_{J}}=J$, and set 
  \begin{equation*}
    g:=\sum_{J\in\bar{\mc{J}}} \langle f; w_{Q_{J}}\rangle w_{Q_{J}} |J|. 
  \end{equation*}
  Let us show that \eqref{eq:g-coeff} holds.
  Given any $\mb{P} \in \AA$ the intervals  $\{J\in\bar{\mc{{J}}}\colon J\subset I_{\mb{P}}\}$ partition $I_{\mb{P}}$, and since any such $J$ does not properly contain any of the triadic children of $I_{\mb{P}}$, it holds that  $|J|\leq|I_{\mb{P}}|/3$ and thus $|\omega_{\mb{P}}|=3/|I_{\mb{P}}|\le |\omega_{Q_{J}}|$.
  Since $\xi_{T}\in \omega_{P_{J}}\cap \omega_{\mb{P}}$ this implies that
  \begin{equation*}
   \mb{P}\subset\bigcup_{J\in\bar{\mc{J}},\, J\subset I_{P}} Q_{J}.
  \end{equation*}
  By Lemma \ref{lem:wave-packet-interaction}, for any $u\in\{0,1,2\}$ it holds that
  \begin{equation*}
      w_{\mb{P}_{u}} = \sum_{J\in\bar{\mc{J}},\; J\subset I_{\mb{P}}} \langle w_{\mb{P}_{u}}; w_{Q_{J}}\rangle w_{Q_{J}}  |J|.
  \end{equation*}
  It follows that
    \begin{align*}
      \langle f ; w_{\mb{P}_{u}}\rangle
      &= \sum_{J\in\bar{\mc{J}},\; J\subset I_{\mb{P}}} \langle f ; w_{Q_{J}} (x)\rangle \langle w_{Q_{J}}; w_{\mb{P}_{u}}\rangle   |J| \\
      &= \langle g ; w_{\mb{P}_{u}}\rangle -  \sum_{J\in\bar{\mc{J}},\; J\not \subset I_{\mb{P}}} \langle f ; w_{Q_{J}} (x)\rangle \langle w_{Q_{J}}; w_{\mb{P}_{u}}\rangle   |J|
      \\
      &= \langle g ; w_{\mb{P}_{u}}\rangle ,
  \end{align*}
where the last equality holds by maximality of $\bar{\mc{J}}$: if $J\in\bar{\mc{J}}$ and $J\not \subset I_{\mb{P}}$ then $J\cap I_{\mb{P}}=\emptyset$.
  
Now we prove the bound \eqref{eq:g-bound}.
The wave packets $w_{Q_{J}}$ for $J\in\bar{\mc{J}}$ have disjoint time support so it suffices to show that
\begin{equation*}
  \|\langle f ; w_{Q_{J}}\rangle\|_{X} \lesssim \sup_{\substack{\mb{P}\in\AA \\ u \in \{0,1,2\}}} \bigl\|\Emb_{u}[f](\mb{P})\bigr\|_{X}
\end{equation*}
for all such $J$.
Notice that $Q_{J}\subset\bigcup_{u=0}^{2}\mb{P}(J)_{u}$  with $\mb{P}(J)$ as above, so using Lemma \ref{lem:wave-packet-interaction} we obtain that
\begin{align*}
  &\|\langle f ; w_{Q_{J}}\rangle\|_{X} \leq \sum_{u\in\{0,1,2\}} \|\langle f; w_{\mb{P}(J)_{u}}\rangle\|_{X} \, \bigl|\langle w_{Q_{J}}; w_{\mb{P}(J)_{u}}\rangle\bigr|\, |I_{\mb{P}(J)_{u}}|
  \\
  & \leq\sum_{u\in\{0,1,2\}} \|\langle f; w_{\mb{P}(J)_{u}}\rangle\|_{X} \lesssim \sup_{\substack{\mb{P}\in\AA \\ u \in \{0,1,2\}}}  \bigl\|\Emb_{u}[f](\mb{P})\bigr\|_{X}
\end{align*}
as required.

Finally, suppose that $\mb{P}_{T}\notin \AA$. Let $(\mb{O}_i)_i$ be the maximal elements of $T\cap\AA$ with respect to the order $\leq$.
The intervals $I_{\mb{O}_{i}}$ are pairwise disjoint, and $T\cap \AA$ can be written as a union of disjoint sets $\cup_{i}T(\mb{O}_{i})\cap\AA$.
Applying the above reasoning to each $T(\mb{O}_{i})$ we obtain a set of disjointly supported functions $g_{i}$ satisfying
\begin{align*}
  &\Emb[g_{i}](\mb{P})=\Emb[f](\mb{P}) \qquad \forall \mb{P}\in T(\mb{O}_{i})\cap \AA
  \\
  &
    \|g_{i}\|_{L^{\infty}(I_{T};X)} \lesssim \| \1_{\AA} \| \Emb[f] \|_{X^{3}} \|_{S^\infty(T(\mb{O}_{i}))}.
\end{align*}
Setting $g=\sum_{i}g_{i}$ completes the proof. 
\end{proof}

Recall that the randomised size $\RS$ is the sum of three types of terms,
\begin{equation*}
  \| \1_\AA \Emb[f] \|_{\RS(T)} = \big\| \1_\AA \|\Emb[f]\|_{X^3} \big\|_{S^\infty(T)}
  + \big\| \|\DEF (\1_\AA \Emb[f])\|_{X^3} \big\|_{S^{(1,\infty)}(T)}
  + \sum_{u \in \{0,1,2\}} \|\Emb[f]\|_{\RS_u(T)}.
\end{equation*}
The first summand need not be estimated; we handle the remaining summands separately.

\begin{prop}[Defect size domination]\label{prop:defect-size-domination}
  Let $X$ be a Banach space and $\AA\subset \tPP $ a convex set.
  Then for all trees $T$ and all $f \in \Sch(\WW;X)$,
  \begin{equation}\label{eq:deterministic-defect}
    \big\|  \| \DEF(\1_{\AA} \Emb[f]) \|_{X^{3}} \big\|_{S^{(\infty,1)}(T)}\lesssim \big\|  \1_{\AA} \| \Emb[f] \|_{X^{3}} \big\|_{S^{\infty}(T)}.
  \end{equation}
\end{prop}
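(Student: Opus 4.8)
The plan is to prove \eqref{eq:deterministic-defect} one tree at a time: fix a tree $T$ and write $M := \bigl\| \1_{\AA}\, \| \Emb[f] \|_{X^3} \bigr\|_{S^\infty(T)}$. Since $\DEF(\1_{\AA}\Emb[f])$ takes values in $X^3$ and the left-hand side of \eqref{eq:deterministic-defect}, written via the second expression for $\|\cdot\|_{S^{(\infty,1)}(T)}$, is (up to the harmless threefold appearance of $\mb{P}_T$) the supremum over $x \in I_T$ of $\sum_{\mb{P} \in T,\ I_{\mb{P}} \ni x} \| \DEF(\1_{\AA}\Emb[f])(\mb{P}) \|_{X^3}$, it suffices to bound this sum by $\lesssim M$ uniformly in $x$. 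The first step is to identify which tritiles contribute. Viewing $\Emb[f]$ as an $X$-valued function on tiles and using that $\DEF\Emb[f] = 0$ (Lemma \ref{lem:wave-packet-interaction} together with \eqref{eq:defect-norm-bound}), one has for each tile $P = \mb{P}_u$
\begin{equation*}
  \DEF(\1_{\AA}\Emb[f])(\mb{P}_u) = \sum_{Q \in \vs{\mb{P}}} \bigl( \1_{\AA}(\mb{P}) - \1_{\AA}(\mb{Q}) \bigr)\, \langle f; w_Q \rangle\, \langle w_Q; w_{\mb{P}_u} \rangle\, |I_Q|,
\end{equation*}
where $\mb{Q}$ denotes the unique tritile containing the vertical-split tile $Q$; from the definitions one checks $\mb{Q} \leq \mb{P}$ with $|I_{\mb{Q}}| = |I_{\mb{P}}|/3$, so the defect vanishes unless $\mb{P}$ is a \emph{boundary} tritile in the sense of the remark following Proposition \ref{prop:func-reconstruction}.

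The second step is a pointwise bound on the defect at a boundary tritile $\mb{P} \in T$. In each surviving term, exactly one of $\mb{P}$ and $\mb{Q}$ lies in $\AA$; since $\mb{Q} \leq \mb{P} \leq \mb{P}_T$, that one in fact lies in $T \cap \AA$. If $\mb{P} \in T \cap \AA$, then $Q \subset \mb{P} = \mb{P}_0 \sqcup \mb{P}_1 \sqcup \mb{P}_2$, so Lemma \ref{lem:wave-packet-interaction} writes $w_Q$ as a combination of the $w_{\mb{P}_u}$ with coefficients that, after multiplication by $|I_{\mb{P}}|$, are $\lesssim 1$ by Cauchy--Schwarz; hence $\| \langle f; w_Q \rangle \|_X \lesssim \| \Emb[f](\mb{P}) \|_{X^3} \leq M$. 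If instead $\mb{Q} \in T \cap \AA$, then $Q$ is itself one of the subtiles $\mb{Q}_w$, so $\langle f; w_Q \rangle = \Emb[f](\mb{Q})_w$ and again $\| \langle f; w_Q \rangle \|_X \leq \| \Emb[f](\mb{Q}) \|_{X^3} \leq M$. Together with $| \langle w_Q; w_{\mb{P}_u} \rangle |\, |I_Q| \lesssim 1$ and the fact that $\vs{\mb{P}}$ has three elements, this yields $\| \DEF(\1_{\AA}\Emb[f])(\mb{P}) \|_{X^3} \lesssim M$ for every boundary tritile $\mb{P} \in T$, and $0$ for every non-boundary one. (Alternatively, the bound $\| \langle f; w_Q \rangle \|_X \lesssim M$ can be obtained by first reducing to finite $\AA$ by exhaustion and then using the function $g$ of Lemma \ref{lem:convex-proj}, for which $\Emb[g] = \Emb[f]$ on $T \cap \AA$ and $\|g\|_{L^\infty(I_T;X)} \lesssim M$, combined with $\|w_Q\|_{L^1(\WW)} = 1$.)

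The remaining, and main, point is a counting estimate: by the second step,
\begin{equation*}
  \sum_{\substack{\mb{P} \in T \\ I_{\mb{P}} \ni x}} \| \DEF(\1_{\AA}\Emb[f])(\mb{P}) \|_{X^3} \lesssim M \cdot \#\bigl\{ \mb{P} \in T : I_{\mb{P}} \ni x \text{ and } \mb{P} \text{ is a boundary tritile}\bigr\},
\end{equation*}
so it suffices to bound this cardinality by an absolute constant. This is precisely where the convexity of $\AA$ is essential: as stated in the remark after Proposition \ref{prop:func-reconstruction}, for a fixed $x$ there are at most two boundary tritiles whose time interval contains $x$. I expect this counting to be the delicate part of the proof; the natural route is to note that the tritiles $\mb{P} \in T$ with $x \in I_{\mb{P}}$ form a chain under $\leq$ (their time intervals are nested, which then forces their frequency intervals), and to use the convexity of $\AA$, together with the description of boundary tritiles from the first step, to limit how many sign changes of $\1_{\AA}$ can occur along this chain near such $x$. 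Granting this, the displayed sum is $\lesssim M$ uniformly in $x$, $T$, $\AA$ and $f$, which is \eqref{eq:deterministic-defect}; the final sentence of the proposition (unconditionality of $\RS$ is unaffected) is then immediate from the already-established unconditionality of the remaining summands of $\RS$.
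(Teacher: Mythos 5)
Your reduction to a fixed tree and the pointwise bound at a boundary tritile are correct, and in fact more careful than the paper's own treatment: the paper passes from \eqref{eqn:defect-norm-est} directly to a bound featuring $\|\1_{\AA}\Emb[f](\mb{P})\|_{X^3}$ and $\|\1_{\AA}\Emb[f](\mb{Q})\|_{X^3}$, eliding precisely the case you handle via Lemma \ref{lem:wave-packet-interaction} (when $\mb{P}\in\AA$ but the relevant order-child $\mb{Q}\notin\AA$, so the coefficient at the vertical tile must be re-expanded through the tiles of $\mb{P}$). Up to that point you are following the paper's argument, with a useful extra detail supplied.

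The genuine gap is the counting step, which you defer (``granting this''), and the route you sketch would not deliver it. Counting sign changes of $\1_{\AA}$ along the chain $\{\mb{P}\in T : x\in I_{\mb{P}}\}$ only controls the pairs $(\mb{P},\mb{Q})$ in which $\mb{Q}$ is the next chain element, i.e.\ $x\in I_{\mb{Q}}$; convexity does give at most two such transitions. But the boundary condition at $\mb{P}$ involves all three tritiles $\mb{Q}\leq\mb{P}$ with $|I_{\mb{Q}}|=|I_{\mb{P}}|/3$, and convexity as defined places no restriction on the two whose time intervals do not contain $x$. Concretely, take $\AA$ to be the chain itself, $\AA=\{\mb{P}^{(k)}\}_{k=0}^{M}$ with $\mb{P}^{(k)}\in T$, $x\in I_{\mb{P}^{(k)}}$ and $|I_{\mb{P}^{(k)}}|=3^{-k}|I_T|$: this set is convex, yet every $\mb{P}^{(k)}$ is a boundary tritile, since its two off-chain order-children at the next scale lie outside $\AA$; so the count along the chain is $M+1$, not at most two. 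Worse, taking $f$ to be the sum of the indicators of the off-chain children's time intervals, one computes $\DEF(\1_{\AA}\Emb[f])(\mb{P}^{(k)}_0)=\tfrac13$ for all $k<M$ while $\sup_{\mb{P}\in T\cap\AA}\|\Emb[f](\mb{P})\|_{X^3}\leq\tfrac12$, so the $S^{(\infty,1)}$/$S^\infty$ comparison in \eqref{eq:deterministic-defect} cannot be obtained from convexity alone in the generality stated. This is exactly the step the paper compresses into the remark after Proposition \ref{prop:func-reconstruction} and into its one-line appeal to convexity, so you have correctly located the crux; but your proposal does not close it, and closing it requires either additional structure on $\AA$ (e.g.\ the sets built from trees, strips and their complements that actually arise in the applications) or an argument genuinely different from chain-monotonicity of $\1_{\AA}$.
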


\begin{proof}
  Using the estimate \eqref{eqn:defect-norm-est} and the fact that $\DEF \Emb[f] = 0$, for all tritiles $\mb{P}$ we have

 
  \begin{equation*}
    \| \DEF(\1_{A}\Emb[f])(\mb{P})\|_{X^{3}}\lesssim
    \sum_{\substack{\mb{Q}\leq \mb{P}\\ |I_{\mb{Q}}|=|I_{\mb{Q}}|/3}}\hspace{-1.5em}\Bigl( \|
    \1_{\AA}\Emb[f](\mb{P}) \|_{X^{3}}+ \|\1_{\AA}\Emb[f](\mb{Q})\|_{X^{3}}   \Bigr)\Bigl| \1_{\AA}(\mb{P}) - \1_{\AA}(\mb{Q})\Bigr|.
  \end{equation*}
  Since $\AA$ is convex, for each $x\in I_{T}$ there are at most two tritiles $\mb{P}$ such that $x\in I_{\mb{P}}$ and
  \begin{equation*}
\sum_{\substack{\mb{Q}\leq \mb{P}\\ |I_{\mb{Q}}|=|I_{\mb{Q}}|/3}}\hspace{-1.5em}\Bigl| \1_{\AA}(\mb{P}) - \1_{\AA}(\mb{Q})\Bigr|\neq0
\end{equation*}
It follows that 
\begin{align*}
    \big\|   \| \DEF(\1_{A}\Emb[f])(\mb{P})\|_{X^{3}} \big\|_{S^{(\infty,1)}}
    &\lesssim \big\|\| \1_{\AA} \Emb[f] \|_{X^{3}} \Bigr\|_{S^{\infty}(T)}
    \Bigl\|      \mb{P} \mapsto\hspace{-1.5em}\sum_{\substack{\mb{Q}\leq \mb{P}\\ |I_{\mb{Q}}|=|I_{\mb{Q}}|/3}}\hspace{-1.5em}\Bigl| \1_{\AA}(\mb{P}) - \1_{\AA}(\mb{Q})\Bigr|\Bigr\|_{S^{(\infty,1)}(T)} \\
    &\lesssim \big\|\| \1_{\AA} \Emb[f] \|_{X^{3}} \big\|_{S^{\infty}(T)}
\end{align*}
as required.
\end{proof}

\begin{prop}[Lacunary size domination]\label{prop:lacunary-size-domination}
  Let $X$ be a UMD Banach space and $\AA \subset \tPP$ a convex set.
  Then for all trees $T$, all $f \in \Sch(\WW;X)$, and all $u \in \{0,1,2\}$,
  \begin{equation}\label{eqn:lac-size-dom-est}
    \|\1_\AA \Emb[f]\|_{\RS_u(T)} \lesssim \big\| \1_\AA \|\Emb[f]\|_{X^3} \big\|_{S^\infty(T)}.
  \end{equation}
\end{prop}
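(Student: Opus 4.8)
The plan is to reduce, via Lemma \ref{lem:convex-proj}, to an honest embedded function to which Proposition \ref{prop:lacunary-tree-proj} applies, and then to carry out the remaining reshuffling of the Rademacher sums with Kahane's contraction principle and the Kahane--Khintchine inequality. First I would observe that since $f \in \Sch(\WW;X)$, the embedded function $\Emb[f]$ is supported on a finite set of tritiles, which is contained in a finite \emph{convex} box of tritiles (one can take $\{\mb{P} : I_{\mb{P}} \subseteq B_{3^N}(0),\ 3^{-N} \leq |I_{\mb{P}}| \leq 3^N,\ \omega_{\mb{P}} \subseteq B_{3^{N+1}}(0)\}$ for $N$ large, which is readily checked to be convex). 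Intersecting $\AA$ with this box changes neither side of \eqref{eqn:lac-size-dom-est}, so we may assume $\AA$ is finite. Fixing $u$, it then suffices to bound, for each of the two $v \neq u$, the quantity
\[
  \mc{Q}_v := \left(\frac{1}{|I_{T}|} \int_{I_T} \E \left\| \sum_{\mb{P} \in T^{u}} \varepsilon_{\mb{P}}\,\1_\AA(\mb{P})\, \Emb[f](\mb{P}_{v})\, \1_{I_{\mb{P}}}(x) \right\|_{X}^2 \, \dd x \right)^{1/2}
\]
by $\bigl\|\1_\AA \|\Emb[f]\|_{X^3}\bigr\|_{S^\infty(T)}$.

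Apply Lemma \ref{lem:convex-proj} to $f$, $T$, and $\AA$ to obtain $g \in \Sch(\WW;X)$ supported on $I_T$ with $\Emb[g](\mb{P}) = \Emb[f](\mb{P})$ for all $\mb{P} \in T \cap \AA$ and $\|g\|_{L^\infty(I_T;X)} \lesssim \bigl\|\1_\AA \|\Emb[f]\|_{X^3}\bigr\|_{S^\infty(T)}$. For every $\mb{P} \in T^u$ the factor $\1_\AA(\mb{P})$ vanishes unless $\mb{P} \in T \cap \AA$, so $\1_\AA(\mb{P})\Emb[f](\mb{P}_v) = \1_\AA(\mb{P})\Emb[g](\mb{P}_v)$; since $\1_\AA(\mb{P}) \in \{0,1\}$, Kahane's contraction principle (Theorem \ref{thm:contraction-principle}), applied to the Rademacher sum for each fixed $x$, removes the cut-off and gives $\mc{Q}_v \lesssim \bigl(\tfrac1{|I_T|}\int_{I_T}\E\|\sum_{\mb{P}\in T^u}\varepsilon_{\mb{P}}\Emb[g](\mb{P}_v)\1_{I_{\mb{P}}}(x)\|_X^2\,\dd x\bigr)^{1/2}$. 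Next I would convert characteristic functions into wave packets: since $w_{\mb{P}_v}(x)|I_{\mb{P}}| = \exp_{\xi_{\mb{P}_v}}(x)\1_{I_{\mb{P}}}(x)$ with $|\exp_{\xi_{\mb{P}_v}}(x)| = 1$, applying the contraction principle pointwise in $x$ in both directions (with the unimodular scalars $\exp_{\xi_{\mb{P}_v}}(x)$ and their conjugates) and then Kahane--Khintchine (Theorem \ref{thm:kahane-khintchine}) to compare second moments yields
\[
  \frac1{|I_T|}\int_{I_T}\E\Bigl\|\sum_{\mb{P}\in T^u}\varepsilon_{\mb{P}}\Emb[g](\mb{P}_v)\1_{I_{\mb{P}}}(x)\Bigr\|_X^2\,\dd x \simeq \frac1{|I_T|}\,\E\Bigl\|\sum_{\mb{P}\in T^u}\varepsilon_{\mb{P}}\langle g;w_{\mb{P}_v}\rangle w_{\mb{P}_v}|I_{\mb{P}}|\Bigr\|_{L^2(I_T;X)}^2 .
\]

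The right-hand side is, after relabelling the Rademacher variables via the injection $\mb{P}\mapsto\mb{P}_v$ on $T^u$, one $(u,v)$-summand of the Rademacher sum in Proposition \ref{prop:lacunary-tree-proj}; discarding the other summands by the contraction principle—the only bookkeeping point being the harmless multiplicity $2$ carried by the top tritile $\mb{P}_T$, which lies in every $T^{u'}$—and invoking Proposition \ref{prop:lacunary-tree-proj} with $p=2$ together with Kahane--Khintchine once more, this is $\lesssim \tfrac1{|I_T|}\|g\|_{L^2(I_T;X)}^2 = \|g\|_{\sL^2(I_T;X)}^2 \leq \|g\|_{L^\infty(I_T;X)}^2$. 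Taking square roots, summing over the two values $v \neq u$, and using the bound on $\|g\|_{L^\infty(I_T;X)}$ from Lemma \ref{lem:convex-proj} finishes the proof.

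The genuine content lies entirely in Lemma \ref{lem:convex-proj} (which replaces the cut-off embedding $\1_\AA\Emb[f]$ by a bounded function whose embedding agrees on $T\cap\AA$) and in Proposition \ref{prop:lacunary-tree-proj} (the UMD tree-projection estimate); both are already established, so the only real work here is the routine manipulation of Rademacher sums. I expect the one mildly delicate point to be keeping the passage between the tritile-indexed Rademacher variables of the size $\RS_u$, the tile-indexed variables of Proposition \ref{prop:lacunary-tree-proj}, and the pointwise-in-$x$ unimodular corrections mutually consistent—but none of this is deep.
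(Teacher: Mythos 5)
Your argument after the finiteness reduction coincides with the paper's proof (Lemma \ref{lem:convex-proj} plus Proposition \ref{prop:lacunary-tree-proj}, glued together with the contraction principle and Kahane--Khintchine), but the reduction itself rests on a false claim: for a Schwartz function, $\Emb[f]$ is \emph{not} supported on finitely many tritiles. Take $f=\1_{B_1(0)}$: for every $k\geq 1$ the tile $P$ with $I_P=B_{3^{-k}}(0)$ and $\xi_P=0$ has $w_P=3^{k}\1_{I_P}$ and $\langle f;w_P\rangle=1$, and the tile with $I_P=B_{3^{k}}(0)$, $\xi_P=0$ has $\langle f;w_P\rangle=3^{-k}\neq 0$. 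Such zero-frequency coefficients survive at arbitrarily small and arbitrarily large scales, so they escape any box determined by $f$ alone; and they are not irrelevant to \eqref{eqn:lac-size-dom-est}, because for suitable trees they are \emph{lacunary} coefficients. For instance, if $|\xi_T|=3^{M}$ with $M>N+1$, then for a tritile $\mb{P}\in T$ with $|I_{\mb{P}}|=3^{-M}$ one has $\omega_{\mb{P}}=B_{3^{M+1}}(0)$, which contains $\xi_T$ and $0$ in \emph{different} children; the zero-frequency child is then a lacunary tile $\mb{P}_v$ with $\langle f; w_{\mb{P}_v}\rangle=\langle f\rangle_{I_{\mb{P}}}\neq 0$, while $\mb{P}$ violates both the scale constraint $|I_{\mb{P}}|\geq 3^{-N}$ and the constraint $\omega_{\mb{P}}\subseteq B_{3^{N+1}}(0)$ of your box. (A tree with $\xi_T=0$ and $|I_T|>3^{N}$ similarly has nonvanishing lacunary coefficients at scales above $3^{N}$.) Hence ``intersecting $\AA$ with this box changes neither side'' is wrong: the left-hand side can strictly decrease, and your argument only proves the estimate for the truncated set.

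The gap is repairable, and the repair is exactly the paper's device: intersect $\AA$ with the scale truncation $\tPP_N=\{\mb{P}\in\tPP : 3^{-N}<|I_{\mb{P}}|<3^{N}\}$, which is convex, note that $\AA\cap T\cap\tPP_N$ is finite (inside a tree the time intervals lie in $I_T$ and the scales are bounded above by $|I_T|$), prove the bound with a constant independent of $N$, and let $N\to\infty$; the finitely many lacunary tritiles that actually contribute are captured in the limit. With that replacement, the remaining steps of your proposal---applying Lemma \ref{lem:convex-proj} to the finite convex set, dropping the cut-off $\1_{\AA}$ and passing between $\1_{I_{\mb{P}}}$ and $w_{\mb{P}_v}|I_{\mb{P}}|$ via Theorem \ref{thm:contraction-principle}, comparing moments via Theorem \ref{thm:kahane-khintchine}, extracting the single $(u,v)$ summand (with the harmless multiplicity $2$ at $\mb{P}_T$), and invoking Proposition \ref{prop:lacunary-tree-proj} with $p=2$---are correct and are precisely what the paper does, mostly implicitly.
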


\begin{proof}
  Let $\tPP_{N}=\{P \in \tPP : 3^{-N}<|I_{P}|<3^{N}\}$.
  We show that
  \begin{equation*}
     \| \1_{\AA\cap\tPP_{N}} \mc{E}[f] \|_{\RS_u(T)} \lesssim \sup_{\substack{\mb{P} \in \AA\cap T\cap \tPP_{N} \\ v \in \{0,1,2\}}} \| \langle f; w_{\mb{P}_v} \rangle\|_{X}
  \end{equation*}
  for fixed $N$; the theorem follows by passing to the limit $N \to \infty$.
  Since $\AA\cap T \cap\tPP_{N} $ is finite and convex, by Lemma \ref{lem:convex-proj} there exists a function $g \in \Sch(\WW;X)$ supported on $I_T$ such  that
  \begin{align*}
      &\|g\|_{L^{\infty}(I_{T};X)} \lesssim \sup_{\substack{\mb{P} \in  T \cap\AA \cap \tPP_{N} \\ v \in \{0,1,2\}}}
      \|\langle f ; w_{\mb{P}_v}\rangle\|_{X}\\
      & \langle g; w_{\mb{P}_{v}}\rangle = \langle f; w_{\mb{P}_{v}}\rangle \qquad \forall \mb{P} \in  T \cap \AA \cap \tPP_{N}, \;v\in\{0,1,2\}.
  \end{align*}
  Now fix $v \in \{0,1,2\} \sm \{u\}$.
  Since $X$ is UMD we have by Proposition \ref{prop:lacunary-tree-proj}
  \begin{align*}
   |I_T|^{-1/2} \E \biggl\|&\sum_{\mb{P} \in   T^u \cap \AA \cap \tPP_{N} } \varepsilon_{\mb{P}} \langle f; w_{\mb{P}_v} \rangle w_{\mb{P}_v}\,|I_{\mb{P}}| \biggr\|_{L^2(I_T;X)} \\
                          &\lesssim|I_T|^{-1/2} \E \biggl\|\sum_{\mb{P} \in \tPP_N \cap T^u} \varepsilon_{\mb{P}} \langle g; w_{\mb{P}_v} \rangle w_{\mb{P}_v}\,|I_{\mb{P}}| \biggr\|_{L^2(I_T;X)} \\
    &\lesssim |I_T|^{-1/2} \|g\|_{L^2(I_T;X)} \leq \|g\|_{L^\infty(I_T;X)},
  \end{align*}
  Summing this over $v \neq u$ and using the $L^\infty$-bound on $g$ yields \eqref{eqn:lac-size-dom-est}.
  
\end{proof}

Theorem \ref{thm:size-domination} follows immediately from Propositions \ref{prop:defect-size-domination} and \ref{prop:lacunary-size-domination}.


\section{Proofs of the embedding bounds}
\label{sec:energy-embedding}
In this section we prove Theorem \ref{thm:intro-embeddings}: modulation invariant Carleson embedding bounds into iterated and non-iterated outer-$L^p$ spaces.
Before getting to the proofs themselves, we isolate a tile selection algorithm that appears multiple times in the proofs.
Thanks to the size domination theorem (Theorem \ref{thm:size-domination}), we only need this simple tile selection procedure, rather than a more complicated tree selection procedure (as used for example in \cite{HLP13}).

\begin{prop}[Tile selection]\label{prop:tile-selection}
  Let $F\in\Bor(\tPP;\C)$. For any $\lambda>0$ there exists a (possibly empty) set $\BB_{\lambda}$ of pairwise disjoint tritiles such that, if we set $E_{\lambda} := \bigcup_{\mb{B} \in \BB_{\lambda}} T(\mb{B})$,
  \begin{itemize}
  \item
    for each $\mb{B} \in \BB_{\lambda}$, $\bigl|F(\mb{B})\bigr| > \lambda$,
  \item
    for all $\mb{P} \in \tPP \setminus E_{\lambda}$, $\bigl|F(\mb{P})\bigr| \leq \lambda$.
  \end{itemize}
\end{prop}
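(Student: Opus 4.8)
The plan is to let $\BB_\lambda$ be the collection of those tritiles on which $|F|$ exceeds $\lambda$ and which are maximal with this property in the order $\leq$ of Definition~\ref{def:tree}. Once this is done, the first bullet is immediate, and both the disjointness of the selected tritiles and the second bullet rest on a single geometric fact about tritiles, which I would isolate first.

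That fact is: two tritiles $\mb{P}$ and $\mb{P}'$ meet as subsets of $\WW \times \WW$ if and only if they are $\leq$-comparable. Indeed, if $\mb{P} \leq \mb{P}'$ then $I_\mb{P} \subseteq I_{\mb{P}'}$ and $\omega_{\mb{P}'} \subseteq \omega_\mb{P}$, so $I_\mb{P} \times \omega_{\mb{P}'} \subseteq \mb{P} \cap \mb{P}'$. Conversely, if $\mb{P} \cap \mb{P}' \neq \emptyset$ then $I_\mb{P} \cap I_{\mb{P}'} \neq \emptyset$ and $\omega_\mb{P} \cap \omega_{\mb{P}'} \neq \emptyset$; since triadic intervals are nested or disjoint, one of $I_\mb{P}, I_{\mb{P}'}$ contains the other, say $I_\mb{P} \subseteq I_{\mb{P}'}$. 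As tritiles have area $3$, this forces $|\omega_\mb{P}| = 3|I_\mb{P}|^{-1} \geq 3|I_{\mb{P}'}|^{-1} = |\omega_{\mb{P}'}|$, and since $\omega_\mb{P}$ and $\omega_{\mb{P}'}$ are intersecting triadic intervals the one of smaller or equal size lies in the other, i.e. $\omega_{\mb{P}'} \subseteq \omega_\mb{P}$; hence $\mb{P} \leq \mb{P}'$. In particular, $\leq$-incomparable tritiles are disjoint.

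Now put $\mc{S}_\lambda := \{\mb{P} \in \tPP : |F(\mb{P})| > \lambda\}$; if $\mc{S}_\lambda = \emptyset$ take $\BB_\lambda := \emptyset$, and otherwise let $\BB_\lambda$ be the set of $\leq$-maximal elements of $\mc{S}_\lambda$. Then $\BB_\lambda \subseteq \mc{S}_\lambda$ gives the first bullet; distinct elements of $\BB_\lambda$ are $\leq$-incomparable and hence disjoint by the observation above; and for the second bullet it suffices to check $\mc{S}_\lambda \subseteq E_\lambda$. For this, given $\mb{P} \in \mc{S}_\lambda$ one picks a $\leq$-maximal $\mb{B} \in \mc{S}_\lambda$ with $\mb{B} \geq \mb{P}$ (take $\mb{B} = \mb{P}$ if $\mb{P}$ is itself maximal), whence $\mb{P} \in T(\mb{B}) \subseteq E_\lambda$.

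The only delicate point in the last step is the existence of a maximal element of $\mc{S}_\lambda$ above a given $\mb{P}$, which holds provided $\mc{S}_\lambda$ contains no strictly $\leq$-increasing infinite chain; this is automatic in all the applications of the proposition, where $F = \1_\AA\,\Emb[f]$ for some $f \in \Sch(\WW;X)$ and $\mc{S}_\lambda$ is in fact finite. Accordingly, the main obstacle is really just the geometric lemma that, for tritiles, being $\leq$-incomparable is the same as being disjoint; granted that, the selection and its verification are routine.
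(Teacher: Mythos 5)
Your proof is correct and is essentially the paper's own argument: both select the $\leq$-maximal elements of the superlevel set $\{\mb{P} : |F(\mb{P})|>\lambda\}$, deduce pairwise disjointness from the fact that intersecting tritiles are $\leq$-comparable, and get the second bullet from the inclusion of the superlevel set in $E_\lambda$. Your explicit verification of the comparability lemma and your remark on the need to exclude infinite strictly increasing chains (which the paper leaves implicit, and which indeed holds in all the applications) only make the same argument more complete.
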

  
\begin{proof}
  Let $\MM_{\lambda} := \{\mb{P} \in \tPP : \bigl|F(\mb{P})\bigr| > \lambda\}$.  If $\MM_{\lambda}=\emptyset$ then just set $\BB_{\lambda}=\emptyset$. Otherwise let $\BB_{\lambda}\subset \MM_\lambda$ be the subset of tritiles in $\MM_\lambda$ that are maximal with respect to $\leq$.
  Then $\BB_\lambda$ satisfies the first required condition, and to see the second one simply notes that $\MM_{\lambda} \subset E_{\lambda}$.
  To see that $\BB_{\lambda}$ consists of pairwise disjoint tritiles, suppose that $\mb{P},\mb{Q} \in \BB_{\lambda}$ with $\mb{P} \cap \mb{Q} \neq \emptyset$.
  Then either $\mb{P}\leq \mb{Q}$ or $\mb{Q}\le \mb{P}$, and by maximality of $\mb{P}$ and $\mb{Q}$ in $\MM_\lambda$ we must have that $\mb{P} = \mb{Q}$.
\end{proof}

We are ready to prove our modulation invariant Carleson embedding bounds.
We prove these with respect to the deterministic size $S^\infty$, under an $r$-Hilbertian assumption; we will obtain Theorem \ref{thm:intro-embeddings} as a corollary of the size domination theorem. First we consider embeddings into non-iterated outer-$L^p$ spaces. These are easier to prove, but they only hold for $p > r$.

\begin{thm}\label{thm:non-iterated}
  Let $X$ be a Banach space which is $r$-Hilbertian for some $r \in [2,\infty)$.\footnote{Alternatively, one can suppose that $X$ satisfies the `Walsh tile-type' bounds \eqref{eqn:tile-orthogonality}.}
  Then the bounds
    \begin{align*}
          \big\| \|\mc{E}[f]\|_{X^3}  \big\|_{L_\mu^p S^\infty} &\lesssim \|f \|_{L^p(\WW;X)}\qquad \forall p\in(r,\infty],
      \\
        \big\| \|\mc{E}[f]\|_{X^3}  \big\|_{L_\mu^{r,\infty} S^\infty} &\lesssim \|f \|_{L^r(\WW;X)}
  \end{align*}
  hold for all $f \in \Sch(\WW;X)$.
\end{thm}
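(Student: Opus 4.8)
The plan is to prove the strong-type bound for $p=\infty$ directly, prove the weak-type bound at $p=r$, and then obtain the strong-type bounds for $p\in(r,\infty)$ by Marcinkiewicz interpolation for outer-$L^p$ spaces (Proposition \ref{prop:outer-interpolation}), applied to the quasi-sublinear map $T\colon f\mapsto \|\mc{E}[f]\|_{X^3}$, which is indeed quasi-sublinear because $\mc{E}$ is linear and $\|\cdot\|_{X^3}$ satisfies the triangle inequality. The $p=\infty$ endpoint is immediate: by definition of $S^\infty$ and of the wave packets, for any tile $P$ one has $\|\langle f;w_P\rangle\|_X\le \|f\|_{L^\infty(I_P;X)}\|w_P\|_{L^1(\WW)}=\|f\|_{L^\infty(I_P;X)}\le\|f\|_{L^\infty(\WW;X)}$, so $\big\|\,\|\mc{E}[f]\|_{X^3}\big\|_{S^\infty}\le\|f\|_{L^\infty(\WW;X)}$, which is the outer-$L^\infty$ statement.

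The substance is the weak endpoint at $p=r$. Fix $\lambda>0$ and apply the tile selection algorithm (Proposition \ref{prop:tile-selection}) to the function $\mb{P}\mapsto \|\mc{E}[f](\mb{P})\|_{X^3}$, producing a set $\BB_\lambda$ of pairwise disjoint tritiles with $\|\mc{E}[f](\mb{B})\|_{X^3}>\lambda$ for each $\mb{B}\in\BB_\lambda$, and with $E_\lambda=\bigcup_{\mb{B}\in\BB_\lambda}T(\mb{B})$ such that $\|\mc{E}[f](\mb{P})\|_{X^3}\le\lambda$ off $E_\lambda$; this gives $\big\|\1_{\tPP\sm E_\lambda}\|\mc{E}[f]\|_{X^3}\big\|_{S^\infty}\le\lambda$. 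It then remains to estimate $\mu(E_\lambda)=\sum_{\mb{B}\in\BB_\lambda}|I_{\mb{B}}|$. For each selected tritile $\mb{B}$, the defining inequality says there is some $u\in\{0,1,2\}$ with $\|\langle f;w_{\mb{B}_u}\rangle\|_X>\lambda$. Since distinct tritiles in $\BB_\lambda$ are disjoint, their subtiles $\mb{B}_u$ (for the chosen indices) are pairwise disjoint tiles, so the Walsh tile-type inequality (Proposition \ref{prop:tile-orthogonality}) applies:
\begin{equation*}
  \lambda^r \sum_{\mb{B}\in\BB_\lambda}|I_{\mb{B}}| \le \sum_{\mb{B}\in\BB_\lambda}\|\langle f;w_{\mb{B}_u}\rangle\|_X^r\,|I_{\mb{B}_u}| \lesssim \|f\|_{L^r(\WW;X)}^r,
\end{equation*}
hence $\mu(E_\lambda)\lesssim \lambda^{-r}\|f\|_{L^r(\WW;X)}^r$. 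Taking the supremum over $\lambda$ yields $\big\|\,\|\mc{E}[f]\|_{X^3}\big\|_{L^{r,\infty}_\mu S^\infty}\lesssim\|f\|_{L^r(\WW;X)}$.

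One should be slightly careful about applying a countable selection: Schwartz functions have $\mc{E}[f]$ supported (essentially) on finitely many scales in a quantitative sense, but in any case one can run the argument on the truncated collections $\tPP_N=\{\mb{P}:3^{-N}<|I_{\mb{P}}|<3^N\}$, where $\BB_\lambda\cap\tPP_N$ is finite, obtain bounds uniform in $N$, and pass to the limit — exactly the standard limiting argument used elsewhere in the paper. With the two endpoints $L^{r,\infty}_\mu S^\infty$ and $L^\infty_\mu S^\infty$ in hand, Proposition \ref{prop:outer-interpolation} gives $\big\|\,\|\mc{E}[f]\|_{X^3}\big\|_{L^p_\mu S^\infty}\lesssim\|f\|_{L^p(\WW;X)}$ for all $p\in(r,\infty)$, completing the proof. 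The only real obstacle is the weak endpoint, and within it the key point is recognising that the maximal selected tritiles have pairwise disjoint distinguished subtiles so that the single $r$-orthogonality estimate of Proposition \ref{prop:tile-orthogonality} closes the argument; the deterministic size $S^\infty$ is what makes this clean, since there are no square-function or Rademacher-sum terms to control at this stage.
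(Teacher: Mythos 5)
Your proposal is correct and follows essentially the same route as the paper: the trivial $L^\infty$ endpoint, the weak $L^r$ endpoint via the tile selection of Proposition \ref{prop:tile-selection} combined with the disjointness of the distinguished subtiles and the $r$-orthogonality of Proposition \ref{prop:tile-orthogonality}, and then outer Marcinkiewicz interpolation (Proposition \ref{prop:outer-interpolation}). The only cosmetic difference is how finiteness is handled (your truncation to $\tPP_N$ versus the paper's application of the tile-type bound to all finite subsets of $\BB_\lambda$), which is immaterial.
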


\begin{proof}
  By interpolation (i.e. by Proposition \ref{prop:outer-interpolation}) it suffices to establish weak endpoint bounds for $p=\infty$ and $p=r$.
  The $p = \infty$ endpoint follows immediately from the definition of $S^\infty$:
  \begin{equation*}
    \big\|\|\mc{E}[f]\|_{X^3}  \big\|_{L_\mu^\infty S^\infty} =\sup_{\substack{\mb{P}\in\tPP \\ u\in\{0,1,2\}}} \| \langle f;w_{\mb{P}_{u}}\rangle \|_{X}\leq \| f \|_{L^{\infty}(\WW;X)}.
  \end{equation*}
  For the weak outer-$L^{r}$ endpoint, we need to show that for every $\lambda > 0$ there exists a set $E_\lambda \subset \tPP$ such that
  \begin{equation}\label{eqn:nonit-set-cond}
    \mu(E_\lambda) \lesssim \lambda^{-r} \|f\|_{r}^{r}
    \quad \text{and} \quad
    \big\| \1_{\tPP \sm E_\lambda} \|\mc{E}[f]\|_{X^3} \big\|_{S^\infty} \lesssim \lambda.
  \end{equation}
  Apply the tile selection (Proposition \ref{prop:tile-selection}) at level $\lambda$ to the function  $F(\mb{P})=\| \Emb[f](\mb{P}) \|_{X^{3}}$ to get a disjoint collection of tritiles $\BB_{\lambda}$ such that
\begin{equation*}
    \big\| \1_{\tPP \sm E_\lambda} \|\mc{E}_v[f]\|_{X^3} \big\|_{S^\infty}
    = \sup_{\substack{\mb{P} \in T \cap \tPP \sm E_\lambda \\ u \in \{0,1,2\}}} \|\langle f; w_{\mb{P}_u}\rangle\|_{X}
    \leq \lambda 
  \end{equation*}
  with $E_\lambda := \bigcup_{\mb{B} \in \BB} T(\mb{B})$. It remains to show the bound on $\mu(E_{\lambda})$.
  
  For each $\mb{B} \in \BB_{\lambda}$ there exists a tile $P_{\mb{B}}\in\mb{B}$ of the tritile $\mb{B}$ such that $\|\langle f; w_{P_{\mb{B}}} \rangle\|_{X} > \lambda$. The tritiles $\mb{B}$ are pairwise disjoint and thus so are the tiles $P_{\mb{B}}$; therefore we have
  \begin{equation*}
    \mu(E_\lambda)
    \leq \sum_{\mb{B} \in \BB_{\lambda}} |I_{\mb{B}}|
    \leq \lambda^{-r} \sum_{\mb{B} \in \BB_{\lambda}} \|\langle f; w_{P_{\mb{B}}} \rangle\|_X^r |I_{P_{\mb{B}}}|
    \lesssim \lambda^{-r} \|f\|_r^r
  \end{equation*}
  where the last estimate follows from Proposition \ref{prop:tile-orthogonality} applied to all finite subsets of $\BB_{\lambda}$.
\end{proof}

Now we prove the embeddings into iterated outer-$L^p$ spaces, which hold for all $p > 1$, but which are much harder to prove.

\begin{thm}\label{theorem:iterated-embeddings}
  Let $X$ be a Banach space which is $r$-Hilbertian for some $r \in [2,\infty)$.\footnote{Again, one could alternatively suppose that $X$ satisfies the bounds \eqref{eqn:tile-orthogonality}.}
  Then for all $p \in (1,\infty)$ and $q \in (\min(p,r)^\prime(r-1),\infty]$ the bound 
  \begin{equation*}
    \big\| \|\mc{E}[f]\|_{X^3} \big\|_{L_\nu^p \sL_\mu^q S^\infty} \lesssim \|f\|_{L^p(\WW;X)}
  \end{equation*}
  holds for all $f \in \Sch(\WW;X)$.
\end{thm}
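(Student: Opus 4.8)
The plan is to deduce the bound from weak-type endpoint estimates via the outer Marcinkiewicz interpolation theorem (Proposition~\ref{prop:outer-interpolation}) applied to the iterated outer structure $(\DD,\nu,X^3,\sL^q_\mu S^\infty)$. Fixing $q$, the admissible exponents are $p>q/(q-r+1)$, equivalently $\tfrac1p+\tfrac{r-1}{q}<1$ (in every case $q>r$), so it suffices to prove, for $p$ at either end of this open interval, that for each $\lambda>0$ there is a countable union of strips $E_\lambda\subset\tPP$ with
\[
  \nu(E_\lambda)\lesssim\lambda^{-p}\|f\|_{L^p(\WW;X)}^p
  \qquad\text{and}\qquad
  \bigl\|\1_{\tPP\sm E_\lambda}\,\mc{E}[f]\bigr\|_{\sL^q_\mu S^\infty}\lesssim\lambda;
\]
rescaling $\lambda$ then absorbs the implicit constants. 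Throughout I will use that $\1_D\mc{E}[f]=\1_D\mc{E}[f\1_{I_D}]$ for a strip $D$, that $\|\cdot\|_{S^\infty}$ is monotone under multiplication by $\1_D$, and that $\|\langle f;w_{\mb{P}_u}\rangle\|_X\le\|f\|_{\sL^1(I_{\mb{P}};X)}\le\|f\|_{\sL^s(I_{\mb{P}};X)}$ for any $s\ge1$.

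For $p\in(r,\infty)$ this is direct. Take $E_\lambda:=D(\{M_rf>c\lambda\})$, i.e.\ the union of the strips over the maximal intervals of $\{M_rf>c\lambda\}$; since $M_r$ is bounded on $L^p(\WW;X)$ for $p>r$ we get $\nu(E_\lambda)\le|\{M_rf>c\lambda\}|\lesssim\lambda^{-p}\|f\|_{L^p(\WW;X)}^p$. If $D\not\subset E_\lambda$ then $I_D$, and every $I_{\mb{P}}$ with $\mb{P}\in D\sm E_\lambda$, meets $\{M_rf\le c\lambda\}$, so $\|f\|_{\sL^r(I_D;X)}\le c\lambda$ and $\|\1_{D\sm E_\lambda}\mc{E}[f]\|_{S^\infty}\le c\lambda$. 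The weak-$L^r$ non-iterated embedding (Theorem~\ref{thm:non-iterated}) applied to $f\1_{I_D}$ then gives $\mu\bigl(\|\1_{D\sm E_\lambda}\mc{E}[f]\|_{S^\infty}>\lambda'\bigr)\lesssim\lambda'^{-r}|I_D|(c\lambda)^r$ for $\lambda'\le c\lambda$, and the superlevel measure vanishes for $\lambda'>c\lambda$; integrating against $q\lambda'^{q-1}$ and using $q>r$ yields $\|\1_{D\sm E_\lambda}\mc{E}[f]\|_{L^q_\mu S^\infty}\lesssim_q|I_D|^{1/q}c\lambda$, hence $\|\1_{D\sm E_\lambda}\mc{E}[f]\|_{\sL^q_\mu S^\infty(D)}\lesssim_q c\lambda\le\lambda$ once $c$ is small. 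With Marcinkiewicz (using weak type $(r,r)$ of $M_r$ at the other endpoint) this settles all $p>r$, for every $q>r$. The case $q=\infty$ is even simpler: $\|\1_D\mc{E}[f]\|_{\sL^\infty_\mu S^\infty}=\sup_{\mb{P}\in D,\,u}\|\langle f;w_{\mb{P}_u}\rangle\|_X\le\sup_{\mb{P}\in D}\|f\|_{\sL^1(I_{\mb{P}};X)}$, so $E_\lambda:=D(\{M_1f>\lambda\})$ works for all $p\in(1,\infty)$.

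The remaining range $p\in\bigl(q/(q-r+1),\,r\bigr)$ (with $q$ finite, so $q>p'(r-1)>r$) is the genuinely hard one: it lies below local $L^r$, where the Walsh tile-type estimate (Proposition~\ref{prop:tile-orthogonality}), being an $L^r$-inequality, cannot be applied to $f$ directly. I would proceed by taking $E_\lambda:=D(\{M_{p_0}f>\lambda\})$ for some $p_0\in(q/(q-r+1),p)$, so that $M_{p_0}$ is of weak type $(p,p)$ and $\nu(E_\lambda)\lesssim\lambda^{-p}\|f\|_{L^p(\WW;X)}^p$; then, on each strip $D\not\subset E_\lambda$ (where $\|f\|_{\sL^{p_0}(I_D;X)}\le\lambda$), estimate $\|\1_{D\sm E_\lambda}\mc{E}[f]\|_{L^q_\mu S^\infty}$ by a Calder\'on--Zygmund/stopping-time decomposition of $f\1_{I_D}$: the good part is bounded at the stopping level and absorbed via the non-iterated embedding with exponent $q>r$, while for the stopping pieces—whose local $L^{p_0}$-mass is of order $\lambda$—one controls $\mu\bigl(\|\cdot\|_{S^\infty}>\lambda'\bigr)$ at each inner level $\lambda'$ using the tile selection algorithm (Proposition~\ref{prop:tile-selection}) together with tile-type applied to the restriction of $f$ to the relevant stopping region, then sums over inner levels and stopping scales; the nested sum converges precisely because $q>p'(r-1)$. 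Alternatively, since in the $(1/p,1/q)$-plane the target region is the convex hull of the already-established pieces, this range can in principle be reached by interpolating the $p>r$ bounds against the $q=\infty$ bounds in both exponents, if one has an interpolation theorem for iterated outer-$L^p$ spaces with both exponents varying.

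The main obstacle is exactly this last range: arranging the stopping-time bookkeeping so that tile-type is only invoked on functions with genuinely controlled local $L^r$-mass, and verifying the convergence of the nested sum under the sharp hypothesis $q>p'(r-1)$ (or, in the interpolation route, establishing the required two-parameter interpolation). The easier cases, and all the reductions, rely only on Theorem~\ref{thm:non-iterated} and routine manipulations with the layer-cake representation of outer-$L^p$ quasinorms; once Theorem~\ref{theorem:iterated-embeddings} is in hand, Theorem~\ref{thm:intro-embeddings} follows from the size domination theorem via Corollary~\ref{cor:outer-Lp-domination}.
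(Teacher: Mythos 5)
Your reductions and the two easy regimes are correct and essentially coincide with the paper's own argument: the $q=\infty$ case via strips over a maximal-function level set, and the case $p>r$ (any $q>r$) by localising the weak non-iterated $L^r$ embedding of Theorem \ref{thm:non-iterated} to a strip $D$ on which $\|f\|_{\sL^r(I_D;X)}\lesssim\lambda$. (One small framing issue: for fixed $q$ the lower end $p=q/(q-r+1)$ of your interval corresponds to the critical value $q=p'(r-1)$, where no weak bound is ever proved; you should interpolate between interior exponents, which is harmless since the condition is open.) The genuine problem is that the substance of the theorem is exactly the range you leave as a sketch, namely $p<r$ with finite $q>p'(r-1)$ — the ``outside local $L^r$'' regime that the iterated spaces exist for — and your proposal does not contain a proof of it; you yourself flag the stopping-time bookkeeping and the convergence of the nested sum as unresolved.

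What is missing is the quantitative mechanism that the paper's `Endpoint 3' supplies. Inside each exceptional interval $I_n$ one decomposes $f\1_{I_D}=f_{-1}+\sum_{k\geq0}f_k$ according to the level sets $\{M_p(\|f\|_X)>2^k\lambda\}$, so that $\|f_k\|_{L^\infty}\lesssim 2^k\lambda$ and $|\spt f_k|\lesssim 2^{-kp}\sum_n|I_n|$; tile selection (Proposition \ref{prop:tile-selection}) applied to $\Emb[f_k]$ at level $2^{-\varepsilon k}\tau$, combined with tile-type through $\|f_k\|_{L^r}^r\leq\|f_k\|_{L^\infty}^r|\spt f_k|$, gives exceptional sets of $\mu$-measure $\lesssim(\lambda/\tau)^r2^{k(r(1+\varepsilon)-p)}|I_D|$. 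Since $r(1+\varepsilon)-p>0$, this sum over $k$ diverges; the decisive and entirely absent ingredient is the coefficient-decay estimate $\|\langle f_k;w_{\mb{P}_u}\rangle\|_X\lesssim 2^{k(1-p)}\lambda$ for $\mb{P}\in D\setminus K_\lambda$ (valid because any $I_n$ meeting $I_{\mb{P}}$ is contained in $I_{\mb{P}}$), which forces the $k$-th exceptional set to be empty once $2^{k(p-1-\varepsilon)}\gtrsim\lambda/\tau$ and hence truncates the sum at $2^{\bar k}\simeq(\lambda/\tau)^{1/(p-1-\varepsilon)}$, producing the total measure $(\lambda/\tau)^{(r-1)p/(p-1-\varepsilon)}|I_D|\lesssim(\lambda/\tau)^{q}|I_D|$ — this is exactly where the hypothesis $q>p'(r-1)$ enters, and without it your nested sum has no reason to converge. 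Your fallback route is also not available as stated: the paper's Proposition \ref{prop:outer-interpolation} interpolates only the outer exponent with the size (hence the inner exponent) fixed, and no two-parameter interpolation theorem for $L^p_\nu\sL^q_\mu$ spaces is provided here or in the cited literature; the multi-level decomposition above is precisely the substitute for it.
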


\begin{proof}
  Fix $p \in (1,\infty)$. We will establish various endpoints depending on the position of $p$ relative to $r$; interpolation will then yield the estimates that we claim.
  In all cases, we will first fix $\lambda > 0$ and utilise the set $K_\lambda \subset 3\PP$ defined (dependent on $p$) as follows: write
  \begin{equation*}
    \big\{x \in \WW : M_{\min(p,r)}(\|f\|_{X})(x) > \lambda\big\} = \bigcup_{n \in \NN} I_{n,\lambda}
  \end{equation*}
  as a disjoint union of (maximal) triadic intervals, and then define
  \begin{equation*}
    K_\lambda := \bigcup_{n \in \NN} D(I_{n,\lambda}),
  \end{equation*}
  where $D(I_{n,\lambda})$ is the strip generated by $I_{n,\lambda}$. Since the $\min(p,r)$-maximal function $M_{\min(p,r)}$ is of weak type $(p,p)$, we then have
  \begin{equation*}
    \nu(K_\lambda) \leq  \sum_{n \in \NN} |I_n| \lesssim  \lambda^{-p} \| f \|_{L^p(\WW;X)}.
  \end{equation*}
  In each case it remains to show for an appropriate exponent $q$ that 
  \begin{equation}\label{eqn:K-s-endpoint}
     \big\| \1_{\tPP \setminus K_\lambda} \|\mc{E}[f] \|_{X^3} \big\|_{\sL^{q,\infty}_{\mu}S^\infty} \lesssim \lambda
  \end{equation}
  for all $\lambda > 0$.
  
  \textbf{Endpoint 1: $p < \infty$, $q = \infty$.}
  Here we need to show that
  \begin{equation*}
    \big\| \1_{\tPP \setminus K_\lambda} \|\mc{E}[f] \|_{X^3} \big\|_{\sL^{\infty}_{\mu} S^\infty}
    = \big\| \1_{\tPP \setminus K_\lambda} \|\mc{E}[f] \|_{X^3} \big\|_{S^\infty} \lesssim \lambda.
  \end{equation*}
  This follows from the definition of $K_\lambda$:
  \begin{equation*}
    \big\| \1_{\tPP \sm K_\lambda} \|\mc{E}[f] \|_{X^3} \big\|_{S^\infty}
    = \sup_{\substack{\mb{P} \colon I_{\mb{P}} \not\subset \bigcup_{n\in\NN} I_{n,\lambda} \\ u \in \{0,1,2\}}} \|\langle f; w_{\mb{P}_u} \rangle\|_X 
    \leq \sup_{I \not\subset \bigcup_{n \in \NN} I_{n,\lambda}} \|f\|_{\sL^1(I;X)} \leq \lambda. 
  \end{equation*}

  \textbf{Endpoint 2: $p \geq r$, $q = r$.}
  We must show that for every strip $D \in \DD$ and every $\tau > 0$ there exists $E_\tau \subset \tPP$ such that
  \begin{equation}\label{eqn:energy-2-cond-2}
    \mu(E_\tau) \lesssim (\lambda/\tau)^{r}  |I_D|
    \quad \text{and} \quad
    \big| \1_{(D\setminus K_\lambda)\setminus E_\tau} \|\mc{E}[f] \|_{X^3}\big|_{S^\infty} \lesssim \tau.
  \end{equation}
  It suffices to assume that $\tau < \lambda$, for otherwise we can take $E_\tau = \emptyset$ and the result follows from Endpoint 1.

  Fix a strip $D$. We may assume $I_{D}\not\subset I_{n,\lambda}$ for all $n\in\N$, since otherwise $D\setminus K_{\lambda}=\emptyset$ and there is nothing to prove. It thus holds that
  \begin{equation}
    \label{eq:local-lq-strip-bound}
    \| f \|_{\sL^{r}(I_{D};X)}\leq     \| f \|_{\sL^{p}(I_{D};X)}\lesssim \lambda.
  \end{equation}
  For $\mb{P} \in D$ we have that $\langle f; w_{\mb{P}_v}\rangle= \langle f\1_{I_{D}}; w_{\mb{P}_v}\rangle$ for all $v\in\{0,1,2\}$.
  The non-iterated version of the embedding, i.e. Theorem \ref{thm:non-iterated}, then guarantees that 
  \begin{equation*}
     \big\|  \1_{D \sm K_\lambda} \|\mc{E}[f] \|_{X^3} \big\|_{\sL^{r,\infty} S^\infty}^{r} \lesssim \| f \1_{I_{D}} \|_{L^{r}} \lesssim \lambda^{r} |I_{D}|
  \end{equation*}
  and we are done.
  
  \textbf{`Endpoint' 3: $p < r$ and $q> p'(r-1)$.}
  We will show that for every strip $D \in \DD$ and every $\tau > 0$ there exists $E_\tau \subset \tPP$ such that
  \begin{equation}\label{eqn:energy-2-cond-2}
    \mu(E_\tau) \lesssim (\lambda/\tau)^{q}  |I_D|
    \quad \text{and} \quad
    \big\| \1_{(D \setminus K_{\lambda})\setminus E_\tau}  \|\mc{E}[f]\|_{X^3} \big\|_{S^\infty} \lesssim \tau.
  \end{equation}
  for any $q>p'(r-1)$. The result of Endpoint 1 allows us to consider only $q$ close to $p'(r-1)$ and extend the result to all $q$ by interpolation. Furthermore  it suffices to assume that $\tau < \lambda$, for otherwise we can take $E_\tau = \emptyset$ and the result follows from the $s=\infty$ bound.

  Fix a strip $D$.  As before we may assume $I_D \not\subset I_{n,\lambda}$ for all $n\in\NN$, so if $I_{n,\lambda}$ intersects $I_D$, we must have $I_{n,\lambda} \subsetneq I_D$. Henceforth we consider  only those indices $n\in\N$ for which $I_{n,\lambda} \subsetneq I_D$, and we drop $\lambda$ from the notation.  For each $k \in \NN$ let $(J_{n,k,m})_{m\in\N}$ denote the maximal subintervals of $I_{n}$ on which $M_p(\|f\|_{X}) > 2^k\lambda$.  

  Let us decompose $f$ by setting 
  \begin{equation*}
    \begin{aligned}
      f\1_{I_{D}} &=f_{-1}+ \sum_{k=0}^{\infty}f_{k}, \\
       f_{-1}&=f\1_{I_{D}\setminus\bigcup_{n\in\NN}I_{n}} \qquad 
       f_{k} := \sum_{n\in\NN}\sum_{m\in\NN} f \1_{\Delta J_{n,k,m}} \quad \forall k \in \N,
    \end{aligned}
  \end{equation*}
  with
  \begin{equation*}
    \Delta J_{n,k,m}=J_{n,k,m} \setminus\Bigl(  \bigcup_{m' \in\NN} \bigcup_{k' > k} J_{n,k',m'} \Bigr).
  \end{equation*}
  We have bounds
  \begin{equation}\label{eq:fk-bounds}
    \begin{aligned}
      & \| f_{k} \|_{L^{\infty}(\WW;X)}\leq 2^{k}\lambda \qquad\forall k\in\{-1\}\cup \NN,
      \\
      &  \sum_{m\in\NN} |\Delta J_{n,k,m}| \leq \sum_{m \in \N} |J_{n,k,m}| \lesssim 2^{-kp} |I_n| \qquad \forall n,k\in\NN'
    \end{aligned}
  \end{equation}
  the latter follows from the weak $L^{p}$ boundedness of $M_{p}$.
  
  This decomposition induces the following decomposition of the embedded function $\Emb[f\1_{I_D}]$:
  \begin{equation}\label{eqn:F-rep}
    \Emb[f\1_{I_D}] =  \sum_{k=-1}^\infty F_k, \qquad F_{k}:= \Emb[f_{k}] \quad \forall k\in \{-1\}\cup \NN.
  \end{equation}
  Now fix $\varepsilon > 0$, and for each $k \geq -1$ apply the tile selection of Proposition \ref{prop:tile-selection} to $F_k$ at level $2^{-\varepsilon k}\tau$, yielding sets $\BB_k$ and  $\tilde{E}_k := \sum_{\mb{B} \in \BB_k} T(\mb{B})$ of tritiles such that
  \begin{equation}\label{eqn:Fk-selection-est-1}
    \big\| \1_{(D \setminus K_\lambda)\setminus \tilde{E}_k} \|F_k\|_{X^3} \big\|_{S^\infty} \lesssim 2^{-\varepsilon k}\tau
  \end{equation}
  and
  \begin{equation}\label{eqn:Fk-selection-est-2}
    \begin{aligned}
      \mu(\tilde{E}_k) \leq \sum_{\mb{B} \in \tilde{E}_k} |I_{\mb{B}}| &\leq (2^{-\varepsilon k}\tau)^{-r} \|f_k\|_{L^{r}(\WW;X)}^r\\
      &\leq \tau^{-r} 2^{kr\epsilon} \| f_{k} \|_{L^{\infty}(\WW;X)}^{r} \bigl|\spt(f_{k})\bigr|\lesssim \Bigl(  \frac{\lambda}{\tau}\Bigr)^{r} 2^{k(r(1+\epsilon)-p)} |I_D|.
    \end{aligned}
  \end{equation}
  On the other hand for any $\mb{P}\in D\setminus K_{\lambda}$ one has that
  \begin{equation*}
    \|F_{k}(\mb{P})\|_{X^{3}} = \sup_{\substack{\mb{P}\in D\setminus K_{\lambda}\\u\in\{0,1,2\}}}\bigl|\langle f_{k};w_{\mb{P}_{u}} \rangle\bigr| \lesssim 2^{k(1-p)} \lambda \qquad \forall k\in\{-1\}\cup \NN.
  \end{equation*}
  For $k=-1$ this is a trivial consequence of (\ref{eq:fk-bounds}), while for $k\in\NN$ notice that $I_{n}\cap I_{\mb{P}}\ne \emptyset$ only if $I_{n}\subset I_{\mb{P}}$ so
 \begin{equation*}
    \begin{aligned}
      \bigl|\langle f_{k}; w_{\mb{P}_{u}}\rangle\bigr| &\leq\| f_{k} \|_{L^{\infty}}  \sum_{n\colon I_{n}\subset I_{\mb{P}}} \frac{\bigl|\spt{f_{k}}\cap I_{n}\bigr|}{|I_{\mb{P}}|}\lesssim \| f_{k} \|_{L^{\infty}(\WW;X)}  \sum_{n\colon I_{n}\subset I_{\mb{P}}}\sum_{m\in\N} \frac{|\Delta J_{n,m,k}|}{|I_{\mb{P}}|}
      \\
      & \lesssim 2^{k}\lambda \sum_{n\colon I_{n}\subset I_{\mb{P}}} \frac{2^{-kp}|I_{n}|}{|I_{\mb{P}}|} \leq 2^{k(1-p)}\lambda. 
    \end{aligned}
  \end{equation*}
  It follows that $\tilde{E}_k$ is empty when $2^{k(p-1-\epsilon)} \gtrsim  \frac{\lambda}{\tau}$, i.e. when $k \geq \bar{k}_{\lambda/\tau}$ with $2^{\bar{k}_{\lambda/\tau}}\simeq (\lambda/\tau)^{\frac{1}{p-1-\epsilon}}$

  We conclude by setting $E_\tau := \bigcup_{k=-1}^{\bar{k}_{\lambda/\tau}} \tilde{E}_{k}$. Since $r(1+\epsilon)-p>0$, estimate \eqref{eqn:Fk-selection-est-2} gives that 
  \begin{equation*}
    \mu(E_{\tau})\leq \sum_{k=-1}^{\bar{k}_{\lambda/\tau}}\mu(\tilde{E}_{k}) \lesssim \Bigl( \frac{\lambda}{\tau}\Bigr)^{r}  2^{\bar{k}_{\lambda/\tau}(r(1+\epsilon)-p)}|I_{D}| \lesssim \Bigl( \frac{\lambda}{\tau}\Bigr)^{(r-1)\frac{p}{p-1-\epsilon}} |I_{D}| \lesssim  \Bigl( \frac{\lambda}{\tau}\Bigr)^{q} |I_{D}| 
  \end{equation*}
  where the last inequality holds since $\epsilon>0$ is arbitrary and $\tau\lesssim\lambda$. On the other hand
  \begin{equation*}
    \big\| \1_{(D \setminus K_\lambda)\setminus E_{\tau}} \|F\|_{X^3} \big\|_{S^\infty} \lesssim \sum_{k=-1}^{\infty} \big\| \1_{(D \setminus K_\lambda)\setminus \tilde{E}_k} \|F_k\|_{X^3} \big\|_{S^\infty} \lesssim \sum_{k=-1}^{\infty} 2^{-\varepsilon k}\tau \lesssim \tau
  \end{equation*}
  and this concludes the proof.
  \end{proof}

\begin{proof}[Proof of Theorem \ref{thm:intro-embeddings}]
  The argument is identical for the iterated and non-iterated embeddings, so we only show the iterated case. By Corollary \ref{cor:outer-Lp-domination}, using that $X$ is UMD, for any convex $\AA\subset \tPP$ it holds that
  \begin{equation*} 
    \|\1_\AA \mc{E}[f] \|_{L_\nu^{p} \sL_\mu^q \RS} \lesssim \big\|\1_\AA\, \| \mc{E}[f] \|_{X^3} \big\|_{L_\nu^{p} \sL_\mu^q S^\infty} \qquad \forall f \in \Sch(\WW;X),
  \end{equation*}
  and by the iterated embeddings for $S^\infty$ (Theorem \ref{theorem:iterated-embeddings}), using that $X$ is $r$-Hilbertian,
  \begin{equation*}
    \bigl\|\1_{\AA}\, \|\mc{E}[f] \|_{X_v^3} \bigr\|_{L_\nu^{p} \sL_\mu^q S^\infty} \lesssim \bigl\| \|\mc{E}[f] \|_{X_v^3} \bigr\|_{L_\nu^{p} \sL_\mu^q S^\infty}  \lesssim \|f\|_{L^p(\WW;X)} \qquad \forall f \in \Sch(\WW;X).
  \end{equation*}
  The first inequality above follows by the unconditionality property of sizes and thus of outer-$L^{p}$ quasi-norms.  This completes the proof.
\end{proof}


\section{Applications to the tritile form}
\label{sec:tritile-form}
Again we consider three Banach spaces $X_0$, $X_1$, $X_2$ and a bounded trilinear form $\map{\Pi}{X_0 \times X_1 \times X_2}{\CC}$.
Each $X_u$ is assumed to be UMD and $r_u$-Hilbertian for some $r_u \in [2,\infty)$.
Recall that the tritile form is the trilinear form $\Lambda_\Pi \colon \prod_{u=0}^2 \Sch(\WW;X_u) \to \CC$ defined by
\begin{equation*}
  \Lambda_\Pi(f_0,f_1,f_2)
  := \sum_{\mb{P} \in \tPP} \Pi \Big( \langle f_0; w_{\mb{P}_{0}} \rangle,  \langle f_1; w_{\mb{P}_{1}} \rangle, \langle f_2; w_{\mb{P}_{2}} \rangle \Big) |I_{\mb{P}}|.
\end{equation*}
Using the embedding theorems from the previous section, we will establish $L^p$-bounds and sparse domination for $\Lambda_\Pi$.

\subsection{$L^p$ bounds}\label{sec:Lpbounds}

\begin{proof}[Proof of Theorem \ref{thm:intro-tritile-boundedness}]
  The condition \eqref{eqn:condn} guarantees the existence of a Hölder triple $(q_0,q_1,q_2)$ such that
  \begin{equation*}
    q_u > \min(p_u,r_u)^\prime(r_u - 1) 
  \end{equation*}
  for all $u \in \{0,1,2\}$, and then by Theorem \ref{thm:intro-embeddings} we have
  \begin{equation}\label{eqn:crit1}
    \| \mc{E} [f_u] \|_{L_\nu^{p_u} \sL_\mu^{q_u} \RS} \lesssim \| f_u \|_{L^{p_u}(\WW;X_u)} \qquad \forall f_u \in \Sch(\WW;X_u)
  \end{equation}
  for all $u$.
  By Remark \ref{rmk:tritile-reduction} this suffices to prove the theorem.
\end{proof}

The set of exponents $(p_u)_{u\in\{0,1,2\}}$ to which Theorem \ref{thm:intro-tritile-boundedness} applies (more precisely, the set of reciprocals $(1/p_u)_{u\in\{0,1,2\}}$) can be characterised as the interior of a polygon.
Let $\beta_u = 1/p_u$ and $\gamma_u := 1/r_u$.
Say that $(p_0,p_1,p_2)$ is admissible if
\begin{equation*}
  \sum_{u=0}^2 \frac{1}{\min(p_u,r_u)^\prime (r_u-1)} > 1.
\end{equation*}
We rewrite the left hand side of this condition as
\begin{align*}
  \sum_{u=0}^2 \frac{1}{\min(p_u,r_u)^\prime (r_u-1)}
  &= \sum_{u=0}^{2} \frac{1}{\max(p_u^\prime,r_u^\prime) (\frac{1}{\gamma_u} - 1)} \\
  &= \sum_{u=0}^{2} \min(1-\beta_u, 1-\gamma_u)\frac{\gamma_u}{1-\gamma_u}.
\end{align*}
It follows that an admissible exponent $(p_0,p_1,p_2)$ exists only if
\begin{equation}\label{eqn:kappa-1}
  \rho := \bigg(\sum_{u=0}^{2} \gamma_u\bigg) - 1 > 0,
\end{equation}
and we assume this condition in what follows.
Consider the set of exponents
\begin{equation*}
  S := \bigg\{\beta \in (-\infty,1]^3 : \sum_{u=0}^{2} \beta_u = 1, \, \sum_{u=0}^{2} \min(1-\beta_u, 1-\gamma_u)\frac{\gamma_u}{1-\gamma_u} > 1\bigg\}.
\end{equation*}
This set is the interior of a polygon; the vertices of this polygon may be found by choosing $w \in \{0,1,2\} \sm \{u\}$  arbitrarily, setting $\beta_u = \gamma_u$, and making $\beta_w > \gamma_w$ as large as possible.
Let $v$ be the single element of $\{0,1,2\} \sm \{u,w\}$, so that $1 - \beta_v =  \beta_u + \beta_w = \gamma_u + \beta_w$.
Then the second condition in the definition of $S$, for $\beta_w > 1 - \gamma_v - \gamma_u$, becomes
\begin{equation*}
  \gamma_u + (1-\beta_w)\frac{\gamma_w}{1-\gamma_w} + \gamma_v > 1.
\end{equation*}
Rearranging this gives
\begin{equation*}
  \beta_w < \gamma_w + \rho\bigg(\frac{1}{\gamma_w} - 1\bigg),
\end{equation*}
so the vertices of $\partial S$ are given by the $6$ points $\beta$ in the Hölder triangle determined by their $(u,w)$-components
\begin{equation}\label{eqn:vertices}
  (\beta_u,\beta_w) = (\kappa_u, \gamma_w + \rho(\gamma_w^{-1} - 1)) \qquad (u \neq w \in \{0,1,2\}).
\end{equation}
The region of exponents $(\beta_u) = (p_u^{-1})$ to which Theorem \ref{thm:intro-tritile-boundedness} applies is thus the interior of the convex hull of the $6$ points in \eqref{eqn:vertices}, intersected with the cube $(0,1)^3$ (noting that $S$ generally contains some exponents with nonpositive entries).

Thus, comparing our result with that of Hytönen, Lacey, and Parissis \cite{HLP13}, we see that we obtain the same $L^p$ bounds for the tritile operator as they do for the quartile operator when restricted to the reflexive range $p_u \in (1,\infty)$.\footnote{As mentioned in the introduction, the sparse domination obtained in Theorem \ref{thm:intro-sparse-bounds} implies the range of estimates corresponding to those obtained in \cite{HLP13}. Once more we thank the anonymous referee for this observation.}

\subsection{Sparse domination}

\begin{proof}[Proof of Theorem \ref{thm:intro-sparse-bounds}]
  We follow the argument in \cite[\textsection 1.4.3]{gU-thesis}.

  We will show the following abstract sparse domination result: for any Hölder triple $(q_{u})_{u\in\{0,1,2\}}$ and any triple of exponents $p_{u}\in[1,\infty)$, we have the bound 
  \begin{equation}\label{eqn:abstract-domination}
    \begin{aligned}
      \sum_{\mb{P}\in \tPP} \Bigl| &\Pi^* \bigl(F_{0}(\mb{P}),F_{1}(\mb{P}),F_{2}(\mb{P})\bigr)\Bigr|\,|I_{\mb{P}}| \\
      &\lesssim \sup_{\| \mc{G} \|_{sp}\leq1} \sum_{I\in\mc{G}}|I| \prod_{u=0}^{2}|I|^{-1/p_{u}}\|\1_{D(I)} F_{u} \|_{L^{p_{u}}_{\nu}\sL^{q_{u}}_{\mu}\RS}.
    \end{aligned}
    \end{equation}
    for any $F_{u}\in\Bor(\tPP;X_{u}^3)$.
    This result suffices to prove the theorem; to see this, let $F_{u}=\Emb[f_{u}]$ and notice that $F_u = \Emb[f_u \1_I]$ on $D(I)$.
    Thus by Theorem \ref{thm:intro-embeddings}, choosing the H\"older triple $(q_u)_{u \in \{0,1,2\}}$ such that $q_u > \min(p_u,r_u)'(r_u - 1)$ for each $u$ (such a choice is possible by condition \eqref{eqn:condn}), the bound 
    \begin{equation*}
      |I|^{-1/p_{u}}\|\1_{D(I)} F_{u} \|_{L^{p_{u}}_{\nu}\sL^{q_{u}}_{\mu}\RS} \lesssim \| f_{u} \|_{\sL^{p_{u}}(I;X)}
    \end{equation*}
    holds.
    Since $|\Lambda_{\Pi}(f_{0},f_{1},f_{2})|\leq \sum_{\mb{P}\in \tPP} \Bigl| \Pi^*\bigl(F_{0}(\mb{P}),F_{1}(\mb{P}),F_{2}(\mb{P})\bigr)\Bigr|\,|I_{\mb{P}}| $ this implies the conclusion of the theorem.

    It remains to show that \eqref{eqn:abstract-domination} holds. The definition of the iterated outer-$L^p$ quasinorms implies that for every strip  $D$, there exists a subset $K_D \subset D$ such that
  \begin{align}
    \label{eqn:f-iter-bd}
    \|  \1_{D \setminus K_D}\,F_{u} \|_{\sL_\mu^{q_u} \RS} &\lesssim \nu(D)^{-1/p_{u}}\|\1_{D}F_{u}\|_{L^{p_{u}}_{\mu}\sL^{q_{u}}\RS} \quad \forall u \in \{0,1,2\},
    \\
    \label{eqn:sparse-meas}
         \nu(K_D) &\leq \nu(D)/2.
  \end{align}
  Without loss of generality we may assume that $K_{D}$ is a union of strips, i.e. $K_{D}=\bigcup_{I\in\mc{J}(I_{D})}D(I)$, and that these strips are pairwise disjoint. 
  Set $\mc{G}_0 = \{I_{0}\}$ for some initial interval.  Proceed iteratively: having defined a collection of intervals $\mc{G}_n$, define
  \begin{equation*}
   \mc{G}_{n+1} := \bigcup_{I\in\mc{G}_{n}} \mc{J}(I )
  \end{equation*}
  where $\mc{J}(I)$ is the set of intervals defined by \eqref{eqn:f-iter-bd} and \eqref{eqn:sparse-meas} (with $D = D(I)$). The bound \eqref{eqn:sparse-meas} guarantees, by induction, that
  \begin{equation*}
    \max_{I\in\mc{G}_{n}} |I|\leq 2^{-n}|I_{0}|
  \end{equation*}
  and thus $\bigcap_{n\in\N}\bigcup_{I\in\mc{G}_{n}} I=\emptyset$.   Let $(q_{u})_{u\in\{0,1,2\}}$ be any Hölder triple; using the Hölder inequality for $L_\mu^{q_{u}} \RS$ gives us
  \begin{equation}\label{eqn:strip-sp-dom}
    \begin{aligned}
      &\sum_{\mb{P}\in D(I_{0})} \Bigl| \Pi^*\bigl(F_{0}(\mb{P}),F_{1}(\mb{P}),F_{2}(\mb{P})\bigr)\,|I_{\mb{P}}| \Bigr|
      \\
      &=
      \sum_{n=0}^{\infty}\sum_{I\in\mc{G}_{n}} \sum_{\mb{P}\in\tPP} \Bigl| \1_{D(I)\setminus K_{D(I)}}\Pi^*\bigl(F_{0}(\mb{P}),F_{1}(\mb{P}),F_{2}(\mb{P})\bigr)\Bigr|\,|I_{\mb{P}}|  
      \\
      &
      \lesssim \sum_{n=0}^{\infty}\sum_{I\in\mc{G}_{n}} \prod_{u=0}^{2}\|\1_{D(I)\setminus K_{D(I)}} F_{u} \|_{L^{q_{u}}_{\mu}\RS}
      \\
      &\lesssim \sum_{n=0}^{\infty}\sum_{I\in\mc{G}_{n}}|I| \prod_{u=0}^{2}|I|^{-1/p_{u}}\|\1_{D(I)} F_{u} \|_{L^{p_{u}}_{\nu}\sL^{q_{u}}_{\mu}\RS}.
    \end{aligned}
  \end{equation}
  Recall that for any $I\in\mc{G}$ we have set  $K_{D(I)}=\bigcup_{I'\in\mc{J}(I)} D(I') $ so that \eqref{eqn:f-iter-bd} holds and guarantees the last bound. 

  We now show that $\mc{G}=\bigcup_{n\in\N}\mc{G}_{n}$ is sparse with $\| \mc{G} \|_{sp}\leq 1$.     The intervals of $\mc{G}$ are nested in the sense that if $J\in\mc{G}_{n+1}$ then there exists $J'\supset J$ with $J'\in\mc{G}_{n}$.  First suppose $I\in\mc{G}_{n_{0}}$ for some $n_{0}\in \NN$; it follows by induction from \eqref{eqn:sparse-meas} that
  \begin{equation*}
    \sum_{ \substack{J\subset I\\J\in\mc{G}_{n_{0}+k} }} |J| \leq |I|/2^{k}.
  \end{equation*}
  and thus
  \begin{equation*}
    \sum_{ \substack{J\subset I\\J\in\mc{G}}} |J|=\sum_{k=1}^{\infty} \sum_{ \substack{J\subset I\\J\in\mc{G}_{n_{0}+k} }} |J| \leq \sum_{k=0}^{\infty}|I|/2^{k}= |I|.
  \end{equation*}
  If $I\notin \mc{G}$, then there exists $n_{0}\in\NN$ and disjoint intervals $I_{m}\subset I$, $I\in\mc{G}_{n_{0}}$ such that
  \begin{equation*}
  \{J\subset I \colon J\in\mc{G} \}=\bigcup_{m\in\NN} \{J\subset I_{m}\colon J\in\mc{G}\}.
  \end{equation*}
  Thus $\| \mc{G} \|_{sp}\leq 1$.

  For any $F_{u}\in\Bor(\tPP;X_{u}^3)$ with $u\in\{0,1,2\}$ we can write
  \begin{equation*}
     \sum_{\mb{P} \in \tPP} \Bigl | \Pi^*\bigl(F_{0}(\mb{P}),F_{1}(\mb{P}),F_{2}(\mb{P})\bigr)\Bigr||I_{\mb{P}}| 
     \leq \sup_{D_{0}\in\DD}\sum_{\mb{P}\in D_{0}} \Bigl| \Pi^*\bigl(F_{0}(\mb{P}),F_{1}(\mb{P}),F_{2}(\mb{P})\bigr)\Bigr||I_{\mb{P}}| .
   \end{equation*}
   Estimating the sum over $D_0$ via \eqref{eqn:strip-sp-dom} and using that $\|\mc{G}\|_{sp} \leq 1$ for the collections $\mc{G}$ that we constructed shows that \eqref{eqn:abstract-domination} holds, and completes the proof.
\end{proof}



\section{Appendix: Arguing via $R$-bounds and RMF}
\label{sec:RMF}
In this appendix we present an alternative approach to our main results.
In this approach, the randomised sizes are simplified; the defect operator does not appear, and the sizes resemble more closely the sizes used in \cite{HLP13} and \cite{DPO18} or in earlier scalar-valued proofs.
For this simplicity we pay the price of having to manipulate $R$-bounds, which leads to the assumption of the RMF property on the trilinear form $\Pi$.
The same technical difficulty occured in \cite{DPO18}; a new method of obtaining these results without RMF was recently given in \cite{DPLMV19}.

\subsection{$R$-bounds and the RMF property}\label{sec:RMF}

First we recall the definition of \emph{$K$-convexity} of a Banach space.
All the spaces that we consider, being UMD spaces, are $K$-convex \cite[Proposition 4.3.10]{HNVW16}.

\begin{defn}
  A Banach space $X$ is \emph{$K$-convex} if for all finite sequences $(x_n)_{n=1}^N$ in $X$,
\begin{equation*}
  \E \bigg\|\sum_{n=1}^N \varepsilon_n x_n \bigg\|_X
  \simeq \sup_{(x_n^*)} \bigg| \sum_{n=1}^N x_n^*(x_n) \bigg|
\end{equation*}
where the supremum is taken over all sequences $(x_n^*)_{n=1}^N$ in $X^*$ such that
\begin{equation*}
  \E \bigg\|\sum_{n=1}^N \varepsilon_n x_n^* \bigg\|_{X^*} = 1,
\end{equation*}
with implicit constant independent of $N$.
\end{defn}

As a technical tool, we use a strong notion of boundedness of a set of operators known as \emph{$R$-boundedness}.
For a short history of this concept see \cite[\textsection 8.7]{HNVW17}.

\begin{defn}\label{dfn:R-bound}
Let $X$ and $Y$ be Banach spaces, and let $\mc{T} \subset \mc{L}(X,Y)$ be a set of bounded operators from $X$ to $Y$. We say that the set $\mc{T}$ is \emph{$R$-bounded} if there exists a constant $C > 0$ such that for all finite sequences $(x_n)_{n=1}^N$ in $X$ and $(T_n)_{n=1}^N$ in $\mc{T}$, the estimate
\begin{equation}\label{eqn:R-bound}
  \E \bigg\| \sum_{n=1}^N \varepsilon_n T_n x_n \bigg\|_Y \leq C \E \bigg\| \sum_{n=1}^N \varepsilon_n x_n \bigg\|_X
\end{equation}
holds. 
The smallest allowable $C$ in this estimate is called the \emph{$R$-bound} of $\mc{T}$, and denoted by $R(\mc{T})$. 
\end{defn}

If $\mc{T}$ is $R$-bounded, then $\mc{T}$ is uniformly bounded in norm (consider $N=1$ in \eqref{eqn:R-bound}).
If $Y$ is $K$-convex, then $\mc{T}$ is $R$-bounded if and only if
\begin{equation*}
  \bigg| \sum_{n=1}^N y_n^*(T_n x_n) \bigg| \leq C\, \E \bigg\| \sum_{n=1}^N \varepsilon_n x_n \bigg\|_X
  \E \bigg\| \sum_{n=1}^N \varepsilon_n y_n^* \bigg\|_{Y^*} 
\end{equation*}
for all sequences $(x_n)$ in $X$, $(y_n^*)$ in $Y^*$, and $(T_n)$ in $\mc{T}$, and the smallest admissible constant $C$ here is equivalent to $R(T)$.
This is analogous to the Hölder inequality
\begin{equation*}
  \bigg| \sum_{n \in \NN} f(n)g(n)h(n) \bigg| \leq \|f\|_{\ell^\infty} \|g\|_{\ell^2} \|h\|_{\ell^2}
\end{equation*}
for sequences $f,g,h \colon \NN \to \CC$, keeping in mind the analogy between Rademacher sums and square functions.

The concept of $R$-boundedness applies to sets of operators, but we can apply it to sets of vectors by viewing them as operators.
This relies on the additional information of an identification of vectors with operators.

\begin{defn}\label{defn:Rbd-embedding}
  Let $X$ be a Banach space, and consider an embedding (i.e. a bounded linear injective map) $\iota \colon X \hookrightarrow \mc{L}(Y,Z)$, where $Y$ and $Z$ are Banach spaces and $\mc{L}(Y,Z)$ is the Banach space of bounded linear operators from $Y$ to $Z$.
  Then for all subsets $V \subset X$ we define the $R$-bound of $V$ with respect to the embedding $\iota$ by
  \begin{equation*}
    R_\iota(V) := R(\iota(V)).
  \end{equation*}
\end{defn}

This definition leads to an analogue of the Doob (or dyadic Hardy--Littlewood) maximal function, with $R$-bounds replacing norm bounds.

\begin{defn}
  Let $X$ be a Banach space, and consider an embedding $\iota \colon X \to \mc{L}(Y,Z)$ for some Banach spaces $Y$ and $Z$.  Let $(\mc{F}_n)_{n \in \N}$ be a filtration on a $\sigma$-finite measure space $(\Xi,\mc{F},\mu)$.
  The \emph{Rademacher maximal operator} $\mc{M}_\iota$ with respect to $\iota$ is defined on $\mc{F}$-measurable functions $\map{f}{\Xi}{X}$ by
  \begin{equation*}
    \mc{M}_{\iota}f(x) := R_\iota(\{\E[f|\mc{F}_n](x) : n \in \NN\}) \qquad  \forall x \in \Xi,
  \end{equation*}
  where $\E[f|\mc{F}_n]$ is the conditional expectation of $f$ on $\mc{F}_n$.
\end{defn}

The $L^p$-boundedness of this maximal function is a geometric property of the embedding $\iota$ (and thus of the Banach spaces $X,Y,Z$), which may or may not hold.
Thus it is given a name.

\begin{defn}
  Let $X$ be a Banach space, and $\iota \colon X \to \mc{L}(Y,Z)$ an embedding of $X$ into the bounded linear operators between some Banach spaces $Y$ and $Z$.
  We say that $\iota$ has the \emph{RMF property} if for all filtrations and measure spaces as above, $\mc{M}_{\iota}$ is bounded from $L^p(\Xi;X)$ to $L^p(\Xi)$ for all $p \in (1,\infty)$.
  More concisely, we say that $\iota$ is an \emph{RMF embedding of $X$}, without making explicit reference to the spaces $Y$ and $Z$.
\end{defn}

The RMF property is independent of the choice of filtraton $\mc{F}_n$ and $\sigma$-finite measure space $(\Xi,\mc{F},\mu)$ \cite[Theorem 5.1]{mK11}.
Furthermore, if $\iota$ is RMF, then $Z$ is $K$-convex by \cite[Proposition 4.2]{mK11} (using the equivalence of $K$-convexity and non-trivial type \cite[Theorem 7.4.23]{HNVW17}).

We say that a Banach space $X$ has the RMF property (without reference to an embedding) if the natural embedding $\iota \colon X \to \mc{L}(X^*,\CC)$ has the RMF property.
In this case, for a function $\map{f}{\RR}{X}$ on the line equipped with the dyadic filtration, we have 
\begin{equation*}
  \mc{M}_{\iota}f(x) = \sup \bigg\{\E \bigg\| \sum_{I \ni x} \varepsilon_I \lambda_I \langle f \rangle_I \bigg\|_X \bigg\} \qquad \forall x \in \RR
\end{equation*}
where the sum is over all dyadic intervals $I$ containing $x$, and where the supremum is taken over all normalised sequences $\lambda \in \ell^2(\mc{D})$ on the set $\mc{D}$ of dyadic intervals in $\RR$.
This form of the RMF property was first introduced by Hyt\"onen, McIntosh, and Portal \cite{HMcP08},\footnote{The form we use here, in which the Banach space $X$ is identified with a space of operators via an embedding, seems to have been introduced by Kemppainen \cite{mK11}.} who proved that the following classes of Banach spaces are RMF:
\begin{itemize}
\item all spaces of type $2$,
\item all UMD Banach lattices (including $L^p$-spaces with $p \in (1,\infty)$),
\item all noncommutative $L^p$ spaces with $p \in (1,\infty)$, and in particular the Schatten classes $\mc{C}^p$ with $p \in (1,\infty)$. 
\end{itemize}
They also proved that $\ell^1$ does not have the RMF property.
It is not known whether the UMD property implies the RMF property.
The converse fails: there exists a space of type $2$ (and therefore with the RMF property) which is not reflexive, and therefore not UMD \cite{rJ78}.

Now consider a triple of Banach spaces $X_0$, $X_1$, $X_2$ and a bounded trilinear form $\map{\Pi}{X_0 \times X_1 \times X_2}{\CC}$.
Indexing $\{0,1,2\} = \{u,v,w\}$ arbitrarily, the trilinear form induces natural embeddings $\iota_\Pi^u \colon X_u \to \mc{L}(X_v,X_w^*)$.
Thus for a set of vectors $V \subset X_u$ we have an $R$-bound $R_\Pi(V) := R_{\iota_\Pi^u}(V)$ and a Rademacher maximal operator $\mc{M}_\Pi := \mc{M}_{\iota_\Pi^u}$.
We generally omit $\iota$ and $u$ in our notation, and observe that the indexing $\{0,1,2\} \sm \{u\}$ does not affect these quantities, as $R$-bounds are preserved under taking adjoints.
We say that $\Pi$ has the RMF property if each embedding $\iota_\Pi^u$ is RMF.

If each $X_u$ is $K$-convex (as is the case when each $X_u$ is UMD) then for all subsets $V \subset X_u$, the $R$-bound $R_{\Pi}(V)$ is equivalent to the smallest constant $C$ such that
\begin{equation}\label{eqn:Rbd-tri}
  \bigg| \sum_{n=1}^N \Pi(x_{0,n}, x_{1,n}, x_{2,n}) \bigg| \leq C \prod_{v \neq u} \E \bigg\| \sum_{n=1}^N \varepsilon_n x_{v,n} \bigg\|_{X_v}
\end{equation}
for all finite sequences $(x_{u,n})_{n=1}^N \subset V$, and $(x_{v,n})_{n=1}^N \subset X_v$, $v \in \{0,1,2\} \sm \{u\}$.

\begin{rmk}
	Although all known UMD spaces have the RMF property, this does not tell us that a trilinear form $\Pi \colon X_0 \times X_1 \times X_2 \to \CC$ automatically has the RMF property when each $X_u$ is UMD.
	If each $X_u$ is a UMD Banach function space and $\Pi$ is the pointwise product, then by the Khintchine--Maurey theorem \eqref{eqn:khintchine-maurey} one can reduce matters to boundedness of the lattice maximal function, which follows from UMD, and so $\Pi$ has the RMF property.
	However, a natural trilinear form is given by composing the composition map on Schatten classes $\mc{C}^p \times \mc{C}^q \times \mc{C}^r \to \mc{C}^1$ (when $p^{-1} + q^{-1} + r^{-1} = 1$ with the trace map $\mc{C}^1 \to \CC$; here each of the Banach spaces is UMD (and each has RMF, as an individual Banach space) but it is not known whether the constructed trilinear form has the RMF property.
\end{rmk}

\subsection{Randomised sizes with $R$-bounds}

We can use the notion of $R$-boundedness in the previous section to define new randomised sizes.
The key difference between these sizes and those defined in Section \ref{sec:outermeasures} are that the defect operator is not used, and that $R$-bounds appear in the overlapping sizes. 

In this section we always assume that  $X_0$, $X_1$, and $X_2$ are $K$-convex Banach spaces, and $\Pi \colon X_0 \times X_1 \times X_2 \to \CC$ is a bounded trilinear form.

\begin{defn}[Randomised sizes]\label{def:randomized-size-R}
  For $u\in\{0,1,2\}$ the $X_u$-size $\RS_u$ is given by
  \begin{equation*}
    \|F\|_{\RS_u(T)} := \sum_{v\in\{0,1,2\}} \|F\|_{\RS_{u,v}(T)} \qquad \forall F \in \Bor(\tPP;X_u),
  \end{equation*}
  where 
  \begin{equation}\label{eq:randomized-size}
    \|F\|_{\RS_{u,v}(T)} := \begin{cases}
      \bigg( \fint_{I_T} R_\Pi(\{F(\mb{P})\1_{I_{\mb{P}}}(x) : \mb{P} \in T^u\})^3 \, \dd x \bigg)^{1/3} & (v = u) \\
      \bigg( \fint_{I_T} \E \bigg\| \sum_{\mb{P} \in T^v} \varepsilon_{\mb{P}} F(\mb{P}) \1_{I_{\mb{P}}}(x) \bigg\|_{X}^3 \, \dd x \bigg)^{1/3}& (v \neq u).
    \end{cases}
  \end{equation}
\end{defn}

	Given $F_u \in \Bor(\tPP;X_u)$, define the function $\Pi(F_0,F_1,F_2) \colon \tPP \to \CC$ by
	\begin{equation*}
		\Pi(F_0,F_1,F_2)(\mb{P}) := \Pi(F_0(\mb{P}),F_1(\mb{P}), F_2(\mb{P})).
	\end{equation*}

\begin{prop}\label{prop:size-holder-R}
	For all $F_u \in \Bor(\tPP;X_u)$,
  \begin{equation*}
    \| \Pi(F_0,F_1, F_2) \|_{S^1(T)} \leq \prod_{u=0}^{2} \|F_u\|_{\RS_{u}(T)} \qquad \forall T \in \TT.
  \end{equation*}
 A corresponding H\"older inequality for outer-Lebesgue spaces follows as in Section \ref{sec:outermeasures}.
\end{prop}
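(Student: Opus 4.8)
The plan is to follow the proof of Proposition~\ref{prop:new-size-holder}, replacing the roles played there by the defect operator and the reconstruction formula (Proposition~\ref{prop:func-reconstruction}) with a single application of the $K$-convexity characterization~\eqref{eqn:Rbd-tri} of the $R$-bound of $\Pi$; this shortens the argument considerably, since no reconstruction is needed. As in that proof, I would first restrict the sum defining $\|\Pi(F_0,F_1,F_2)\|_{S^1(T)}$ to the tritiles with $|I_{\mb{P}}|>2^{-N}$, prove a bound uniform in $N$, and pass to the limit at the end; this legitimizes the use of~\eqref{eqn:Rbd-tri}, which is stated only for finite sequences. Then, using that $\mb{P}_T\in T^u$ for each $u$ and that $T^{0}\setminus\{\mb{P}_T\}$, $T^{1}\setminus\{\mb{P}_T\}$, $T^{2}\setminus\{\mb{P}_T\}$ partition $T\setminus\{\mb{P}_T\}$, one has
\begin{equation*}
  \sum_{\mb{P}\in T}\bigl|\Pi(F_0(\mb{P}),F_1(\mb{P}),F_2(\mb{P}))\bigr|\,|I_{\mb{P}}|
  \le \sum_{u=0}^{2}\sum_{\mb{P}\in T^{u}}\bigl|\Pi(F_0(\mb{P}),F_1(\mb{P}),F_2(\mb{P}))\bigr|\,|I_{\mb{P}}|,
\end{equation*}
so it suffices to estimate the $u$-th summand for each $u$ separately; I treat $u=0$, the cases $u=1,2$ being identical.

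For the $u=0$ summand I would pick unimodular scalars $a_{\mb{P}}$ so that $\bigl|\Pi(F_0(\mb{P}),F_1(\mb{P}),F_2(\mb{P}))\bigr|=\Pi\bigl(F_0(\mb{P}),a_{\mb{P}}F_1(\mb{P}),F_2(\mb{P})\bigr)$, write $|I_{\mb{P}}|=\int_{I_T}\1_{I_{\mb{P}}}(x)\,\dd x$ (valid since $I_{\mb{P}}\subseteq I_T$ for $\mb{P}\in T^{0}$), and interchange sum and integral (all terms being nonnegative) to obtain
\begin{equation*}
  \sum_{\mb{P}\in T^{0}}\bigl|\Pi(F_0(\mb{P}),F_1(\mb{P}),F_2(\mb{P}))\bigr|\,|I_{\mb{P}}|
  = \int_{I_T}\sum_{\substack{\mb{P}\in T^{0}\\ x\in I_{\mb{P}}}}\Pi\bigl(F_0(\mb{P}),a_{\mb{P}}F_1(\mb{P}),F_2(\mb{P})\bigr)\,\dd x.
\end{equation*}
For each fixed $x\in I_T$ the tritiles $\mb{P}\in T^{0}$ with $x\in I_{\mb{P}}$ form a finite family (they are nested in scale), so I would apply~\eqref{eqn:Rbd-tri} with the $X_0$-valued entries in the $R$-bound position, bounding the inner sum by
\begin{equation*}
  R_\Pi\bigl(\{F_0(\mb{P})\1_{I_{\mb{P}}}(x):\mb{P}\in T^{0}\}\bigr)\cdot
  \E\Bigl\|\sum_{\mb{P}\in T^{0}}\varepsilon_{\mb{P}}a_{\mb{P}}F_1(\mb{P})\1_{I_{\mb{P}}}(x)\Bigr\|_{X_1}\cdot
  \E\Bigl\|\sum_{\mb{P}\in T^{0}}\varepsilon_{\mb{P}}F_2(\mb{P})\1_{I_{\mb{P}}}(x)\Bigr\|_{X_2},
\end{equation*}
where Kahane's contraction principle (Theorem~\ref{thm:contraction-principle}) removes the $a_{\mb{P}}$ from the first Rademacher sum.

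It then remains to integrate over $x\in I_T$, apply H\"older's inequality with exponents $(3,3,3)$, use Kahane--Khintchine (Theorem~\ref{thm:kahane-khintchine}) to pass from $\E\|\cdot\|$ to $(\E\|\cdot\|^{3})^{1/3}$ inside each factor, and divide by $|I_T|$; the powers of $|I_T|$ match since $\|\cdot\|_{S^1(T)}$ carries a factor $|I_T|^{-1}$ while each $\RS_{u,v}(T)$ carries $|I_T|^{-1/3}$, and the three factors produced are exactly $\RS_{0,0}(T)$, $\RS_{1,0}(T)$, $\RS_{2,0}(T)$. Summing the resulting estimates over $u\in\{0,1,2\}$ and using that $\sum_{u}\RS_{0,u}(T)\RS_{1,u}(T)\RS_{2,u}(T)\le\prod_{u=0}^{2}\bigl(\sum_{v}\RS_{u,v}(T)\bigr)=\prod_{u=0}^{2}\|F_u\|_{\RS_u(T)}$ for nonnegative reals completes the proof. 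I do not expect a genuine obstacle here: the $R$-bound does all the work that the defect operator and the reconstruction formula did in Proposition~\ref{prop:new-size-holder}. The only points needing care are the bookkeeping that places the $X_0$-valued function in the $R$-bound slot of~\eqref{eqn:Rbd-tri} and the functions $F_1,F_2$ in the two Rademacher-sum slots, consistently with the asymmetric definition of $\RS_{u,v}$ (the diagonal case $v=u$ being an $R$-bound and the off-diagonal cases $v\neq u$ being Rademacher sums), together with the standard truncation/limiting argument and the absorption of the universal constants coming from~\eqref{eqn:Rbd-tri}, Theorem~\ref{thm:contraction-principle}, and Theorem~\ref{thm:kahane-khintchine}.
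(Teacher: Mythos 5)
Your proof is correct and follows essentially the same route as the paper: split the sum over the components $T^u$, remove the absolute values with unimodular coefficients, write $|I_{\mb{P}}|$ as an integral of $\1_{I_{\mb{P}}}$, apply the $K$-convex characterisation \eqref{eqn:Rbd-tri} of $R_\Pi$ pointwise in $x$, and finish with H\"older in $x$ against the $L^3$-averaged sizes. The truncation to finitely many scales, the explicit use of the contraction principle to discard the coefficients $a_{\mb{P}}$, and the sum-versus-product inequality at the end are just more careful bookkeeping of steps the paper leaves implicit.
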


\begin{proof}
  First note that
  \begin{equation*}
    \sum_{\mb{P} \in T} |\Pi(F_0(\mb{P}), F_1(\mb{P}), F_2(\mb{P}))||I_{\mb{P}}|
    \leq \sum_{u=0}^2 \sum_{\mb{P} \in T^u} |\Pi(F_0(\mb{P}), F_1(\mb{P}), F_2(\mb{P}))||I_{\mb{P}}|,
  \end{equation*}
  so it suffices to fix $u \in \{0,1,2\}$ and deal with the summands individually.
  Fix a normalised sequence $a \in \ell^\infty(T^u;\CC)$ and estimate by duality
   \begin{align*}
     &\bigg| \sum_{\mb{P} \in T^u} a(\mb{P}) \Pi(F_0(\mb{P}), F_1(\mb{P}), F_2(\mb{P})) |I_{\mb{P}}| \bigg| \\
     &\leq \int_{I_T} \bigg| \sum_{\mb{P} \in T^u} a(\mb{P})  \Pi( F_0(\mb{P}) \1_{I_\mb{P}}(x), F_1(\mb{P}) \1_{I_\mb{P}}(x), F_2(\mb{P}) \1_{I_\mb{P}}(x)) \bigg| \, \dd x \\ 
     &\lesssim \int_{I_T} R_{\Pi}(\{F_u(\mb{P})\1_{I_{\mb{P}}}(x) : \mb{P} \in T^u \}) \prod_{v\neq u} \E \bigg\| \sum_{\mb{P} \in T^u} \varepsilon_{\mb{P}} F_v(\mb{P}) \1_{I_\mb{P}}(x) \bigg\|_{X_v} \, \dd x \\
    &\leq |I_T| \prod_{v=0}^2 \|F_v\|_{\RS_{v}(T)}
   \end{align*}
   using \eqref{eqn:Rbd-tri}. 
 \end{proof}
 
 The key ingredient that we need is a size domination result for these sizes.
 This uses the deterministic size $S^\infty$ from Section \ref{sec:outermeasures}. 
 
 \begin{thm}\label{thm:size-domination-R}
   For all $u \in \{0,1,2\}$ and all $f \in \Sch(\WW;X_u)$, and all convex $\AA\subset \tPP$,
  \begin{equation*}
    \bigl\|\1_{\AA} \mc{E}_u[f]\bigr\|_{\RS_u} \lesssim \big\|\1_{\AA} \| \mc{E}[f] \|_{X_u^{3}}\big\|_{S^\infty},
  \end{equation*}
  with implicit constant independent of $\AA$.
\end{thm}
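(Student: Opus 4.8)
The plan is to mirror the proof of the size domination theorem of Section~\ref{sec:size-dom} (Theorem~\ref{thm:size-domination}), treating the three summands of $\RS_u=\sum_{v\in\{0,1,2\}}\RS_{u,v}$ separately. The two terms with $v\neq u$ are Rademacher tree-projection quantities of exactly the same shape as those controlled in Proposition~\ref{prop:lacunary-size-domination}, so for each of them one proceeds verbatim as there: fix a tree $T$, intersect $\AA$ with a finite convex truncation $T\cap\tPP_N$, use the convex projection lemma (Lemma~\ref{lem:convex-proj}) to produce $g\in\Sch(\WW;X_u)$ supported on $I_T$ with $\Emb_u[g](\mb P)=\Emb_u[f](\mb P)$ on $\AA\cap T\cap\tPP_N$ and $\|g\|_{L^\infty(I_T;X_u)}\lesssim\|\1_\AA\|\Emb[f]\|_{X_u^3}\|_{S^\infty(T)}$, insert the $\{0,1\}$-valued multipliers $\1_\AA(\mb P)$ via Kahane's contraction principle (Theorem~\ref{thm:contraction-principle}), apply the UMD tree-projection bound of Proposition~\ref{prop:lacunary-tree-proj}, and pass between the $L^2$- and $L^3$-averages by Kahane--Khintchine (Theorem~\ref{thm:kahane-khintchine}). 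Letting $N\to\infty$ yields $\|\1_\AA\Emb_u[f]\|_{\RS_{u,v}(T)}\lesssim\|\1_\AA\|\Emb[f]\|_{X_u^3}\|_{S^\infty(T)}$ for $v\neq u$. Hence the only genuinely new ingredient is the diagonal term $\RS_{u,u}$, where the $R$-bound $R_\Pi$ and the RMF property of $\Pi$ enter.

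For $\RS_{u,u}$, fix a tree $T$ and again reduce, via Lemma~\ref{lem:convex-proj} applied to the finite convex set $\AA\cap T\cap\tPP_N$, to a function $g\in\Sch(\WW;X_u)$ supported on $I_T$ with $\Emb_u[g]=\Emb_u[f]$ on $\AA\cap T\cap\tPP_N$ and $\|g\|_{L^\infty(I_T;X_u)}\lesssim\|\1_\AA\|\Emb[f]\|_{X_u^3}\|_{S^\infty(T)}$. The key observation, which is the same structural fact used in the proof of Proposition~\ref{prop:lacunary-tree-proj}, is that for every $\mb P\in T^u$ other than the top tritile one has $\xi_T\in\omega_{\mb P_u}$, so that $w_{\mb P_u}=c_{\mb P}\,\Mod_{\xi_T}\!\bigl(|I_{\mb P}|^{-1}\1_{I_{\mb P}}\bigr)$ for some unimodular $c_{\mb P}$, whence $\langle g;w_{\mb P_u}\rangle=c_{\mb P}\,\langle\Mod_{-\xi_T}g\rangle_{I_{\mb P}}$. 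Since the intervals $\{I_{\mb P}:\mb P\in T^u,\ x\in I_{\mb P}\}$ form a nested subfamily of the triadic intervals containing $x$, the corresponding averages of $\Mod_{-\xi_T}g$ form a subset of $\{\E[\Mod_{-\xi_T}g\,|\,\mc{F}_n](x):n\in\N\}$ for the triadic filtration $(\mc{F}_n)$ on $I_T$. Combining subadditivity, subset-monotonicity, and unimodular-scaling invariance of $R_\Pi$ (the last via the contraction principle) with the elementary bound $R_\Pi(\{y\})\lesssim\|y\|_{X_u}$ applied to the single top coefficient, one obtains, for almost every $x\in I_T$,
\[
R_\Pi\bigl(\{\1_\AA(\mb P)\langle g;w_{\mb P_u}\rangle\1_{I_{\mb P}}(x):\mb P\in T^u\}\bigr)\ \lesssim\ \mc{M}_\Pi(\Mod_{-\xi_T}g)(x)+\|g\|_{L^\infty(I_T;X_u)}.
\]

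It then remains to integrate and invoke RMF. Raising the last display to the third power, averaging over $I_T$, and using the $L^3$-boundedness of the Rademacher maximal operator $\mc{M}_\Pi$ — here the RMF property of $\Pi$ is used, together with $K$-convexity of the $X_u$ which makes $R_\Pi$ meaningful, and the fact that $\mc{M}_\Pi$ is independent of the underlying $\sigma$-finite measure space and filtration — one gets
\[
\Bigl(\fint_{I_T}R_\Pi(\cdots)^3\,\dd x\Bigr)^{1/3}\ \lesssim\ |I_T|^{-1/3}\|\Mod_{-\xi_T}g\|_{L^3(I_T;X_u)}+\|g\|_{L^\infty(I_T;X_u)}\ =\ |I_T|^{-1/3}\|g\|_{L^3(I_T;X_u)}+\|g\|_{L^\infty(I_T;X_u)}\ \lesssim\ \|g\|_{L^\infty(I_T;X_u)},
\]
using that modulation is a unimodular pointwise multiplier. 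Thus $\|\1_\AA\Emb_u[f]\|_{\RS_{u,u}(T)}\lesssim\|\1_\AA\|\Emb[f]\|_{X_u^3}\|_{S^\infty(T)}$; letting $N\to\infty$, summing the estimates for $\RS_{u,0},\RS_{u,1},\RS_{u,2}$, and taking the supremum over all trees $T$ gives the claimed bound, with implicit constant independent of $\AA$. The main obstacle is precisely the $\RS_{u,u}$ term: one must correctly recognise the $u$-overlapping wave-packet coefficients along a tree as unimodular multiples of triadic martingale averages of a single modulated function, so that $R_\Pi$ is dominated pointwise by $\mc{M}_\Pi$ and RMF applies, and one must perform the reduction to a finite subfamily (so that the $L^\infty$ output of Lemma~\ref{lem:convex-proj} is available) before dealing with the a priori unbounded $R_\Pi$-quantity; the $v\neq u$ terms are, by contrast, a routine repetition of Proposition~\ref{prop:lacunary-size-domination}.
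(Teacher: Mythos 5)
Your proposal is correct and follows essentially the same route as the paper: truncation to $\tPP_N$ plus Lemma \ref{lem:convex-proj}, the lacunary terms $\RS_{u,v}$ ($v\neq u$) handled exactly as in Proposition \ref{prop:lacunary-size-domination} with exponent $3$ in place of $2$, and the diagonal term $\RS_{u,u}$ treated by rewriting $\langle g;w_{\mb{P}_u}\rangle$ as a unimodular multiple of $\langle \Mod_{-\xi_T}g\rangle_{I_{\mb{P}}}$, dominating the $R$-bound by $\mc{M}_\Pi(\Mod_{-\xi_T}g)$ and invoking RMF together with $L^3\leq L^\infty$. Your slightly more careful handling of the top tritile and of inserting $\1_\AA$ via subset-monotonicity of $R$-bounds only tidies up details the paper glosses over.
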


Corresponding outer-Lebesgue estimates as in Corollary \ref{cor:outer-Lp-domination} follow immediately.


\begin{proof}[Proof of Theorem \ref{thm:size-domination-R}]
  As in the proof of Proposition \ref{prop:lacunary-size-domination}, it suffices to replace $\tPP$ with $\tPP_{N}=\{P \in \tPP : 3^{-N}<|I_{P}|<3^{N}\}$.
  Fix a tree $T$.
  Since $\AA\cap T \cap\tPP_{N} $ is finite and convex, by Lemma \ref{lem:convex-proj} there exists a function $g \in \Sch(\WW;X_u)$ supported on $I_T$ such  that
  \begin{align*}
      &\|g\|_{L^{\infty}(I_{T};X_u)} \lesssim \sup_{\substack{\mb{P} \in  T \cap\AA \cap \tPP_{N} \\ v \in \{0,1,2\}}}
      \|\langle f ; w_{\mb{P}_v}\rangle\|_{X_u}\\
      & \langle g; w_{\mb{P}_{v}}\rangle = \langle f; w_{\mb{P}_{v}}\rangle \qquad \forall P\in  T \cap \AA \cap \tPP_{N}, \;v\in\{0,1,2\}.
  \end{align*}
It suffices to bound each of the summands in
  \begin{equation*}
    \|\1_{\AA \cap \tPP_{N}} \mc{E}_u[f] \|_{\RS_u(T)} = \|\1_{\AA \cap \tPP_{N} } \mc{E}_u[f]\|_{\RS_{u,u}(T)} + \sum_{v \neq u} \|\1_{\AA \cap \tPP_{N}} \mc{E}_u[f]\|_{\RS_{u,v}(T)}
  \end{equation*}
  by $\|g\|_{L^{\infty}(I_{T};X_u)}$.
  The lacunary parts $\RS_{u,v}$ ($v \neq u$) are already handled in the proof of Proposition \ref{prop:lacunary-size-domination}, using the UMD property (one need only replace the exponent $2$ by the exponent $3$).
  It remains to treat the overlapping part, $\RS_{u,u}$.
  Since $\xi_{T}\in w_{\mb{P}_{u}}$, we have
  \begin{equation*}
  \exp_{(\xi_{\mb{P}_{u}}-\xi_{T})}(x-x_{\mb{P}})=1 \qquad \forall x\in I_{\mb{P}}  
  \end{equation*}
  and so
  \begin{equation*}
    w_{\mb{P}_{u}} = \exp_{(\xi_{\mb{P}_{u}}-\xi_{T})}(x_{\mb{P}}) \exp_{\xi_{T}}(x)|I_{\mb{P}}|^{-1}\1_{I_{\mb{P}}}.
  \end{equation*}
  Writing out the size and using the RMF property of $\Pi$ gives
  \begin{align*}
    &\|\1_{\AA \cap \tPP_{N}} \mc{E}_u[f] \|_{\RS_{u,u}(T)}^3 \\
    &\leq \fint_{I_T} R_\Pi \Big(\big\{\langle g; w_{\mb{P}_u} \rangle \1_{I_{\mb{P}}}(x) : \mb{P} \in T^u\cap\AA \cap \tPP_{N}\big\}\Big)^3 \, \dd x \\
    &= \fint_{I_T} R_\Pi \Big(\big\{\exp_{(\xi_{\mb{P}}-\xi_{T})}(x_{\mb{P}}) \langle \exp_{-\xi_T} g\rangle_{I_{\mb{P}}}   : \mb{P} \in T^u\cap \AA \cap\tPP_{N}, x \in I_{\mb{P}}\big\}\Big)^3 \, \dd x \\
    &\simeq \fint_{I_T} \mc{M}_{\Pi} (\exp_{-\xi_T} g)(x)^3 \, \dd x \\
    &\lesssim |I_T|^{-1} \|\exp_{-\xi_T} g\|_{L^3(I_T;X)}^3 \leq \|g\|_{L^{\infty}(I_{T};X)}
  \end{align*}
  where $\mc{M}_\Pi$ is the Rademacher maximal operator with respect to $\Pi$, and we used the contraction principle  to remove the unimodular coefficients $\exp_{(\xi_{\mb{P}}-\xi_{T})}(x_{\mb{P}})$.
\end{proof}

Having proven this size domination result, we are reduced to the situation of Section \ref{sec:energy-embedding}, where embedding bounds are proven with respect to the deterministic size $S^\infty$, which is the same in this formulation.
Thus we obtain an alternative version of our main theorem, with different sizes and an additional RMF assumption.

\begin{thm}\label{thm:intro-embeddings-R}
  Let $X_0$, $X_1$, and $X_2$ be UMD Banach spaces, such that each $X_u$ is $r_u$-Hilbertian for some $r_u \in [2,\infty)$.
  Let $\Pi \colon X_0 \times X_1 \times X_2 \to \CC$ be a bounded trilinear form with the RMF property.
  Then for all $u \in \{0,1,2\}$ and all convex sets $\AA \subset \tPP$ of tritiles, the following embedding bounds hold.
  \begin{itemize}
  \item For all $p \in (r_u,\infty)$,
    \begin{equation}\label{eqn:intro-emb-1-R} 
      \bigl\|\1_{\AA}\,\Emb_u[f] \bigr\|_{L_\mu^p \RS_u} \lesssim \|f \|_{L^p(\WW;X_u)} \qquad \forall f \in \Sch(\WW;X_u).
    \end{equation}
  \item For all $p \in (1,\infty)$ and all $q \in (\min(p,r_u)^\prime(r-1),\infty)$,
    \begin{equation}\label{eqn:intro-emb-2-R}
      \bigl\|\1_{\AA}\,\Emb_u[f]\bigr\|_{L_\nu^{p} \sL_\mu^q \RS_u} \lesssim \|f\|_{L^p(\WW;X_u)} \qquad \forall f \in \Sch(\WW;X).
    \end{equation}
  \end{itemize}
  The implicit constants in the above bounds do not depend on $\AA$.
\end{thm}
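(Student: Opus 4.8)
The plan is to mimic the proof of Theorem \ref{thm:intro-embeddings} essentially verbatim, with the size domination theorem \ref{thm:size-domination} replaced by its $R$-bound counterpart \ref{thm:size-domination-R} and the randomised sizes $\RS$ replaced by the sizes $\RS_u$ of Definition \ref{def:randomized-size-R}. The point of the appendix is precisely that, once the appropriate size domination is available, the remaining argument is insensitive to which flavour of randomised size one started from: the embeddings into the deterministic size $S^\infty$ carry all the remaining content, and those have already been proven in Section \ref{sec:energy-embedding} using only the $r_u$-Hilbertian hypothesis.

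First I would upgrade Theorem \ref{thm:size-domination-R} to an outer-$L^p$ statement, exactly as Corollary \ref{cor:outer-Lp-domination} is deduced from Theorem \ref{thm:size-domination}: for every convex $\AA \subset \tPP$, every $u \in \{0,1,2\}$, and all $p,q \in (1,\infty]$,
\begin{align*}
  \bigl\| \1_{\AA}\, \Emb_u[f] \bigr\|_{L^p_\mu \RS_u} &\lesssim \bigl\| \1_{\AA}\, \| \Emb[f] \|_{X_u^3} \bigr\|_{L^p_\mu S^\infty}, \\
  \bigl\| \1_{\AA}\, \Emb_u[f] \bigr\|_{L^p_\nu \sL^q_\mu \RS_u} &\lesssim \bigl\| \1_{\AA}\, \| \Emb[f] \|_{X_u^3} \bigr\|_{L^p_\nu \sL^q_\mu S^\infty}
\end{align*}
for all $f \in \Sch(\WW;X_u)$. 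To see this one assumes the right-hand side finite, peels off for each $n \in \Z$ a countable union of trees $E_n = \bigcup_i T_{n,i}$ with $\sum_n \mu(E_n) 2^{pn} \lesssim \|\1_{\AA} \|\Emb[f]\|_{X_u^3}\|_{L^p_\mu S^\infty}^p$ and $\|\1_{\AA \sm E_n} \|\Emb[f]\|_{X_u^3}\|_{S^\infty} \leq 2^n$, notes that $\tPP \sm E_n = \bigcap_i (\tPP \sm T_{n,i})$ is convex, applies Theorem \ref{thm:size-domination-R} on $\AA \sm E_n$ to get $\|\1_{\AA \sm E_n} \Emb_u[f]\|_{\RS_u} \lesssim 2^n$, and reassembles; the iterated bound uses in addition that strips and their complements are convex.

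Next I would invoke the deterministic-size embeddings from Section \ref{sec:energy-embedding}, which depend only on the $r_u$-Hilbertian (equivalently, Walsh tile-type $r_u$) property of $X_u$: Theorem \ref{thm:non-iterated} gives $\bigl\| \|\Emb[f]\|_{X_u^3} \bigr\|_{L^p_\mu S^\infty} \lesssim \|f\|_{L^p(\WW;X_u)}$ for $p \in (r_u,\infty)$, and Theorem \ref{theorem:iterated-embeddings} gives $\bigl\| \|\Emb[f]\|_{X_u^3} \bigr\|_{L^p_\nu \sL^q_\mu S^\infty} \lesssim \|f\|_{L^p(\WW;X_u)}$ for $p \in (1,\infty)$ and $q \in (\min(p,r_u)'(r_u-1),\infty]$. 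Chaining these with the displayed domination bounds, and using the unconditionality property of sizes (hence of the outer-$L^p$ and iterated outer-$L^p$ quasinorms) to insert the harmless cutoff $\1_{\AA}$ on the right-hand side, produces exactly \eqref{eqn:intro-emb-1-R} and \eqref{eqn:intro-emb-2-R}. Since none of Theorem \ref{thm:size-domination-R}, Theorem \ref{thm:non-iterated}, Theorem \ref{theorem:iterated-embeddings}, or Corollary \ref{cor:outer-Lp-domination} has an implicit constant depending on $\AA$, neither do the resulting bounds.

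All of the genuine work sits in Theorem \ref{thm:size-domination-R}, already established above: that is where the UMD property of the $X_u$, the RMF property of $\Pi$, and the projection Lemma \ref{lem:convex-proj} are used (via the overlapping component $\RS_{u,u}$, which is controlled by the Rademacher maximal operator $\mc{M}_\Pi$, and the lacunary components $\RS_{u,v}$, $v \neq u$, handled exactly as in Proposition \ref{prop:lacunary-size-domination}). Consequently the only thing to check carefully in the present proof is the first step — that the convexity bookkeeping in the deduction of the outer-$L^p$ domination goes through with no loss — and this is identical to the main-text argument. I expect no real obstacle; the appendix trades the technically heavier (but RMF-free) analysis of the defect operator for the technically lighter analysis of $R$-bounds at the cost of the extra RMF hypothesis on $\Pi$, and the bridge between the two is precisely the uniformity of the $S^\infty$ embeddings.
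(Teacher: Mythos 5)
Your proposal is correct and follows essentially the same route as the paper: the appendix deduces Theorem \ref{thm:intro-embeddings-R} precisely by combining the $R$-bound size domination (Theorem \ref{thm:size-domination-R}), upgraded to outer-$L^p$ domination via the convexity argument of Corollary \ref{cor:outer-Lp-domination}, with the deterministic $S^\infty$ embeddings of Theorems \ref{thm:non-iterated} and \ref{theorem:iterated-embeddings}, exactly as in the proof of Theorem \ref{thm:intro-embeddings}. Nothing further is needed.
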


As the sizes $\RS_u$ satisfy a H\"older inequality (Proposition \ref{prop:size-holder-R}), we obtain an alternative proof of our results on the tritile operator (Theorems \ref{thm:intro-tritile-boundedness} and \ref{thm:intro-sparse-bounds}) under the additional assumption that $\Pi$ has the RMF property.



\footnotesize
\bibliographystyle{amsplain}
\bibliography{main} 
\end{document}